\documentclass[a4paper,11pt]{article}
\usepackage{amsmath,amssymb,amsthm,siunitx}
\usepackage{cases}
\usepackage{a4wide}
\usepackage[pdftex]{graphicx}
\usepackage[hang,small,bf]{caption}
\usepackage[subrefformat=parens]{subcaption}
\usepackage{here}
\usepackage{comment}
\usepackage{ulem}
\usepackage[pdftex]{color}

\captionsetup{compatibility=false}

\newtheorem{prop}{Proposition}
\newtheorem{thm}{Theorem}
\newtheorem{lem}{Lemma}
\newtheorem{rem}{Remark}
\newtheorem{cor}{Corollary}

\newcommand{\Add}[1]{\textcolor{black}{#1}}
\newcommand{\Erase}[1]{\if0{#1}\fi}
\newcommand{\cpoint}[4]{\mathbf{#1}^{#2}_{#3,#4}}

\title{\textbf{A rapid numerical method for the Mullins-Sekerka flow with application to contact angle problems}}
\author{Tokuhiro Eto\footnote{Email-address:tokuhiro\_eto@yahoo.co.jp}}
\date{\today}
\begin{document}
\maketitle
\begin{abstract}
    The Mullins-Sekerka problem is numerically solved in $\mathbb{R}^2$ with the aid of the charge simulation method.
    This is an expansion of the numerical scheme by which Sakakibara and Yazaki computed the Hele-Shaw flow.
    We investigate a sufficient condition for the number of collocation points 
    to ensure that the length of the generated approximate polygonal curves gradually decreases.
    \Add{We propose a new benchmark function for the Mullins-Sekerka flow to confirm that the scheme works well.}
    Moreover, by changing the fundamental solutions of the charge simulation method, we are successful to establish a numerical scheme 
    that can be used to treat the Mullins-Sekerka problem with the contact angle condition.
\end{abstract}

\section{Introduction}\label{sec:intro}

The Mullins-Sekerka problem is a quasi-stationary Stefan problem with surface tension.
Its solution describes the time evolution of the interface separating two-phases filled with 
different materials. At a time $t$, one material is filled in a smooth bounded region $\Omega_t$ and another material is filled
in the outer region $\mathbb{R}^2\backslash\overline{\Omega_t}$. The boundary $\Gamma_t = \partial\Omega_t$ represents the interface.
The temperature of the material is denoted by $u = u(t,x)$ for $x\in\mathbb{R}^2\backslash\Gamma_t$ and $t\geq 0$.
Then, the Mullins-Sekerka problem asks us to find $(u,\Gamma_t)$ that satisfies
\begin{equation}\label{eq:Mullins_Sekerka_intro}
    \begin{cases}
        \Delta u = 0\ \ \mbox{in}\ \ \mathbb{R}^2\backslash\Gamma_t, \\
        u = \kappa\ \ \mbox{on}\ \ \Gamma_t, \\
        \nabla u(x) = O\left(\frac{1}{|x|^2}\right)\ \ \mbox{as}\ \ |x|\rightarrow\infty, \\
        V = -\left[\frac{\partial u}{\partial\mathbf n}\right]\ \ \mbox{on}\ \ \Gamma_t,
    \end{cases}
\end{equation}
where $\mathbf{n}$ denotes the normal vector to $\Gamma_t$ outgoing from $\Omega_t$, and $\kappa$ denotes the curvature of $\Gamma_t$.
The normal velocity of $\Gamma_t$ is denoted by $V$. More precisely, the motion of $\Gamma_t$ is governed by
\begin{equation}\label{eq:ContinuousEvolutionLaw}
  \frac{d}{dt}\mathbf{X}(t,s) = V(t,\mathbf{X}(t,s))\mathbf{n}(t,\mathbf{X}(t,s))\ \ (t\geq 0, s\in [0,2\pi]),
\end{equation}
where the interface $\Gamma_t$ is parameterized as $\mathbf{X}(t,s)$ for $s\in[0,2\pi]$.
$V$ describes the speed of $\Gamma_t$ in the direction to $\mathbf{n}$.
Let $\left[\varphi\right]$ be the jump in the normal direction of a quantity $\varphi$ across $\Gamma_t$; namely,
$\left[\varphi\right](x) := \lim_{\varepsilon\rightarrow 0}\{\varphi(x - \varepsilon\mathbf{n}) - \varphi(x + \varepsilon\mathbf{n})\}$ for each $x\in\Gamma_t$.\newline

The Mullins-Sekerka equation is a limit of a phase-field model, 
which is called the Cahn-Hilliard equation where two phases are separated by a transition layer instead of a \Erase{shape}\Add{sharpe} interface.
The Cahn-Hilliard equation is important to understand spinodal decomposition, which explains a phenomenon that
compounded solutes and solids are stable at high temperatures but become unstable at low temperatures and eventually separated by sharp interfaces.
The convergence of the Cahn-Hilliard equation to the Mullins-Sekerka equation was first formally shown by Pego \cite{Pego1989}. 
Stoth \cite{Stoth} gave a rigorous proof of the convergence in the case where the domain under consideration is a ball in $\mathbb{R}^3$, and the initial data and 
the boundary values are all radially symmetric. In a general dimension, Alikakos et al. \cite{AlikakosBatesChen1994} proved that a family of smooth solutions to the Cahn-Hilliard equation 
tends to a smooth solution to the Mullins-Sekerka equation provided that the latter exists.\newline

The Mullins-Sekerka problem has been well studied from an analytical point of view.
Chen et al. \cite{ChenHongYi1996} proved the existence of a classical solution to the Mullins-Sekerka problem local in time in the
two dimensional case, and Escher and Simonett \cite{EScherSimonett} proved it in general dimensional cases.
In the literature on weak solutions, Luckhaus and Sturzenhecker \cite{LuckhausSturzenhecker} proposed
a weak notion of the solution to the Mullins-Sekerka problem and gave its global time existence result whenever
the sum of the surface area measure does not change discontinuously over time. 
\Add{Under the same assumption, Bronsard et al. \cite{BronsardGarckeStoth1998} established a weak solution of a multi-phase Mullins-Sekerka problem
with a triple junction and showed its existence.} As indicated in our experiment, this setting cannot be applied
when multi particles exist and are very close to each other (see Section \ref{sec:CoP}).
R{\"o}ger \cite{Rger2005ExistenceOW} was successful to remove this assumption in terms of
geometric measure theory, \Add{although his result could be applied only to the two-phase case and excluded the multi-phase case and contact angle case.}
\Add{Recently, Hensel and Stinson \cite{HenselStinson202206} proposed a varifold solution to
the Mullins-Sekerka problem based on the energy dissipation property and included a fixed contact angle condition.
Julin et al. \cite{JulinEtal2022} revealed an asymptotic behaviour of the Mullins-Sekerka flow in the torus $\mathbb{T}^2\subset\mathbb{R}^2$.
They proved that a flat flow solution to the Mullins-Sekerka problem exponentially converges to the finite union of disks
whenever the perimeter of the initial data is less than $2$ (see Theorem 1.3 \cite{JulinEtal2022}).}\newline

There are several works that treated the Hele-Shaw problem or the Mullins-Sekerka problem numerically.
Though a typical Mullins-Sekerka problem is considered in a smooth bounded domain, 
Bates et al. \cite{Bates1995ANS} considered the problem $\eqref{eq:Mullins_Sekerka_intro}$ in $\mathbb{R}^2$
and translated the original problem $\eqref{eq:Mullins_Sekerka_intro}$ into a corresponding boundary integral equation.
Eventually, they did not have to care about the boundary of the container;
they split an initial curve into several segments and regarded as a part of circles.
In this way, they could calculate a discrete version of the curvatures and derive a linear system of equations 
whose unknown variables are normal velocities at each vertex by means of the Gibbs-Thomson law (this is the second condition of $\eqref{eq:Mullins_Sekerka_intro}$). 
Recall that our unknown variables in the linear system are coefficients of fundamental solutions in contrast to their scheme (see $\eqref{eq:internal_problem}$ and $\eqref{eq:external_problem}$).
Moreover, they adopted the semi-implicit method to stabilize their scheme with a small time step when solving the linear system.
For other studies using the boundary integral method, we refer the reader to \cite{ZhuChenHou}, \cite{Mayer2000}, and \cite{BatesBrown}.
Barrett et al. \cite{BarrettGarckeRobert} proposed a parametric finite element scheme for the Stefan problem with surface tension and 
proved the well-posedness and stability of their scheme. In the course of the discussion, they also revealed that
the scheme applies to the Mullins-Sekerka problem. Feng and Prohl \cite{FengProhl2005} showed that their numerical scheme, 
the so-called the fully discrete mixed finite element scheme to construct discrete solutions to the Cahn-Hilliard equation, tends to the solution of the 
Mullins-Sekerka equation, provided that a global-in-time classical solution exists.
\Add{Recently, N{\"u}rnberg \cite{Nurunberg202203} introduced a front tracking method for the Mullins-Sekerka flow which is based on 
the finite element method. He proved its unconditional stability and volume conservation law of the scheme. Its accuracy was confirmed in terms of two concentric circles.}\newline

The purpose of this paper is to propose a discrete scheme to solve the Mullins-Sekerka problem numerically,
revealing that our scheme has some desiable properties. To this end, we follow the scheme proposed by Sakakibara and Yazaki \cite{Sakakibara_Yazaki}.
Moreover, we confirm that the proposed scheme possesses curve-shortening property (CS) and area-preserving (AP) properties.
These facts are predictable because the original scheme for the Hele-Shaw problem also has such properties.
However, we focus on CS and derive a discrete variant of the estimation related to the length of the curve.
This outcome is reported in Corollary \ref{cor:time_step_optimization}.\newline

At this stage, we provide a brief explanation of our scheme.
Suppose that a smooth curve $\Gamma_t$ is given for some $t>0$.
Then, $\Gamma_t$ is approximated by an $N$ polygon $\Gamma_t^N$ where $N\geq 3$ is a positive integer. 
The interior domain of $\Gamma_t^N$ is designated as $\Omega_t^N$.
The vertices of $\Gamma_t^N$ are denoted by $\mathbf{X}_i(1\leq i \leq N)$.
For convenience, we adopt a periodic rule for the indices of $\mathbf{X}_i$, such as $\mathbf{X}_0 = \mathbf{X}_N$ and $\mathbf{X}_{N+1} = \mathbf{X}_1$.
Each edge $[\mathbf{X}_{i-1},\mathbf{X}_i]$ is expected to possess a discrete version of the curvature
$\kappa_i = \kappa_i(t)(1\leq i \leq N)$. In addition, normal vectors $\mathbf{N}_i=\mathbf{N}_i(t)$ and tangential vectors $\mathbf{T}_i = \mathbf{T}_i(t)$
at $\mathbf{X}_i$ are suitably defined.
Among the standard numerical methods, we adopt the charge simulation method (CSM)
that was originally developed to approximate a solution to the Laplace equation in a bounded domain with Dirichlet boundary conditions.
CSM is a variant of the method of fundamental solutions (MFS), in which 
approximate solutions are expressed as a linear combination of fundamental solutions to partial differential equations under consideration.
CSM requires choosing proper charge points $y_1^+,\cdots,y_N^+$ from $\mathbb{R}^2\backslash \overline{\Omega^N_t}$ and
collocation points $\mathbf{X}_1,\cdots,\mathbf{X}_N$ on $\Gamma_t$ and expresses an approximate solution as follows:
\begin{equation}\label{eq:combination}
  U^+(x) = \sum_{i=1}^N Q_i^+E(x - y_i^+)
\end{equation}
where $E$ is the fundamental solution of the Laplace equation, that is $E(x):= \frac{1}{2\pi}\log{|x|}$ for $x\in\mathbb{R}^2\backslash\{0\}$.
Since $y_i^+$ s are outside $\Omega^N$, $U^+$ is harmonic at all points in $\Omega^N$.
The coefficients $Q_i$ s should be determined by Dirichlet boundary conditions
$U(\mathbf{X}_i) = \kappa(\mathbf{X}_i)$ for $1\leq i \leq N$.
This is the basic idea of CSM. For more detail about CMS, see Katsurada and Okamoto \cite{KatsuradaOkamoto}.
The above conventional scheme was modified by Murota \cite{MurotaKazuo} to make the scheme possess 
invariance properties that original continuous problems also have. Concretely, he alternatively used
the following combination.
\begin{equation*}
  U^+(x) = Q_0^+ + \sum_{i=1}^N Q_i^+E(x-y_i^+).
\end{equation*}
In this scheme, we have to add an equality to a linear system because we have one more value $Q^+_0$ to find.
For instance, Murota assumed that the sum of $Q_i^+$ s equals zero.
In solving the Hele-Shaw problem numerically, Sakakibara and Yazaki \cite{Sakakibara_Yazaki} improved Murota's invariant scheme to 
make this additional assumption more natural. They defined dummy singular points $z_i^+(1\leq i\leq N)$
and replaced the combination of fundamental solutions by
\begin{equation}\label{eq:internal_problem_intro}
  U^+(x) = Q_0^+ + \sum_{i=1}^N Q_i^+\{E(x-y_i^+)-E(x-z_i^+)\}.
\end{equation}
It can be observed that the above function $U^+$ is invariant 
under translation and scaling without any additional assumptions.
Hence, it is possible to impose an area-preserving requirement that seems more natural
than the zero-average assumption.
Taking singular points $y_i^-$ and $z_i^-$ from $\Omega^N_t$, we also have a function $U^-$ being harmonic in $\mathbb{R}^2\backslash\overline{\Omega^N_t}$
that satisfies the Dirichlet boundary condition.
We should solve an external potential problem to find such a $U^-$. However, it is impossible by either the finite difference method or the finite element method 
owing to the unboundedness of the domain where the problem is considered.
As imposed in the fourth equality of $\eqref{eq:Mullins_Sekerka_intro}$, each point $x$ on $\Gamma^N_t$ is required to move at the speed that is equal to the jump of the normal derivatives of $U^+$ and $U^-$ across $\Gamma^N_t$.
Once we obtain such $U^+$ and $U^-$, the direct differentiation of $U^+$ and $U^-$ yields the representative normal velocity $V_i(t) = V_i^+(t) + V_i^-(t)$ at $\mathbf{X}_i$.
Consequently, $\mathbf{X}_i$ should fulfill the evolution equation
\begin{equation}\label{eq:evolution_equation}
  \frac{d}{dt}{\mathbf{X}_i(t)} = V_i(t)\mathbf{N}_i(t) + W_i(t)\mathbf{T}_i(t)\ \ \mbox{for}\ \ 1\leq i\leq N,\ t > 0.
\end{equation}
Tangential velocity $W_i$ and its vector $\mathbf{T}_i$ are required to stabilize the scheme, although
they have no effect on the geometry of the curve (see Proposition 2.4 \cite{Epstein} for instance).
Normal velocity $V_i$ and its vector $\mathbf{N}_i$ definitely control the motion of a curve.
Finally, we discretize the time variable as $t = n\Delta t (0\leq n\leq N)$ and rearrange the evolution equation $\eqref{eq:evolution_equation}$ as follows:
\begin{equation}\label{eq:evolution_equation_discrete}
  \mathbf{X}^{n+1}_i = \mathbf{X}^n_i + \Delta t(V_i(t_n)\mathbf{N}_i(t_n) + W_i(t_n)\mathbf{T}_i(t_n))\ \ \mbox{for}\ \ 1\leq i\leq N,\ n = 0,1,\cdots.
\end{equation}

A particular novelty of this study is treating a boundary contact case of the Mullins-Sekerka problem in the half plane $\mathbb{R}_+^2$.
To this end, we replace the fundamental solutions of the combination by the Green function on $\mathbb{R}_+^2$.
Since the curves under consideration are open, we modify the structure of the proposed scheme.
Well-posedness of the Mullins-Sekerka problem with ninety contact angle condition was established by Abels et al. \cite{AbelsMaxWilke}.
\Add{After that, Garcke and Rauchecker \cite{GarckeRauchecker2022} show stability and instability results of stationary solutions to the linearized problem that is either flat or a part of a circle.}\newline

The reminder of this paper is organized as follows. In Section \ref{sec:NumeticalScheme}, we rigorously state how to implement the proposed scheme.
Section \ref{sec:PropertiesOfTheScheme} is devoted to list our main results without the proofs. In Section \ref{sec:NumericalExamples}, we give several examples of implementation of the scheme. 
Moreover, the accuracy of our scheme is confirmed in terms of an annulus-like domain \Add{which consists of three concentric circles} \Erase{as an initial data.} 
\Add{and a continuous function being harmonic except on the circles.}
\Add{To our best knowledge, this is a new feature in the literature of the Mullins-Sekerka problem as a benckmark function
that can describe the two-phase motion.} We shall extend the scheme to the boundary contact cases in Section \ref{sec:ExtensionToBoundaryContactCases}.
In Section \ref{sec:SequenceOfTheProofs}, we collect all proofs of Theorems and Propositions whose justification has been postponed.

\section{Numerical scheme}\label{sec:NumeticalScheme}

In this section, we present a concrete procedure to construct approximate polygons.
This is a natural extension of the scheme proposed in \cite{Sakakibara_Yazaki}.

\subsection{Polygonal approximation of the interface}
Let $N\geq 3$ be a positive number, and $\Omega^N$ be an $N$ polygonal domain in $\mathbb{R}^2$ with vertices $\mathbf{X}_1,\cdots,\mathbf{X}_N\in\mathbb{R}^2$.
Set $\mathbf{X}_0 := \mathbf{X}_N, \mathbf{X}_{N+1} := \mathbf{X}_1$. This periodic rule is always applied unless otherwise stated explicitly. 
The symbol $[\mathbf{X}_{i-1},\mathbf{X}_i]$ denotes the line segment connecting $\mathbf{X}_{i-1}$ and $\mathbf{X}_i$.
This is called the edge in the sequel. Then, the boundary $\Gamma^N := \partial\Omega^N$ readily designs an $N$ polygon 
and is expressed as $\Gamma^N = \cup_{i=1}^N[\mathbf{X}_{i-1},\mathbf{X}_i]$.
For each $1\leq i\leq N$, we define 
\begin{equation}\label{eq:rtn}
r_i:=|\mathbf{X}_i - \mathbf{X}_{i-1}|,\ \mathbf{t}_i := \frac{\mathbf{X}_i - \mathbf{X}_{i-1}}{r_i},\ \mathbf{n}_i := \mathbf{t}_i^\perp.
\end{equation}
Here we have \Erase{use}\Add{used} the notation $(a,b)^\perp := (b, -a)$.
Moreover, the midpoint of the edge $[\mathbf{X}_{i-1}, \mathbf{X}_i]$ is denoted by $\mathbf{X}_i^*$, namely $\mathbf{X}_i^* := \frac{\mathbf{X}_{i-1} + \mathbf{X}_i}{2}$.
The outer angle $\varphi_i$ of $\Gamma^N$ at each vertex $\mathbf{X}_i$ is expressed as follows:
\begin{equation*}
    \varphi_i := \operatorname{sgn}{(\mathbf{t}_i, \mathbf{t}_{i+1})} \arccos{(\mathbf{t}_i\cdot\mathbf{t}_{i+1})}.
\end{equation*}
where the function $\operatorname{sgn}$ is defined by
\begin{equation*}
    \operatorname{sgn}{(\mathbf{a},\mathbf{b})} :=
    \begin{cases}
        1\ \ \mbox{if}\ \ \mathbf{a}\cdot\mathbf{b}^\perp > 0,\\
        0\ \ \mbox{if}\ \ \mathbf{a}\cdot\mathbf{b}^\perp = 0,\\
        -1\ \ \mbox{otherwise},
    \end{cases}
\end{equation*}
for each $\mathbf{a},\mathbf{b}\in\mathbb{R}^2$.
By using $\varphi_i$, we set
\begin{equation*}
    \cos_i := \cos{\left(\frac{\varphi_i}{2}\right)},\ \sin_i := \sin{\left(\frac{\varphi_i}{2}\right)},\ \tan_i := \frac{\sin_i}{\cos_i}.
\end{equation*}
Then, we define the discrete curvature of $\Gamma^N$ at $\mathbf{X}_i^*$ as follows:
\begin{equation}\label{eq:DiscreteCurvature}
    \kappa_i := \frac{\tan_i + \tan_{i-1}}{r_i}\ \ \mbox{for}\ \ i = 1,\cdots,N.
\end{equation}
Moreover, we define the normal vectors $\mathbf{N}_i$ and the tangential vectors $\mathbf{T}_i$ at $\mathbf{X}_i$
in terms of the normal vectors $\mathbf{n}_i$ and the tangential vectors $\mathbf{t}_i$ of the edge $[\mathbf{X}_{i-1},\mathbf{X}_i]$ as follows:
\begin{equation*}
    \mathbf{N}_i := \frac{\mathbf{n}_i + \mathbf{n}_{i+1}}{2\cos_i},\ \mathbf{T}_i := \frac{\mathbf{t}_i + \mathbf{t}_{i+1}}{2\cos_i}\ \ \mbox{for}\ \ i = 1,\cdots,N.
\end{equation*}
See the figure below to take a look at our setting.

\begin{figure}[H]
    \centering
    \includegraphics[keepaspectratio, scale=0.5]{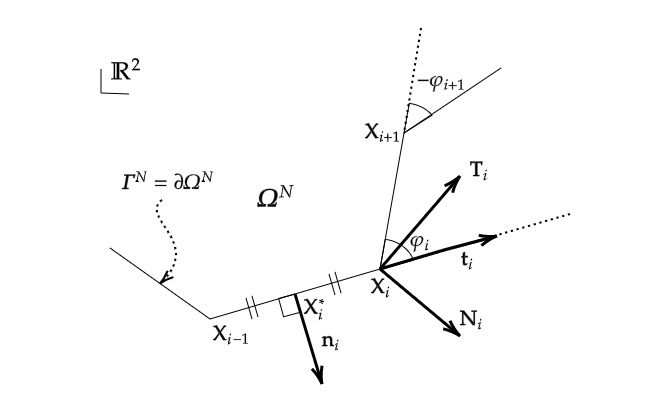}
    \caption{An $N$ polygon mimicking a curve.}\label{fig:N_polygon}
\end{figure}

\subsection{Approximation of the normal velocity}
  \begin{description}
    \item[Solve the internal problem.] 
    In this step, we solve the Laplace equation with a Dirichlet boundary condition given by
    $\kappa_i$.
    To this end, we must choose singular points $y_1^+,\cdots,y_N^+$ and dummy singular points $z_1^+,\cdots,z_N^+$ from $\mathbb{R}^2\backslash\overline{\Omega\Add{^N}}$.
    \Erase{For small $d>0$, w}\Add{W}e set
    \begin{equation*}
        y_i^+ := \mathbf{X}_i^* + d\mathbf{n}_i,\ z_i^+ := \mathbf{X}_i^* + \frac{d}{2}\mathbf{n}_i
    \end{equation*}

    \Add{where $d := \frac{1}{\sqrt{N}}$.}
    Then, suppose that the solution of the Dirichlet boundary problem is of the form 
    \begin{equation}\label{eq:def_of_U}
        U^+(x) = Q^+_0 + \sum_{i=1}^NQ_i^+\{E(x - y^+_i) - E(x-z^+_i)\}
    \end{equation}
    for some $Q_0^+,Q_1^+,\cdots,Q_N^+\in\mathbb{R}$. $U^+$ is clearly harmonic in $\Omega^N$ due to its structure. 
    To determine these values, we shall solve the following linear system of equations:
    \begin{equation}\label{eq:internal_problem}
        \begin{cases}
            Q_0^+ + \sum_{j=1}^NQ_j^+\mathbb{G}_{i,j} = \kappa_i\ \ \mbox{for}\ \ i = 1,\cdots,N, \\
            \sum_{j=1}^NQ_j^+H_j = 0.
        \end{cases}
    \end{equation}
    where 
    \begin{eqnarray*}
        \mathbb{G}_{i,j} &:=& E(\mathbf{X}_i^* - y_j) - E(\mathbf{X}_i^* - z_j), \\
        H_j &:=& - \sum_{i=1}^N\mathbf{H}_{i,j}\cdot \mathbf{n}_ir_i, \\
        \mathbf{H}_{i,j} &:=& \nabla E(\mathbf{X}_i^* - y^+_j) - \nabla E(\mathbf{X}_i^* - z^+_j).
    \end{eqnarray*}

    \item[Solve the external problem.] As in the previous step, 
    we take singular points $y^-_1,\cdots,y^-_N$ and dummy singular points $z^-_1,\cdots,z^-_N$ in $\Omega^N$ as follows:
    \begin{equation*}
        {y}_j^- := \mathbf{X}_i^* - d\mathbf{n}_i,\ {z}_j^- := \mathbf{X}_i^* - \frac{d}{2}\mathbf{n}_i.
    \end{equation*}
    Then, the solution to the exterior Dirichlet boundary problem is expressed as
    \begin{equation}\label{eq:def_of_U_bar}
        {U}^-(x) = {Q}^-_0 + \sum_{i=1}^N{Q}^-_i\{E(x - y_i^-) - E(x-z_i^-)\}
    \end{equation}
    where $Q_0^-, Q_1^-,\cdots,Q_N^-$ are selected to fulfill
    \begin{equation}
        \begin{cases}
            Q_0^- + \sum_{j=1}^N Q_j^-\mathbb{G}_{i,j} = \kappa_i\ \ \mbox{for}\ \ i = 1,\cdots,N, \\
            -\sum_{j=1}^NQ_j^-H_j = 0. \label{eq:external_problem}
        \end{cases}
    \end{equation}

    \item[Derive the normal velocity.] Once $Q^+_i$ and $Q^-_i$ are determined, we can define a representative normal velocity at $\mathbf{X}_i$.
    By setting $v_i^+ := -\nabla U^+(\mathbf{X}_i^*)\cdot\Add{\mathbf{n}_i}$ and $v_i^- := -\nabla U^-(\mathbf{X}_i^*)\cdot(-\mathbf{n}_i)$, we define the representative normal velocities by
    \begin{equation}\label{eq:normal_velocity}
        V_i^\pm := \frac{v_i^\pm + v_{i+1}^\pm}{2\cos_i}\ \ \mbox{for}\ \ i = 1,\cdots,N.
    \end{equation}

    \item[Derive the tangential velocity.] To reduce instability of the scheme, we consider the tangential vector
    that does not affect the shape of the polygon. To this end, we adopt a uniformly distribute method (UDM).
    Let us only state formulae to obtain the tangential velocities. We refer the readers to Sakakibara and Yazaki \cite{Sakakibara_Yazaki} for derivation of the scheme.

    Tangential velocities $W_i(1\leq i\leq N)$ are defined as follows:
    \begin{equation}\label{eq:W_i_explicit_formula}
        W_i := \frac{\Psi_i + C}{\cos_i}\ \ \mbox{for}\ \ 1\leq i\leq N,\ \ C := -\frac{\sum_{i=1}^N\frac{\Psi_i}{\cos_i}}{\sum_{i=1}^N\frac{1}{\cos_i}}.
    \end{equation}
    Here we have set $\Psi_i := \sum_{j=1}^i\psi_j$ for each $1\leq i\leq N$ where
    $\psi_1 := 0$ and 
    \begin{equation*}
        \psi_i := \frac{1}{N}\sum_{i=1}^N\kappa_i(v^+_i + v^-_i)r_i - (V^+_i+V^-_i)\sin_i - (V^+_{i-1}+V^-_{i-1})\sin_{i-1} + \left(\frac{L}{N} - r_i\right)\omega
    \end{equation*}
    for $2\leq i \leq N$ with $L := \sum_{i=1}^N r_i$ and $\omega := 10N$.
  \end{description}
\subsection{Time evolution of the polygonal interface}
    We are now in the position to state our numerical scheme.
    Given an initial curve $\Gamma_0 = \partial\Omega_0$, we approximate it by an $N$ polygonal curve
    \begin{equation*}
      \Gamma^N_0 = \bigcup_{i=1}^N[\mathbf{X}_{i-1}^{(0)},\mathbf{X}_i^{(0)}]
    \end{equation*}
    where $\mathbf{X}^{(0)}_i(1\leq i\leq N)$ denote vertices of $\Gamma^N_0$.
    The domain surrounded by $\Gamma^N_0$ is denoted by $\Omega^N_0$.
    For $t>0$, suppose that an $N$ polygon $\Gamma^N_t$ is an approximation of $\Gamma_t$ to be found.
    The vertices of $\Gamma^N_t$ are denoted by $\mathbf{X}_i = \mathbf{X}_i(t)(1\leq i\leq N)$.
    Quantities $\kappa_i = \kappa_i(t), \mathbf{N}_i=\mathbf{N}_i(t), \mathbf{T}_i = \mathbf{T}_i(t), V^+_i=V^+_i(t), V^-_i = V^-_i(t), V_i = V_i(t):=V^+_i+V^-_i$  and $W_i=W_i(t)$
    are defined as above. Following the continuous version of the evolution law, we consider
    \begin{equation}\label{eq:DiscreteEvolutionLaw}
      \frac{d}{dt}\mathbf{X}_i(t) = V_i(t)\mathbf{N}_i(t) + W_i(t)\mathbf{T}_i(t)\ \ \mbox{for}\ \ i=1,\cdots,N,\ t>0
    \end{equation}
    with the initial condition $\mathbf{X}_i(0) = \mathbf{X}_i^{(0)}(1\leq i \leq N)$.
    Finally, the time variable is discretized as $t_n = n\Delta t(n = 0,1,\cdots)$ 
    with a given time step $\Delta t>0$ and the forward Euler method is applied to $\eqref{eq:DiscreteEvolutionLaw}$.
    Using the notations $\mathbf{X}^n_i = \mathbf{X}_i(t_n),V_i^n=V_i(t_n),\mathbf{N}_i^n=\mathbf{N}_i(t_n),W_i^n=W_i(t_n)$ and $\mathbf{T}_i^n=\mathbf{T}_i(t_n)$, we provide the fully discrete scheme below.

    \begin{equation}\label{eq:FullyDiscreteScheme}
      \mathbf{X}^{n+1}_i = \mathbf{X}^n_i + \Delta t(V_i^n\mathbf{N}_i^n + W_i^n\mathbf{T}_i^n)\ \ \mbox{for}\ \ i = 1,\cdots,N,\ n=0,1,\cdots
    \end{equation}
    with the initial condition $\mathbf{X}^0_i = \mathbf{X}^{(0)}_i(1\leq i \leq N)$.
    An $N$ polygonal curve that consists of the vertices $\mathbf{X}^n_i(1\leq i \leq N)$ solving $\eqref{eq:FullyDiscreteScheme}$ 
    is denoted by $\Gamma^{N,n}_{\Delta t}$. The interior domain of $\Gamma^{N,n}_{\Delta t}$ is denoted by $\Omega^{N,n}_{\Delta t}$.

\section{Properties of the scheme}\label{sec:PropertiesOfTheScheme}
As mentioned in Introduction, the proposed numerical scheme is expected to have the area-preserving and
the curve-shortening properties since the original continuous problem has such properties.
Here, we assume that $\Gamma_t$ is a simply closed curve that separates $\mathbb{R}^2$ into a
bounded domain $\Omega_t$ and an unbounded domain $\mathbb{R}^2\backslash\overline{\Omega_t}$.
Suppose that $(u,\Gamma_t)$ is a solution to the Mullins-Sekerka problem $\eqref{eq:Mullins_Sekerka_intro}$.
Let us distinguish $u$ inside $\Omega_t$ by writing $u^i$, whereas $u$ outside $\Omega_t$ is presented as $u^e$.
Then, the area $\mathfrak{A}_t$ of $\Omega_t$ never changes. Indeed, we compute
\begin{eqnarray*}
    \frac{d}{dt}\mathfrak{A}_t &=& \int_{\Gamma_t}VdS = -\int_{\Gamma_t}\left[\frac{\partial u}{\partial \mathbf{n}}\right]dS = \int_{\Gamma_t}\left(\frac{\partial u^e}{\partial \mathbf{n}} - \frac{\partial u^i}{\partial \mathbf{n}}\right)dS \\
    &=& -\int_{B(0,R)\backslash\overline{\Omega_t}}\Delta u^edx+ \int_{\partial B(0,R)}\frac{\partial u^e}{\partial\mathbf{n}}dS - \int_{\Omega_t}\Delta u^i dx \\
    &=& 2\pi R\cdot O\left(\frac{1}{R^2}\right)\ \ \mbox{as}\ \ R\rightarrow\infty.
\end{eqnarray*}

Here, $R>0$ is arbitrarily taken so large that $\Omega_t\subset\subset B(0,R)$. Thus, $\frac{d}{dt}\mathfrak{A}_t = 0$.
Moreover, we see that the length $\mathfrak{L}_t$ of $\Gamma_t$ does not increase in time. Indeed,
\begin{eqnarray*}
    \frac{d}{dt}\mathfrak{L}_t &=& \int_{\Gamma_t}\kappa VdS = -\int_{\Gamma_t}u\left[\frac{\partial u}{\partial \mathbf{n}}\right]dS = \int_{\Gamma_t}\left(u\frac{\partial u^e}{\partial \mathbf{n}} - u\frac{\partial u^i}{\partial \mathbf{n}}\right)dS \\
    &=& -\int_{B(0,R)}|\nabla u|^2dx + \int_{\partial B(0,R)}u\frac{\partial u^e}{\partial \mathbf{n}} dS -\int_{B(0,R)\backslash\overline{\Omega}_t}u^e\Delta u^edx -\int_{\Omega_t}u^i\Delta u^idx.
\end{eqnarray*}

Since $\nabla u = O\left(\frac{1}{|x|^2}\right)$ as $|x|\rightarrow\infty$, $u$ is bounded in $\mathbb{R}^2$ (see Lemma \ref{lem:boundedness_nabla_1/x**2}).
Therefore, by letting $R\rightarrow\infty$, we see that the right-hand side of the above identity is not positive. In this section,
we will observe that these fine properties are valid even in the discrete version of the solutions derived by MFS.
Let us prepare several useful formulae to proceed argument.
All proofs of the following results are postponed and stated in Section \ref{sec:SequenceOfTheProofs}.\newline

Let $L = L(t)$ and $A = A(t)$ be the length and the area of a polygon evolving in time, subject to $\eqref{eq:DiscreteEvolutionLaw}$, respectively.
Namely,
\begin{equation}\label{eq:ContinuousLength}
  L := \sum_{i = 1}^N |\mathbf{X}_i - \mathbf{X}_{i-1}|,
\end{equation}
\begin{equation}\label{eq:ContinuousArea}
  A := \frac{1}{2}\sum_{i=1}^N \mathbf{X}_{i-1}^\perp\cdot\mathbf{X}_i.
\end{equation}
The following formulae are used to derive the tangential velocities of each vertex of the polygon.

\begin{prop}\label{prop:1}
    Let $L$ and $A$ be defined by $\eqref{eq:ContinuousLength}$ and $\eqref{eq:ContinuousArea}$, respectively.
     Then, the following formulae are valid:
    \begin{eqnarray}
        \dot{L} &=& \sum_{i=1}^N\kappa_i(v^+_i + v^-_i)r_i,\label{eq:prop1-1} \\
        \dot{A} &=& \sum_{i=1}^N(v^+_i + v^-_i)r_i + \operatorname{err}A\label{eq:prop1-2}
    \end{eqnarray}
    where
    \begin{equation*}
        \operatorname{err}A := \sum_{i=1}^N \left(W_i\sin_i - \frac{v^+_{i+1}-v^+_i}{2} - \frac{v^-_{i+1}-v^-_i}{2}\right)\frac{r_{i+1}-r_i}{2}.
    \end{equation*}
\end{prop}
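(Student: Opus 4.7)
The plan is to compute $\dot L$ and $\dot A$ directly by differentiating $L=\sum_i r_i$ and $A=\tfrac{1}{2}\sum_i \mathbf{X}_{i-1}^\perp\cdot\mathbf{X}_i$ along the evolution law $\dot{\mathbf{X}}_i=V_i\mathbf{N}_i+W_i\mathbf{T}_i$, and then to reorganize the resulting sums by cyclic reindexing. The key auxiliary computations are the eight elementary inner products of $\{\mathbf{t}_i,\mathbf{n}_i\}$ with $\{\mathbf{N}_i,\mathbf{T}_i,\mathbf{N}_{i-1},\mathbf{T}_{i-1}\}$, all of which follow from the definitions in Section \ref{sec:NumeticalScheme} together with the half-angle identities $\sin\varphi_i=2\sin_i\cos_i$ and $1+\cos\varphi_i=2\cos_i^2$. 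Throughout, I write $v_i:=v_i^++v_i^-$, so that $V_i=(v_i+v_{i+1})/(2\cos_i)$.

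For \eqref{eq:prop1-1}, differentiating $r_i^2=|\mathbf{X}_i-\mathbf{X}_{i-1}|^2$ gives $\dot r_i=\mathbf{t}_i\cdot(\dot{\mathbf{X}}_i-\dot{\mathbf{X}}_{i-1})$. Using $\mathbf{t}_i\cdot\mathbf{N}_i=\sin_i$, $\mathbf{t}_i\cdot\mathbf{T}_i=\cos_i$, $\mathbf{t}_i\cdot\mathbf{N}_{i-1}=-\sin_{i-1}$, and $\mathbf{t}_i\cdot\mathbf{T}_{i-1}=\cos_{i-1}$, one obtains
\[
\dot r_i=V_i\sin_i+V_{i-1}\sin_{i-1}+W_i\cos_i-W_{i-1}\cos_{i-1}.
\]
Summing over $i$ telescopes the $W$ terms to zero and merges the two $V\sin$ sums into $2\sum_i V_i\sin_i$, so $\dot L=\sum_i(v_i+v_{i+1})\tan_i$. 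A single cyclic reindex of the $v_{i+1}\tan_i$ piece then gives $\dot L=\sum_i v_i(\tan_i+\tan_{i-1})=\sum_i\kappa_i v_i r_i$ by \eqref{eq:DiscreteCurvature}.

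For \eqref{eq:prop1-2}, differentiating the shoelace expression and reindexing the first of the two resulting sums (so that the time derivative always lands on $\dot{\mathbf{X}}_i$) yields $\dot A=\tfrac{1}{2}\sum_i \dot{\mathbf{X}}_i\cdot(\mathbf{X}_{i-1}-\mathbf{X}_{i+1})^\perp$. Since $\mathbf{X}_{i+1}-\mathbf{X}_{i-1}=r_i\mathbf{t}_i+r_{i+1}\mathbf{t}_{i+1}$ and $\mathbf{t}_j^\perp=\mathbf{n}_j$, this reduces to a linear combination of $\dot{\mathbf{X}}_i\cdot\mathbf{n}_i$ and $\dot{\mathbf{X}}_i\cdot\mathbf{n}_{i+1}$. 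Inserting $\mathbf{N}_i\cdot\mathbf{n}_i=\mathbf{N}_i\cdot\mathbf{n}_{i+1}=\cos_i$, $\mathbf{T}_i\cdot\mathbf{n}_i=-\sin_i$, $\mathbf{T}_i\cdot\mathbf{n}_{i+1}=\sin_i$ collapses the expression into a combination of $V_i\cos_i(r_i+r_{i+1})$ and $W_i\sin_i(r_{i+1}-r_i)$. The algebraic identity
\[
(v_i+v_{i+1})(r_i+r_{i+1})=2v_ir_i+2v_{i+1}r_{i+1}-(v_{i+1}-v_i)(r_{i+1}-r_i),
\]
summed over $i$, reduces the $V\cos$ contribution to the main term $\sum_i v_i r_i$ plus a cross term involving $(v_{i+1}-v_i)(r_{i+1}-r_i)$, and this cross term combines with the $W\sin$ contribution to form exactly $\operatorname{err}A$ in the statement.

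The principal obstacle is bookkeeping rather than genuine analysis: the proposition chains together several layered averaging definitions ($V_i$ averages $v_i$ and $v_{i+1}$, $\mathbf{N}_i$ averages $\mathbf{n}_i$ and $\mathbf{n}_{i+1}$, $\kappa_i$ averages $\tan_i$ and $\tan_{i-1}$), and every manipulation depends on consistent use of the periodicity $\mathbf{X}_0=\mathbf{X}_N$, $\mathbf{X}_{N+1}=\mathbf{X}_1$ in the cyclic reindexings. I would record the eight elementary inner products and the algebraic identity above as preliminary displays, after which the rearrangements proceed entirely mechanically.
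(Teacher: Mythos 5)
Your proposal is correct and follows essentially the same route as the paper: both differentiate $r_i$ via $\dot r_i=\mathbf{t}_i\cdot(\dot{\mathbf{X}}_i-\dot{\mathbf{X}}_{i-1})$, use the decompositions of $\mathbf{N}_i,\mathbf{T}_i$ in the $\{\mathbf{t}_i,\mathbf{n}_i\}$ and $\{\mathbf{t}_{i+1},\mathbf{n}_{i+1}\}$ frames (the paper's Lemma \ref{lem:T_i_and_N_i}), telescope and cyclically reindex, and split $(v_i+v_{i+1})(r_i+r_{i+1})$ into the main term plus the cross term that, together with $W_i\sin_i\frac{r_{i+1}-r_i}{2}$, constitutes $\operatorname{err}A$. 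The only cosmetic difference is that you derive the identity $\dot A=\sum_i\frac{r_i\mathbf{n}_i+r_{i+1}\mathbf{n}_{i+1}}{2}\cdot\dot{\mathbf{X}}_i$ from the shoelace formula, which the paper simply asserts.
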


\begin{prop}[Uniform boundedness of charges]\label{thm:uniform_boundedness_charges}
  Set
  \begin{equation*}
    \mathbb{A}_N := \begin{pmatrix}
      0 & \mathbf{H}^T \\
      \mathbf{1} & \mathbb{G}
    \end{pmatrix}, \mathbf{Q}_N := \begin{pmatrix}
      Q_0 \\
      Q_1 \\
      \vdots \\
      Q_N
    \end{pmatrix},
      \mathbf{K}_N := \begin{pmatrix}
      0 \\
      \kappa_1 \\
      \vdots \\
      \kappa_N
    \end{pmatrix}.
  \end{equation*}
  where $\mathbf{H} := (H_1,\cdots,H_N)^T$ and $\mathbb{G}:=(G_{i,j})$.
  Then, the values $\sup_{N\in\mathbb{N}}{\frac{1}{N}\|\mathbb{A}_N^{-1}\|_1}$, $\sup_{N\in\mathbb{N}}{\frac{1}{N}\|\mathbf{Q}_N\|_1}$
  ,$\sup_{N\in\mathbb{N}}{\frac{1}{N}\|\mathbf{K}_N\|_1}$ are finite whenever $\mathbb{A}_N$ is regular for sufficiently large $N$.
  Especially, each charge $Q_i(0\leq i\leq N)$ has order $O(1)$.
\end{prop}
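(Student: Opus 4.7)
The plan is to exploit the linear system $\mathbb{A}_N \mathbf{Q}_N = \mathbf{K}_N$ and bound each of the three quantities in turn, beginning with the right-hand side and working toward the matrix inverse.

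First I would handle $\frac{1}{N}\|\mathbf{K}_N\|_1$. Under the standing assumption that $\Gamma^N$ approximates a $C^2$ curve, the outer angles satisfy $|\varphi_i|=O(1/N)$ (so $|\tan_i|=O(1/N)$) and the edge lengths satisfy $r_i\sim L/N$. Plugging these into $\kappa_i=(\tan_i+\tan_{i-1})/r_i$ gives $\max_i|\kappa_i|=O(1)$, hence $\|\mathbf{K}_N\|_1=\sum_{i=1}^N|\kappa_i|=O(N)$.

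The heart of the argument is the bound on $\|\mathbb{A}_N^{-1}\|_1$. I would use the block decomposition together with the Schur complement formula
\[
\mathbb{A}_N^{-1}=\begin{pmatrix}S^{-1} & -S^{-1}\mathbf{H}^T\mathbb{G}^{-1}\\ -\mathbb{G}^{-1}\mathbf{1}S^{-1} & \mathbb{G}^{-1}+\mathbb{G}^{-1}\mathbf{1}S^{-1}\mathbf{H}^T\mathbb{G}^{-1}\end{pmatrix},\quad S=-\mathbf{H}^T\mathbb{G}^{-1}\mathbf{1}.
\]
The CSM block $\mathbb{G}$ has constant diagonal $\mathbb{G}_{ii}=(\log 2)/(2\pi)$, while a first-order Taylor expansion of $E(\cdot-y_j^+)-E(\cdot-z_j^+)$ in the small parameter $d$ produces off-diagonals of double-layer type $\sim\frac{d}{4\pi}(\mathbf{X}_i^*-\mathbf{X}_j^*)\cdot\mathbf{n}_j/|\mathbf{X}_i^*-\mathbf{X}_j^*|^2$. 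For sufficiently small $d$ this yields approximate diagonal dominance of $\mathbb{G}$ and hence $\|\mathbb{G}^{-1}\|_1=O(N)$, with theoretical backing from the CSM conditioning results of Katsurada--Okamoto cited in the introduction. The scalar $|S|$ is bounded away from zero because $\mathbf{1}$ represents the constant mode on $\Gamma^N$ and $\mathbf{H}$ represents the discrete flux of the associated harmonic field, which does not annihilate constants. Substituting these estimates into the Schur formula delivers $\|\mathbb{A}_N^{-1}\|_1=O(N)$.

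Finally I would assemble the bound on $\mathbf{Q}_N$ and the "especially" conclusion. The naive inequality $\|\mathbf{Q}_N\|_1\le \|\mathbb{A}_N^{-1}\|_1\|\mathbf{K}_N\|_1$ would give $O(N^2)$, which is too weak, so instead I would run the same block analysis for rows rather than columns to obtain $\|\mathbb{A}_N^{-1}\|_\infty=O(1)$, and then use $\|\mathbf{Q}_N\|_\infty\le\|\mathbb{A}_N^{-1}\|_\infty\|\mathbf{K}_N\|_\infty=O(1)$. This pointwise bound is exactly the claim that each $Q_i=O(1)$, and summing it gives $\|\mathbf{Q}_N\|_1\le(N+1)\|\mathbf{Q}_N\|_\infty=O(N)$.

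The main obstacle will be the sharp bound on $\mathbb{G}^{-1}$. The row and column sums of $\mathbb{G}$ itself already contain a potential logarithmic divergence since the double-layer kernel only decays like $d/\mathrm{dist}$, so one cannot rely on naive Gershgorin estimates and must exploit the signed structure of the kernel together with the $C^2$-regularity of the curve to extract the cancellations that keep $\mathbb{G}$ uniformly well-conditioned. This is precisely the step at which the standing assumption that $\mathbb{A}_N$ is regular for sufficiently large $N$ becomes indispensable.
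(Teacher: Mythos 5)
Your strategy (Schur complement of the block matrix plus a conditioning estimate for $\mathbb{G}$) is genuinely different from the paper's, which writes $\mathbb{A}_N^{-1}=\widetilde{\mathbb{A}}_N/\det{\mathbb{A}_N}$ and derives everything from order estimates of the determinant and of the individual cofactors ($\det{\mathbb{A}_N}=O(N!N^{3/2})$, cofactors $O(N!\sqrt{N})$, both consequences of Lemma \ref{lem:estimate_H_G}: $|G_{i,j}|=O(1)$, $|H_j|=O(\sqrt{N})$). The difference matters because the paper's route yields an \emph{entrywise} bound $|(\mathbb{A}_N^{-1})_{i,j}|=O(1/N)$, from which both $\|\mathbb{A}_N^{-1}\|_1=O(N)$ and $Q_i=\sum_j(\mathbb{A}_N^{-1})_{i,j}\kappa_{j-1}=O(1)$ follow at once, whereas your route must produce the column-sum and row-sum bounds separately.

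However, as written your argument has genuine gaps at exactly the load-bearing steps. First, the claim $\|\mathbb{G}^{-1}\|_1=O(N)$ via approximate diagonal dominance does not go through: the diagonal is $\frac{\log 2}{2\pi}=O(1)$, but each off-diagonal entry is $O(d)=O(N^{-1/2})$ (the double-layer kernel $(\mathbf{X}_i^*-\mathbf{X}_j^*)\cdot\mathbf{n}_j/|\mathbf{X}_i^*-\mathbf{X}_j^*|^2$ is bounded on a $C^2$ curve, not decaying), so the off-diagonal row and column sums are $O(\sqrt{N})$ and swamp the diagonal. You acknowledge this and appeal to sign cancellations and to the standing hypothesis that $\mathbb{A}_N$ is regular, but regularity is purely qualitative: it gives invertibility for each $N$ and no $N$-uniform control of $\|\mathbb{G}^{-1}\|$ or of the lower bound on the Schur scalar $|S|=|\mathbf{H}^T\mathbb{G}^{-1}\mathbf{1}|$, which you also only assert heuristically. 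Second, the step $\|\mathbb{A}_N^{-1}\|_\infty=O(1)$ is much stronger than $\|\mathbb{A}_N^{-1}\|_1=O(N)$ and does not follow from ``the same block analysis for rows'': even granting $\|\mathbb{G}^{-1}\|_1=O(N)$, entries of $\mathbb{G}^{-1}$ of generic size $O(1)$ give row sums of order $N$. What is really needed is the entrywise decay $|(\mathbb{A}_N^{-1})_{i,j}|=O(1/N)$, which is precisely what the paper's cofactor-over-determinant computation is designed to deliver; without it your final inequality $\|\mathbf{Q}_N\|_\infty\le\|\mathbb{A}_N^{-1}\|_\infty\|\mathbf{K}_N\|_\infty=O(1)$, and hence the ``especially'' clause, is unsupported.
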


\begin{thm}[Curve shortening property]\label{thm:CS}
  Assume that $L$ is defined by $\eqref{eq:ContinuousLength}$.
  Then, it holds that
    \begin{equation}\label{eq:thm_CS}
        \dot{L} \leq -\int_{\Omega^N_t}|\nabla U^+|^2 - \int_{B_R\backslash\overline{\Omega^N_t}}|\nabla U^-|^2 + O(N^2\log{N}) + C_N\cdot O\left(\frac{1}{R}\right)\ \ \mbox{as}\ \ N,R\rightarrow\infty
    \end{equation}
    where $C_N$ denotes a positive constant depending on $N$.
\end{thm}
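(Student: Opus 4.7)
The plan is to mimic the continuous computation displayed just before the statement, replacing each boundary integral by a midpoint-rule quadrature over the edges of $\Gamma^N_t$ and then using Green's first identity on the two polygonal regions where $U^+$ and $U^-$ are harmonic.

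First, by \eqref{eq:prop1-1} we have $\dot L = \sum_{i=1}^N \kappa_i(v_i^+ + v_i^-)r_i$, and the first rows of the linear systems \eqref{eq:internal_problem} and \eqref{eq:external_problem} give precisely $U^\pm(\mathbf{X}_i^*) = \kappa_i$. Hence
\[
\dot L \;=\; \sum_{i=1}^N U^+(\mathbf{X}_i^*)\, v_i^+\, r_i \;+\; \sum_{i=1}^N U^-(\mathbf{X}_i^*)\, v_i^-\, r_i.
\]
Recalling $v_i^+ = -\partial_{\mathbf{n}_i}U^+(\mathbf{X}_i^*)$ and $v_i^- = \partial_{\mathbf{n}_i}U^-(\mathbf{X}_i^*)$ with $\mathbf{n}_i$ pointing out of $\Omega^N_t$, each sum is the midpoint-rule approximation of $\mp\int_{\Gamma^N_t} U^\pm\partial_{\mathbf n}U^\pm\,dS$. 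Using $\Delta U^+=0$ in $\Omega^N_t$ and $\Delta U^-=0$ in $B_R\setminus\overline{\Omega^N_t}$ (with outward normal $-\mathbf{n}_i$ on the inner boundary), Green's identity yields $\int_{\Gamma^N_t}U^+\partial_{\mathbf n}U^+\,dS=\int_{\Omega^N_t}|\nabla U^+|^2\,dx$ and $\int_{\Gamma^N_t}U^-\partial_{\mathbf n}U^-\,dS=-\int_{B_R\setminus\overline{\Omega^N_t}}|\nabla U^-|^2\,dx+\int_{\partial B_R}U^-\partial_{\mathbf{n}_R}U^-\,dS$.

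The far-field term is handled by the dipole decay of the paired fundamental solutions: a direct expansion gives $E(x-y_i^-)-E(x-z_i^-)=O(1/|x|)$ and $\nabla\bigl(E(\cdot-y_i^-)-E(\cdot-z_i^-)\bigr)=O(1/|x|^2)$, so combined with Proposition \ref{thm:uniform_boundedness_charges} we get $U^-=Q_0^-+O(C_N/R)$ and $|\nabla U^-|=O(C_N/R^2)$ on $\partial B_R$; multiplied by the length $2\pi R$ of $\partial B_R$, the boundary integral is of order $C_N\cdot O(1/R)$, as required.

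The main obstacle is controlling the two midpoint-quadrature errors $E^+ + E^-$ by $O(N^2\log N)$. The integrand $U^\pm\partial_{\mathbf n}U^\pm$ is smooth along each edge $[\mathbf{X}_{i-1},\mathbf{X}_i]$, but its higher tangential derivatives blow up like inverse powers of the distance to the nearby singular points $y_j^\pm, z_j^\pm$, with the worst contribution arising from $z_i^\pm$ at distance only $d/2$ from the midpoint of the same edge. My plan is to use the standard edge-wise midpoint remainder $O\bigl(r_i^3\|(U^\pm\partial_{\mathbf n}U^\pm)''\|_{L^\infty(\mathrm{edge}_i)}\bigr)$ together with the uniform bound $|Q_j^\pm|=O(1)$ from Proposition \ref{thm:uniform_boundedness_charges}, and to control the resulting logarithmic-potential sums $\sum_j|\mathbf{X}_i^*-y_j^\pm|^{-k}$ (which contribute the $\log N$ factor) by the explicit geometry of the charge configuration described in Subsection 2.2; summing over the $N$ edges then produces the $O(N^2\log N)$ bound. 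Combining the four contributions gives \eqref{eq:thm_CS}.
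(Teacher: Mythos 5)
Your skeleton is exactly the paper's: Proposition \ref{prop:1} for $\dot L$, the collocation equations $U^\pm(\mathbf{X}_i^*)=\kappa_i$ to rewrite $\dot L$ as a midpoint-rule value of $S_i:=U^+\nabla U^+\cdot\mathbf{n}_i-U^-\nabla U^-\cdot\mathbf{n}_i$ on each edge, Green's identity on $\Omega^N_t$ and $B_R\setminus\overline{\Omega^N_t}$, and the dipole decay of $E(\cdot-y_i^-)-E(\cdot-z_i^-)$ for the $\partial B_R$ term (the paper routes this through Lemmas \ref{lem:boundedness_nabla_1/x**2} and \ref{lem:U_minus_is_bounded}, with $C_N=\sup|U^-|$, but your version is equivalent). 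The one place where the theorem's actual content lives, however --- the $O(N^2\log N)$ bound on the quadrature error --- is precisely the step you do not carry out, and the tool you announce for it is not the right one.

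You propose the second-order midpoint remainder $O\bigl(r_i^3\|(U^\pm\partial_{\mathbf n}U^\pm)''\|_{L^\infty}\bigr)$, which requires uniform bounds on \emph{second} tangential derivatives of $S_i$, i.e.\ third derivatives of $U^\pm$ at distance $d/2=O(N^{-1/2})$ from the dummy charges. No such estimate is available in the paper, and deriving one is a nontrivial extra lemma; your closing sentence (``control the resulting logarithmic-potential sums \dots by the explicit geometry'') is a plan, not an argument, so the headline exponent $N^2\log N$ is never actually obtained. The paper closes this gap with a strictly \emph{first-order} bound: Lemma \ref{lem:estimate_S_i} gives $\sup_i\|\nabla S_i\|_{L^\infty(\Gamma_i)}\le C$ with the explicit constant
\begin{equation*}
C=\frac{2L^2}{\pi^2 d^2}\Bigl(\sum_{j=0}^N|Q_j|\Bigr)^2\Bigl(1+\sqrt{3}\,\pi\log\tfrac{L+\tilde d}{d}\Bigr)=O(N)\cdot O(N^2)\cdot O(\log N)=O(N^3\log N),
\end{equation*}
using $d,\tilde d=O(N^{-1/2})$ and $\sum_j|Q_j|=O(N)$ from Proposition \ref{thm:uniform_boundedness_charges}. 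Then $|S_i(x)-S_i(\mathbf{X}_i^*)|\le C\cdot\frac{L}{2N}$ on $\Gamma_i$, and summing $\int_{\Gamma_i}|S_i(x)-S_i(\mathbf{X}_i^*)|\,dS\le C\cdot\frac{L}{2N}\cdot r_i$ over the $N$ edges gives $C\cdot\frac{L^2}{2N}=O(N^2\log N)$ directly. So you should replace your second-order remainder by this Lipschitz estimate (or else supply the missing second-derivative lemma); as written, the decisive bound is asserted rather than proved.
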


\begin{rem}
  Despite the appearance of the term $O(N^2\log{N})$, the right-hand side of $\eqref{eq:thm_CS}$ becomes nonpositive for $N$ large enough.
  Indeed, since $d\leq|x - y_j|\leq d + \mbox{diam}(\Omega^N)$ for each $1\leq j\leq N$ and $x\in\overline{\Omega^N}$, it holds that $|x - y_j| = O(\frac{1}{\sqrt{N}})$.
  An argument similar to Lemma \ref{lem:estimate_H_G} (see Section \ref{sec:SequenceOfTheProofs}) shows 
  \begin{equation*}
    \left(\frac{x - y_j}{|x - y_j|^2} - \frac{x - z_j}{|x - z_j|^2}\right) = O(\sqrt{N}).
  \end{equation*}
  Hence, a direct calculation gives:
  \begin{eqnarray*}
    |\nabla U^+(x)|^2 &=& \frac{1}{4\pi^2}\sum_{j = 1}^NQ_i^+\left(\frac{x - y_j}{|x - y_j|^2} - \frac{x - z_j}{|x - z_j|^2}\right)\cdot\sum_{j = 1}^NQ_i^+\left(\frac{x - y_j}{|x - y_j|^2} - \frac{x - z_j}{|x - z_j|^2}\right) \\ 
    &=& \sum_{i,j=1}^N O(1)\times O(1)\times O(\sqrt{N})\times O(\sqrt{N}) = O(N^3).
  \end{eqnarray*}
  Here, we have recalled that $Q_j^+ = O(1)$ for each $1\leq j\leq N$ from Proposition \ref{thm:uniform_boundedness_charges}.
  To get a desired property, we fix $N$ so large that $-\int_{\Omega^N}|\nabla U^+|^2 + O(N^2\log{N})$ is negative, and then, 
  send $R$ to infinity. Note that the term $-\int_{B_R\backslash\overline{\Omega^N}}|\nabla{U}^-|^2$ is decreasing with respect to $R$.
  Thanks to Proposition \ref{prop:1}, we can expect that our scheme decreases the length of the polygon step-by-step.
  This expectation turns out to be true subsequently (see Corollary \ref{cor:time_step_optimization}).
\end{rem}

\begin{rem}
  If the domain that we approximate by a polygon is a circle, then the term $O(N^2\log{N})$ appearing in Theorem \ref{thm:CS} is expected to be replaced by $O(\frac{\alpha^N}{N})$ for some $0<\alpha<1$.
  Let us write the approximate solutions of $\eqref{eq:def_of_U}$ and $\eqref{eq:def_of_U_bar}$ as $U^+_{(N)}$ and $U^-_{(N)}$, respectively.
  Moreover, we replace the domain in which we solve the external problem with an annulus.
  Then, from the results of Katsurada (Theorem 2.2 and Theorem 4.1 \cite{KatsuradaOkamotoII}) and Murota (Theorem 2.4 \cite{MurotaKazuo}), we can expect 
  that $U^+_{(N)}$ and $U^-_{(N)}$ converge exponentially to the exact solutions $U^+$ and $U^-$ as $N\rightarrow\infty$.
  We cannot apply these results directly because their selection of collocation points is slightly different from ours.
  Thus, let us assume that the results are available for our study. Since $\Gamma^N$ is a regular polygon,
  we see that $\kappa_i = \kappa$ for every $1\leq i\leq N$. Then, the AP property, which is the second constraints of $\eqref{eq:internal_problem}$ and $\eqref{eq:external_problem}$, guarantees that
  \begin{equation*}
    \sum_{i=1}^N\int_{\Gamma_i^N}S_i(\mathbf{X}_i^*)dS = 0.
  \end{equation*}
  Since $U^+_{(N)}$ and $U^-_{(N)}$ are radially symmetric due to their construction, 
  $\nabla U^+_{(N)}\cdot\mathbf{n}_i - \nabla U^-_{(N)}\cdot\mathbf{n}_i$ has the same sign for all $1\leq i\leq N$.
  Hence, we can assume that this value is nonnegative without loss of generality.
  Then, we can estimate as follows:
  \begin{eqnarray*}
    \left|\sum_{i=1}^N\int_{\Gamma_i^N}S_i(x)dS\right| &=& \left|\sum_{i=1}^N\int_{\Gamma_i}\left(U^+_{(N)}\nabla U^+_{(N)}\cdot\mathbf{n}_i - U^-_{(N)}\nabla U^-_{(N)}\cdot\mathbf{n}_i\right)dS\right| \\
    &\leq& \sum_{i=1}^N \sup_{\Gamma_i^N}|U^+_{(N)} - U^-_{(N)}|\int_{\Gamma_i}\left(\nabla U^+_{(N)}\cdot\mathbf{n}_i - \nabla U^-_{(N)}\cdot\mathbf{n}_i\right)dS \\
    &\leq& \sup_{\Omega}|U^+_{(N)} - U^-_{(N)}|\sum_{i=1}^N \int_{\Gamma_i}\left(\nabla U^+_{(N)}\cdot\mathbf{n}_i - \nabla U^-_{(N)}\cdot\mathbf{n}_i\right)dS. 
  \end{eqnarray*}
  Here, we note that $\Gamma_i^N\subset\overline{\Omega}$ for each $1\leq i\leq N$ to derive the last inequality.
  Moreover, there are neither singular points nor dummy singular points in the domain $\Omega_i^N$ sandwiched between $\Gamma_i^N$ and $\partial\Omega$ for every $1\leq i\leq N$
  for $N\in\mathbb{N}$ large enough. To see this, recall that $|\mathbf{X}_i^* - y_i^{\pm}| = |\mathbf{X}_i^* - z_i^{\pm}| = O(\frac{1}{\sqrt{N}})$
  and the width of $\Omega_i^N$ in the direction $\mathbf{n}_i$ equals $r\left(1-\cos{\frac{\pi}{2N}}\right) = O\left(\frac{1}{N^2}\right)$.
  This obviously implies that $y^{\pm}_i,z^{\pm}_i\notin\Omega_i^N$ for sufficiently large $N\in\mathbb{N}$.
  Hence, we obtain
  \begin{equation*}
    0 = \int_{\Omega_i^N} \Delta U^{\pm}_{(N)}dx = \int_{\partial\Omega_i^N\cap\partial\Omega}\nabla U^{\pm}_{(N)}\cdot\mathbf{n}dS - \int_{\Gamma_i^N}\nabla U^{\pm}_{(N)}\cdot\mathbf{n}_idS.
  \end{equation*}
  From this observation, we can calculate as follows:
  \begin{eqnarray*}
    \left|\sum_{i=1}^N\int_{\Gamma_i^N}S_i(x)dS\right| &\leq & \sup_{\Omega}|U^+_{(N)} - U^-_{(N)}| \sum_{i=1}^N\int_{\partial\Omega_i^N\cap\partial\Omega}\left(\nabla U^+_{(N)}\cdot\mathbf{n}_i - \nabla U^-_{(N)}\cdot \mathbf{n}_i\right)dS \\
    &\leq & C\tau^N \int_{\partial\Omega} \left(\nabla U^+_{(N)}\cdot\mathbf{n}_i - \nabla U^-_{(N)}\cdot\mathbf{n}_i\right)dS = O(1)\tau^N\ \ \mbox{as}\ \ N\rightarrow\infty.
  \end{eqnarray*}
  Admitting Katsurada's result(See Remark4.1 \cite{KatsuradaOkamotoII}), the integration of the right hand side of the above inequality tends to
  $\int_{\partial\Omega}\left[ \frac{\partial U}{\partial\mathbf{n}} \right] dS$ as $N\rightarrow\infty$ and this quantity equals zero
  if $U^+$ and $U^-$ are exact solutions to $\eqref{eq:Mullins_Sekerka_intro}$.
  In this way, we can predict that it is not necessary to take $N$ so large to ensure the CS property holds.
\end{rem}

\begin{thm}[Area preserving property]\label{thm:AP}
    Assume that the vertices $\mathbf{X}_i$ are uniformly distributed, namely, $r_i = r_{i+1}$ for each $1\leq i\leq N$ and
    these are differentiable with respect to the time variable.
    Then, the area $A$ surrounded by $\Gamma$ is theoretically preserved for all $t$; that is $\dot{A} = 0$,
    provided that we adopt the UDM.
\end{thm}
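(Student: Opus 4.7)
The plan is to read $\dot A=0$ directly off of Proposition \ref{prop:1}, exploiting two separate structural facts: the uniform distribution hypothesis kills the error term, while the second (average / AP) constraint in each of the linear systems $\eqref{eq:internal_problem}$ and $\eqref{eq:external_problem}$ kills the main sum. So the proof is really a two-line bookkeeping after unpacking definitions.

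First I would write $\dot{A}=\sum_{i=1}^N(v_i^++v_i^-)r_i+\operatorname{err}A$ from $\eqref{eq:prop1-2}$. Since the hypothesis $r_i=r_{i+1}$ for every $1\le i\le N$ forces $r_{i+1}-r_i=0$, every summand defining $\operatorname{err}A$ carries the factor $(r_{i+1}-r_i)/2$ and therefore vanishes identically, independently of $W_i$ and of the $v_i^\pm$; this is where the equidistribution assumption is really used, and where UDM enters (UDM is what lets us propagate the hypothesis $r_i=r_{i+1}$ consistently in time so that the statement is not empty).

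Next I would show $\sum_{i=1}^N v_i^+ r_i=0$ by exchanging the order of summation. By $\eqref{eq:normal_velocity}$ and $\eqref{eq:def_of_U}$,
\begin{equation*}
  v_i^+=-\nabla U^+(\mathbf X_i^*)\cdot\mathbf n_i=-\sum_{j=1}^N Q_j^+\,\mathbf H_{i,j}\cdot\mathbf n_i,
\end{equation*}
so that
\begin{equation*}
  \sum_{i=1}^N v_i^+ r_i=-\sum_{j=1}^N Q_j^+\sum_{i=1}^N \mathbf H_{i,j}\cdot\mathbf n_i\,r_i=\sum_{j=1}^N Q_j^+ H_j=0,
\end{equation*}
by the definition of $H_j$ and the second (AP) equation of $\eqref{eq:internal_problem}$. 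The analogous computation for the exterior side—taking into account that $v_i^-=-\nabla U^-(\mathbf X_i^*)\cdot(-\mathbf n_i)$, which flips a sign relative to the $+$ side—reduces $\sum_i v_i^- r_i$ to $-\sum_j Q_j^- H_j^-$, which vanishes by the second equation of $\eqref{eq:external_problem}$. Adding the two identities gives $\sum_i(v_i^++v_i^-)r_i=0$, and combining with the vanishing of $\operatorname{err}A$ yields $\dot{A}=0$.

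There is no genuine analytic obstacle here; the only thing to be careful about is the sign conventions—namely that the outward normal for the exterior problem is $-\mathbf n_i$, and that the second equation of $\eqref{eq:external_problem}$ carries an overall minus sign—so that the two AP constraints really do correspond, on the nose, to the vanishing of $\sum_i v_i^\pm r_i$. Once these are correctly accounted for, the result is immediate.
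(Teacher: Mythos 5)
Your proposal is correct and follows essentially the same route as the paper's own proof: apply Proposition \ref{prop:1}, kill $\operatorname{err}A$ via the equidistribution hypothesis, and identify the vanishing of $\sum_i v_i^{\pm}r_i$ with the second (AP) equations of $\eqref{eq:internal_problem}$ and $\eqref{eq:external_problem}$. The only difference is that you spell out the summation exchange and the sign bookkeeping that the paper leaves implicit, which is a faithful elaboration rather than a different argument.
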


Theorem \ref{thm:CS} indicates that our scheme has the CS property in some sense once we assume the differentiability of $\mathbf{X}_i$ with respect to the time variable.
Our next trial is to derive a time-discrete version of Theorem \ref{thm:CS}.
For simplicity of notation, let us use the symbol $v_i^n$ instead of $v^+_i + v^-_i$.
Moreover, let $r^n_i$ and $\kappa^n_i$ denote the corresponding values of $\Gamma^{N,n}_{\Delta t}$ defined in $\eqref{eq:rtn}$ and $\eqref{eq:DiscreteCurvature}$, respectively.

\begin{thm}[Discrete version of the CS property]\label{thm:discrete_L_formula}
  For each $n\geq 0$, let $L^n := \sum_{i = 1}^N|\mathbf{X}^n_i - \mathbf{X}^n_{i-1}|$ where 
  $\mathbf{X}^n_i(1\leq i\leq N)$ is defined in terms of $\eqref{eq:FullyDiscreteScheme}$.
  Then, it holds that for each $n\geq 0$,
  \begin{equation}\label{eq:discrete_L_formula}
    L^{n+1}-L^n = \Delta t\sum_{j=1}^N\kappa^n_jv^n_jr^n_j + \Delta t^2\left(O\left(\frac{1}{N}\right)\left(\sum_{j=1}^N\kappa^n_jv^n_jr^n_j\right)^2 + O(N^{\frac{3}{2}})\sum_{j=1}^N\kappa^n_jv^n_jr^n_j + O(N^4)\right).
  \end{equation}
\end{thm}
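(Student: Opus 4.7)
The plan is to expand each edge length $|\mathbf{X}_i^{n+1}-\mathbf{X}_{i-1}^{n+1}|$ to second order in $\Delta t$, extract the first-order term via discrete summation by parts, and estimate the quadratic coefficient by splitting it into contributions from the normal velocity $v_j^n$ and the tangential adjustment $W_j^n$. Writing $\mathbf{a}_i:=\mathbf{X}_i^n-\mathbf{X}_{i-1}^n=r_i^n\mathbf{t}_i^n$, $\mathbf{Y}_i:=V_i^n\mathbf{N}_i^n+W_i^n\mathbf{T}_i^n$, and $\mathbf{b}_i:=\mathbf{Y}_i-\mathbf{Y}_{i-1}$, the exact identity $|\mathbf{a}_i+\Delta t\,\mathbf{b}_i|^2=(r_i^n)^2+2\Delta t\,\mathbf{a}_i\cdot\mathbf{b}_i+\Delta t^2|\mathbf{b}_i|^2$ together with a Taylor expansion of $\sqrt{1+x}$ and the identity $|\mathbf{b}_i|^2-(\mathbf{t}_i^n\cdot\mathbf{b}_i)^2=(\mathbf{n}_i^n\cdot\mathbf{b}_i)^2$ gives
\[
|\mathbf{a}_i+\Delta t\,\mathbf{b}_i|-r_i^n
=\Delta t\,\mathbf{t}_i^n\cdot\mathbf{b}_i+\frac{\Delta t^2}{2r_i^n}(\mathbf{n}_i^n\cdot\mathbf{b}_i)^2+O(\Delta t^3).
\]

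I then sum over $i$ and treat the first-order contribution by summation by parts using the rotation identity $\mathbf{t}_{i+1}^n-\mathbf{t}_i^n=-2\sin_i^n\mathbf{N}_i^n$ (a direct consequence of the definitions of $\varphi_i$, $\mathbf{n}_i$, $\mathbf{N}_i$) and the orthogonality $\mathbf{N}_i^n\perp\mathbf{T}_i^n$; this yields $\sum_i\mathbf{t}_i^n\cdot\mathbf{b}_i=2\sum_i\sin_i^n V_i^n=\sum_j\kappa_j^n v_j^n r_j^n=:S$, exactly as in Proposition~\ref{prop:1}. For the quadratic piece, the inner products $\mathbf{N}_i\cdot\mathbf{n}_i=\cos_i$, $\mathbf{T}_i\cdot\mathbf{n}_i=-\sin_i$, $\mathbf{N}_{i-1}\cdot\mathbf{n}_i=\cos_{i-1}$, $\mathbf{T}_{i-1}\cdot\mathbf{n}_i=\sin_{i-1}$ give
\[
\mathbf{n}_i^n\cdot\mathbf{b}_i=\tfrac{1}{2}(v_{i+1}^n-v_{i-1}^n)-(W_i^n\sin_i^n+W_{i-1}^n\sin_{i-1}^n),
\]
which I split as $a_i-w_i$ with $a_i=(v_{i+1}^n-v_{i-1}^n)/2$ and $w_i=W_i^n\sin_i^n+W_{i-1}^n\sin_{i-1}^n$.

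To obtain the decomposition stated in the theorem, I substitute the explicit UDM expression $W_i^n=(\Psi_i^n+C)/\cos_i^n$, $\psi_j^n=S/N-V_j^n\sin_j^n-V_{j-1}^n\sin_{j-1}^n+(L^n/N-r_j^n)\omega$, into the $w_i^2$ and $a_iw_i$ pieces. The telescoping $\Psi_i^n$ contains a dominant $iS/N$ component; upon squaring, multiplying by $\sin_i^n\sin_j^n/r_i^n$ and summing it produces the $O(1/N)\,S^2$ term, while the products of $iS/N$ with the remaining constituents of $\Psi_i^n$ (the partial sums of $V_j^n\sin_j^n$, which themselves sum to $S/2$, and the mesh-regularizing $(L^n/N-r_j^n)\omega$) give the $O(N^{3/2})\,S$ cross term. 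The pure $a_i^2/r_i^n$ contribution and the leftover $w_i^2$ pieces not involving $S$ are bounded using Proposition~\ref{thm:uniform_boundedness_charges} together with the pointwise estimate $v_i^n=O(N^{3/2})$ established in the remark following Theorem~\ref{thm:CS}, producing the residual $O(N^4)$.

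The main obstacle is the bookkeeping in this last step: one must expand $(\Psi_i^n+C)^2$ into its four-by-four sum of constituent products, identify exactly which terms carry which power of $S$, and verify that after division by $r_i^n=O(1/N)$ and the two factors of $\sin=O(1/N)$ the aggregate respects the claimed $(1/N)S^2+N^{3/2}S+N^4$ structure rather than the naive $O(N^5)$ bound. The $O(\Delta t^3)$ remainder from the Taylor expansion is controlled by the same uniform bounds on $v_i^n$ and $W_i^n$ and, for $\Delta t$ small relative to a polynomial in $N$, is absorbed into the $\Delta t^2\cdot O(N^4)$ term of the statement.
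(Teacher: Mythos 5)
Your overall route is the same as the paper's: Taylor-expand each edge length in $\Delta t$, recover the first-order term $\Delta t\sum_j\kappa^n_jv^n_jr^n_j$ from the tangential summation-by-parts (equivalently, from the UDM identity), and then bound the quadratic coefficient using the crude magnitudes $v^n_i=O(N^{3/2})$, $W^n_i=O(1)\sum_j\kappa^n_jv^n_jr^n_j+O(N^{3/2})$, $\sin^n_i=O(1/N)$ coming from Lemma \ref{lem:estimate_H_G} and Proposition \ref{thm:uniform_boundedness_charges}. Your treatment of the first-order term is clean and correct. The problem is the second-order term, and it is exactly the step you yourself flag as ``the main obstacle'': the bookkeeping does not close with the estimates you have put on the table.

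Concretely, your (correct) expansion of $|\mathbf{a}_i+\Delta t\,\mathbf{b}_i|$ carries the factor $\frac{1}{2r^n_i}=O(N)$ in front of $(\mathbf{n}^n_i\cdot\mathbf{b}_i)^2$. With $\mathbf{n}^n_i\cdot\mathbf{b}_i=\frac{1}{2}(v^n_{i+1}-v^n_{i-1})-(W^n_i\sin^n_i+W^n_{i-1}\sin^n_{i-1})$ and the available bounds, each summand satisfies only
\begin{equation*}
\frac{1}{2r^n_i}(\mathbf{n}^n_i\cdot\mathbf{b}_i)^2
= O\!\left(\frac{1}{N}\right)\Bigl(\sum_j\kappa^n_jv^n_jr^n_j\Bigr)^2
+ O(N^{\frac{3}{2}})\sum_j\kappa^n_jv^n_jr^n_j + O(N^4),
\end{equation*}
and since these summands are non-negative there is no cancellation over $i$: summing $N$ of them gives $O(1)S^2+O(N^{5/2})S+O(N^5)$ with $S:=\sum_j\kappa^n_jv^n_jr^n_j$, one full power of $N$ worse than \eqref{eq:discrete_L_formula} in every slot. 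To rescue this you would need a genuinely sharper input, e.g.\ that the discrete differences $v^n_{i+1}-v^n_{i-1}$ are an order of $\sqrt{N}$ smaller than the pointwise bound $O(N^{3/2})$, or an improved bound on $W^n_i\sin^n_i$; you assert that the expansion of $(\Psi^n_i+C)^2$ ``respects the claimed structure rather than the naive $O(N^5)$ bound'' but give no mechanism for the saving, and none is visible. For comparison, the paper's own proof reaches \eqref{eq:discrete_L_formula} by writing the Taylor remainder as $\Delta t^2\,O(|\mathbf{V}^n_j|^2)$ \emph{without} the $1/r^n_j$ factor, so that each term contributes $O(1/N^2)S^2+O(\sqrt{N})S+O(N^3)$ and the sum over $j$ lands exactly on the stated orders; your more faithful expansion of the norm exposes that this hidden factor of $1/r^n_j=O(N)$ is precisely what separates the claimed bound from what the crude per-vertex estimates actually deliver. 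As written, your argument proves the identity with $O(1)S^2+O(N^{5/2})S+O(N^5)$ in place of the parenthesis in \eqref{eq:discrete_L_formula}, which is not sufficient to recover Corollary \ref{cor:time_step_optimization} in the stated range of $\alpha$.
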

    We deduce from Theorem \ref{thm:CS} and Theorem \ref{thm:discrete_L_formula} that the highest degree with respect to $N$ of negative terms on the right-hand side of $\eqref{eq:discrete_L_formula}$
    equals $\max{\{3-\alpha, \frac{9}{2} - 2\alpha\}}$. On the other hand, one of the positive terms equals $5-2\alpha$.
    Thus, by solving the inequality $\max{\{3-\alpha, \frac{9}{2} - 2\alpha\}} > 5-2\alpha$, we see that the right-hand side of $\eqref{eq:discrete_L_formula}$ should be negative
    for $N\in\mathbb{N}$ large enough whenever $\alpha > 2$.
    Consequently, we obtain the following corollary.
\begin{cor}\label{cor:time_step_optimization}
  For any $\alpha > 2$, suppose that $\Delta t := N^{-\alpha}$. Then, for sufficiently large $N$, $L^{n+1}\leq L^n$ is valid for each $n\geq 0$.
\end{cor}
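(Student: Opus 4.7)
The plan is to substitute $\Delta t = N^{-\alpha}$ directly into the identity of Theorem \ref{thm:discrete_L_formula} and carry out an asymptotic degree count in $N$ for each of its four summands. Writing $S_n := \sum_{j=1}^{N}\kappa_j^n v_j^n r_j^n$, which by Proposition \ref{prop:1} coincides with $\dot L$ evaluated at $t=t_n$, the formula $\eqref{eq:discrete_L_formula}$ becomes
\[
L^{n+1}-L^n \;=\; N^{-\alpha}\,S_n \;+\; N^{-2\alpha}\bigl(O(N^{-1})\,S_n^{2} + O(N^{3/2})\,S_n + O(N^{4})\bigr).
\]

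First I would pin down the size and sign of $S_n$. The pointwise calculation reproduced in the remark following Theorem \ref{thm:CS}, combined with Proposition \ref{thm:uniform_boundedness_charges} (which ensures $Q_i^{\pm}=O(1)$), already yields the upper bound $|S_n|\lesssim N^{3}$. For the matching lower bound with the correct sign I would invoke Theorem \ref{thm:CS} itself: letting $R\to\infty$ in $\eqref{eq:thm_CS}$,
\[
S_n \;\le\; -\!\int_{\Omega^{N,n}_{\Delta t}}\!|\nabla U^{+}|^{2} \;-\!\int_{\mathbb{R}^{2}\setminus\overline{\Omega^{N,n}_{\Delta t}}}\!|\nabla U^{-}|^{2} \;+\; O(N^{2}\log N),
\]
so once the two Dirichlet energies outgrow the error term $O(N^{2}\log N)$, the sum $S_n$ is negative and of magnitude of order $N^{3}$.

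The remaining step is an elementary comparison of exponents. With $|S_n|\asymp N^{3}$ and $S_n<0$, the four summands of the expanded identity have respective orders $-N^{3-\alpha}$, $O(N^{5-2\alpha})$, $O(N^{9/2-2\alpha})$ and $O(N^{4-2\alpha})$. The inequality $3-\alpha>5-2\alpha$ rearranges precisely to $\alpha>2$, whereas $3-\alpha>9/2-2\alpha$ and $3-\alpha>4-2\alpha$ are weaker (requiring only $\alpha>3/2$ and $\alpha>1$). Thus under the hypothesis $\alpha>2$ the negative leading term $N^{-\alpha}S_n$ absolutely dominates the other three, and the right-hand side of $\eqref{eq:discrete_L_formula}$ is non-positive for every sufficiently large $N$, giving $L^{n+1}\le L^{n}$.

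The hard part will be the quantitative lower bound $|S_n|\gtrsim N^{3}$ with the correct sign. Theorem \ref{thm:CS} supplies only the upper estimate $S_n\le -\int|\nabla U^{\pm}|^{2}+O(N^{2}\log N)$, so one needs additional control guaranteeing that the Dirichlet energies genuinely dominate $N^{2}\log N$; without such growth one cannot conclude that the first summand is of the claimed order. A secondary technical point is uniformity in the time index $n$: to chain the one-step inequality over all $n\ge 0$, the implicit constants in the $O$-notation and the bounds in Proposition \ref{thm:uniform_boundedness_charges} must be independent of $n$, i.e., the polygon geometry must not degenerate along the evolution, which ought to follow from the CS property itself in a bootstrap fashion.
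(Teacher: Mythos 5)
Your argument is essentially the paper's own: the corollary is justified there by exactly this exponent comparison in $\eqref{eq:discrete_L_formula}$ after setting $\Delta t = N^{-\alpha}$, with the negative terms assigned degree $\max\{3-\alpha,\tfrac{9}{2}-2\alpha\}$, the positive ones degree $5-2\alpha$, and the inequality $3-\alpha>5-2\alpha$ yielding $\alpha>2$. The caveats you flag at the end --- that one needs $S_n<0$ with $|S_n|\gtrsim N^{3}$ (Theorem \ref{thm:CS} only bounds $S_n$ from above) and that the implicit constants must be uniform in $n$ --- are equally left implicit in the paper's argument, so you have not diverged from it but rather made its tacit assumptions explicit.
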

\begin{rem}
  In \cite{Sakakibara_Yazaki}, several numerical experiments were carried out with $N = 100$ and $\Delta t := 0.1 N^{-2}$.
  This choice seems reasonable since $\Delta t = N^{-\frac{5}{2}}$, and this setting is consistent with the result of Corollary \ref{cor:time_step_optimization}.
\end{rem}
\section{Numerical examples}\label{sec:NumericalExamples}
\subsection{Pi curve}\label{sec:pi_curve}
We borrowed the coordinates of the pi curve as the initial data from https://ja.wolframalpha.com/, as \cite{Sakakibara_Yazaki}. See Figure \ref{fig:pi} for a numerical result.
\begin{figure}[H]
    \begin{minipage}[b]{0.5\linewidth}
      \centering
      \includegraphics[keepaspectratio, scale=0.25]{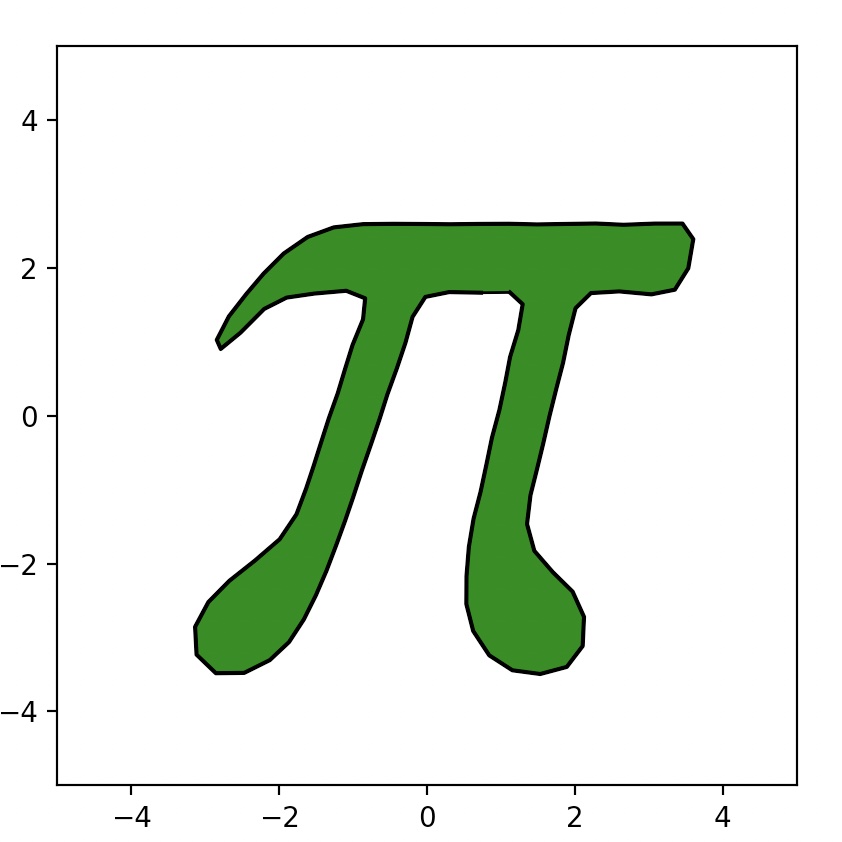}
      \subcaption{t = 0}\label{fig:pi_0_0}
    \end{minipage}
    \begin{minipage}[b]{0.5\linewidth}
      \centering
      \includegraphics[keepaspectratio, scale=0.25]{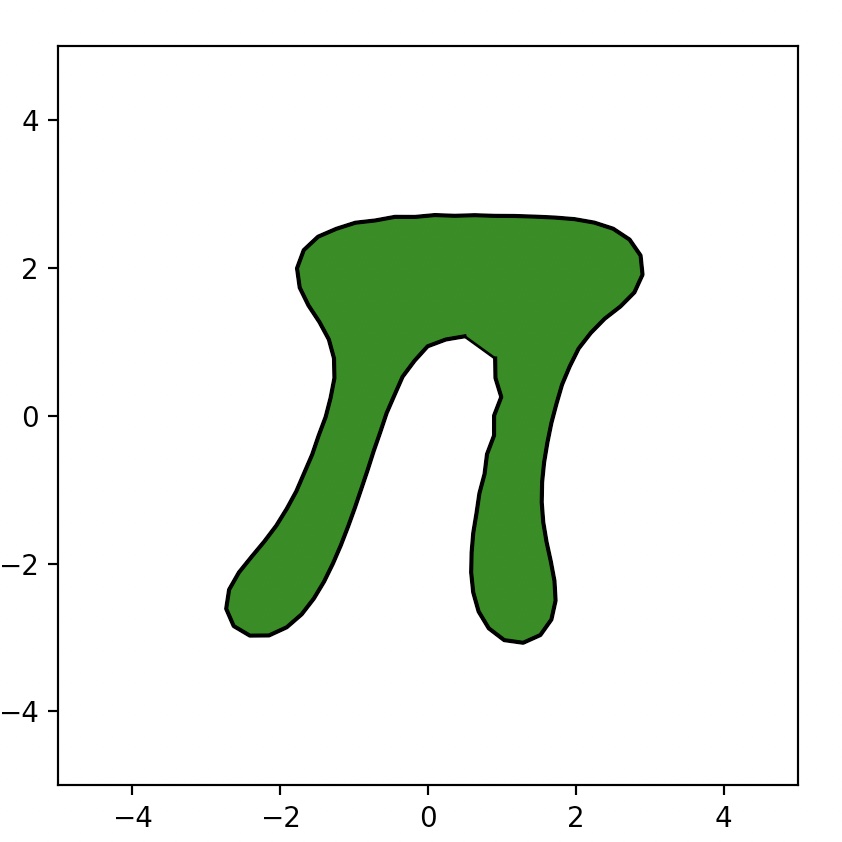}
      \subcaption{t=0.02}\label{fig:pi_0_02}
    \end{minipage} \\
    \begin{minipage}[b]{0.5\linewidth}
      \centering
      \includegraphics[keepaspectratio, scale=0.25]{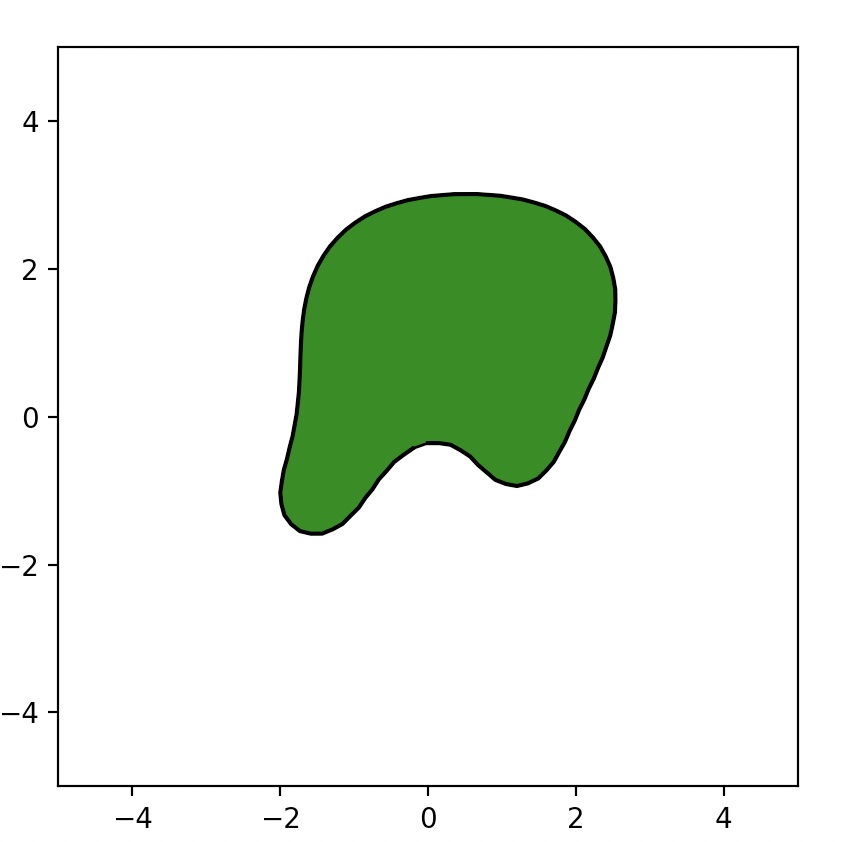}
      \subcaption{t = 0.08}\label{fig:pi_0_08}
    \end{minipage}
    \begin{minipage}[b]{0.5\linewidth}
      \centering
      \includegraphics[keepaspectratio, scale=0.25]{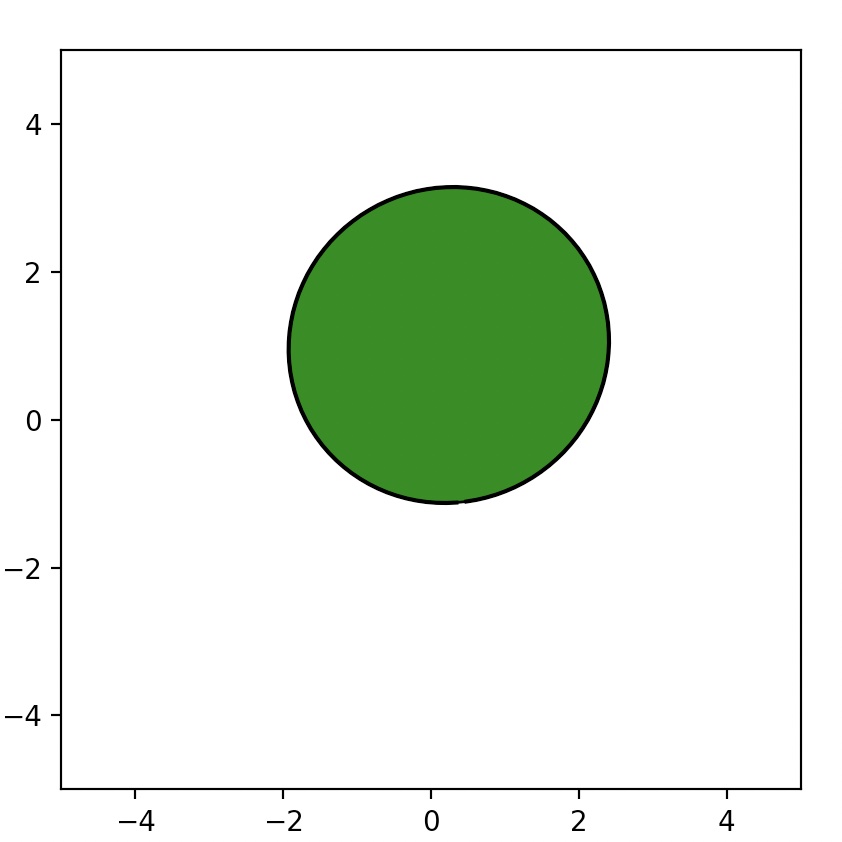}
      \subcaption{\Erase{t = 0.4}\Add{t = 0.7}}\label{fig:pi_0_40}
    \end{minipage}
    \caption{Evolution of Pi shaped curve. $N=100, \Delta t=0.1N^{-2}$.}\label{fig:pi}
  \end{figure}

  \begin{figure}[H]
      \centering
      \includegraphics[keepaspectratio, scale=0.5]{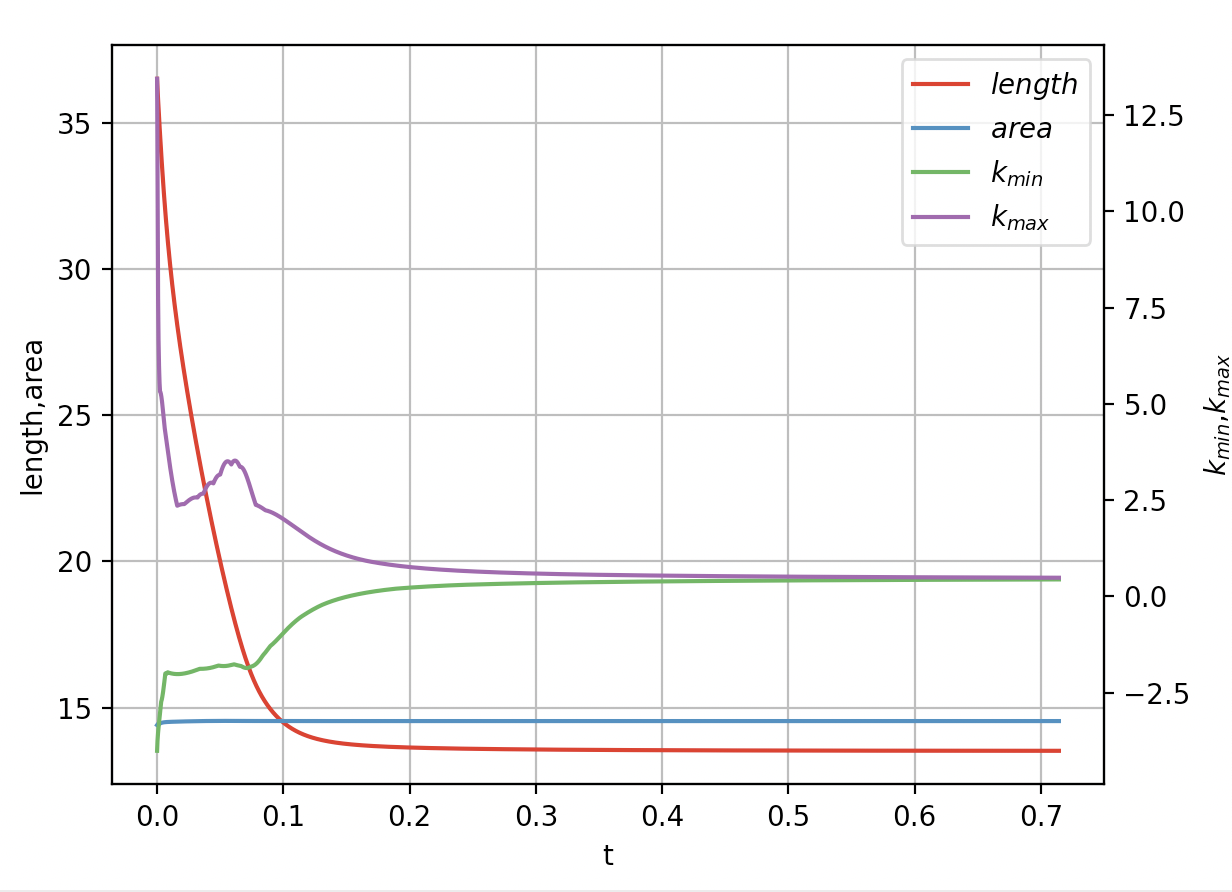}
      \caption{Time evolution of length, area and curvatures for pi-curve.}\label{fig:pi_length_area_k_graph}
  \end{figure}

  To confirm the uniform boundedness of $Q^+_0$, $\frac{1}{N}\sum_{j=1}^N|Q^+_j|$, ${Q}^-_0$, and $\frac{1}{N}\sum_{j=1}^N|{Q}^-_j|$ for a large $N$, 
  we extract the first step of the scheme for each $N$ and draw their values as a graph. We infer from Figure \ref{fig:pi_N_Q_graph} that
  all of them may be dominated by constants and depend only on the initial curve.
  \begin{figure}[H]
      \centering
      \includegraphics[keepaspectratio, scale=0.5]{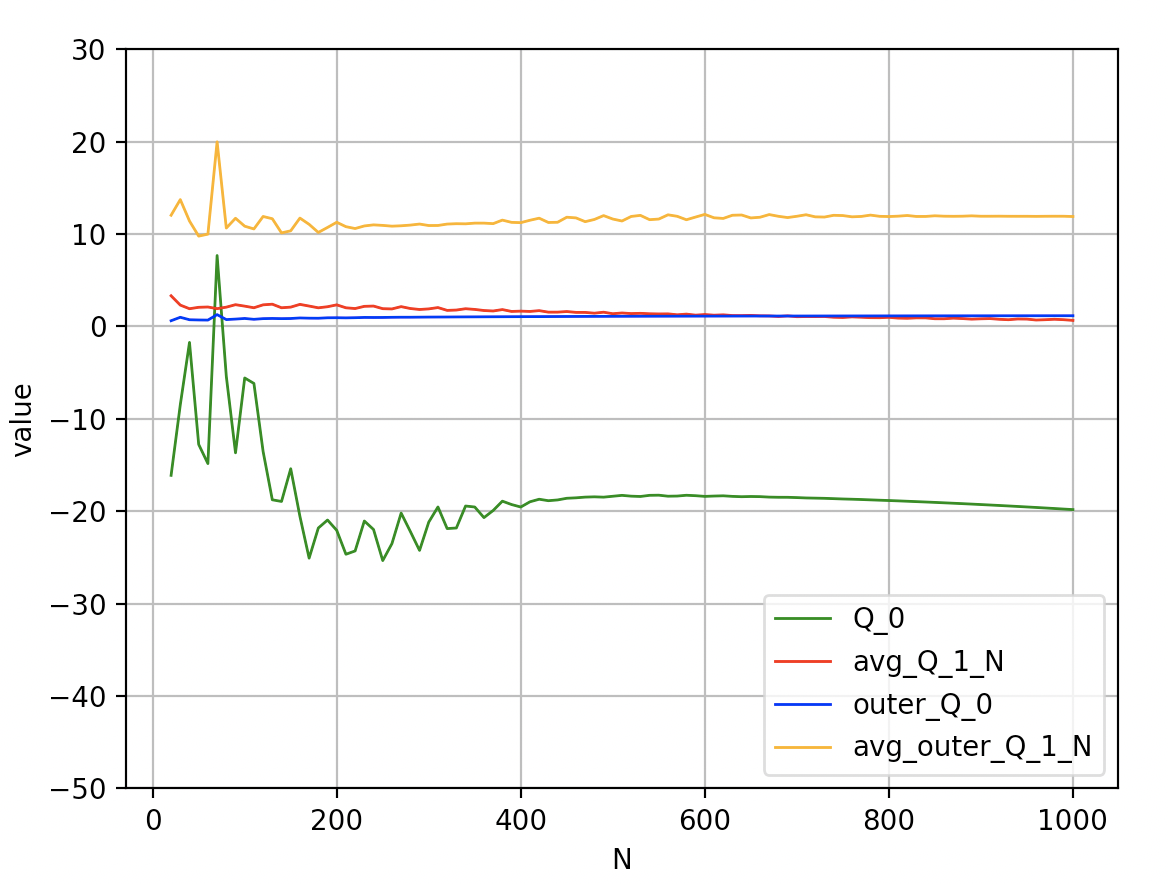}
      \caption{N dependence of coefficients.}\label{fig:pi_N_Q_graph}
  \end{figure}

\subsection{Star}

The star-shaped curve is close to a circle. As expected, this shape is deformed into a circle by our scheme.
See Figure \ref{fig:star} for a numerical result.
\begin{figure}[H]
    \begin{minipage}[b]{0.5\linewidth}
      \centering
      \includegraphics[keepaspectratio, scale=0.25]{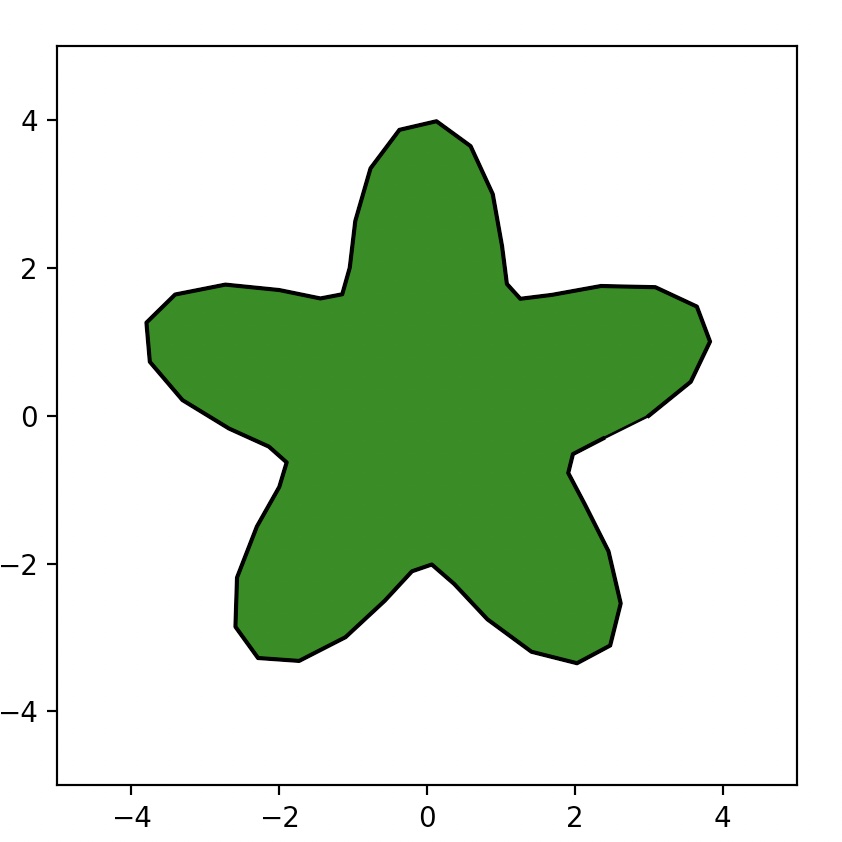}
      \subcaption{t = 0}\label{fig:star_0_0}
    \end{minipage}
    \begin{minipage}[b]{0.5\linewidth}
      \centering
      \includegraphics[keepaspectratio, scale=0.25]{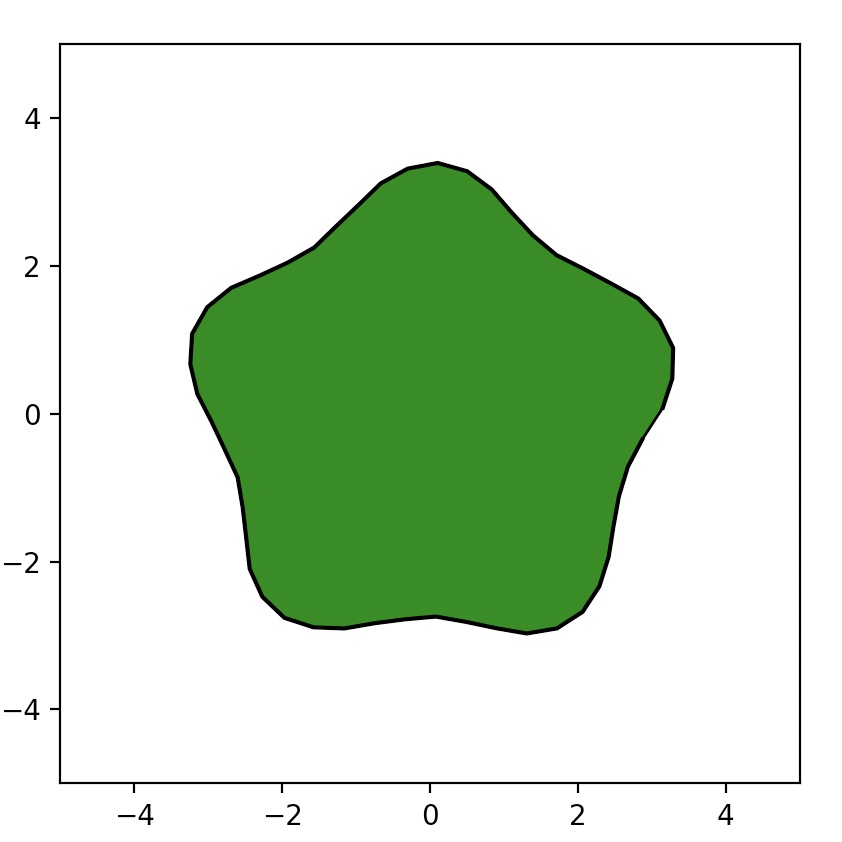}
      \subcaption{\Erase{t=0.01}\Add{t=0.05}}\label{fig:star_1}
    \end{minipage} \\
    \begin{minipage}[b]{0.5\linewidth}
      \centering
      \includegraphics[keepaspectratio, scale=0.25]{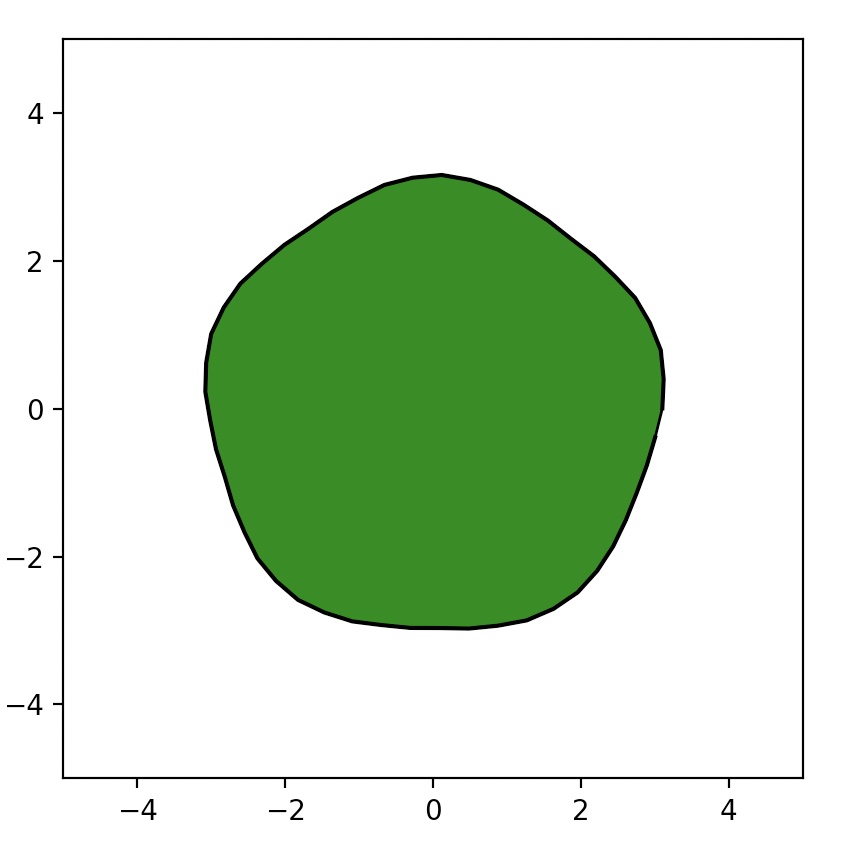}
      \subcaption{\Erase{t=0.03}\Add{t=0.1}}\label{fig:star_3}
    \end{minipage}
    \begin{minipage}[b]{0.5\linewidth}
      \centering
      \includegraphics[keepaspectratio, scale=0.25]{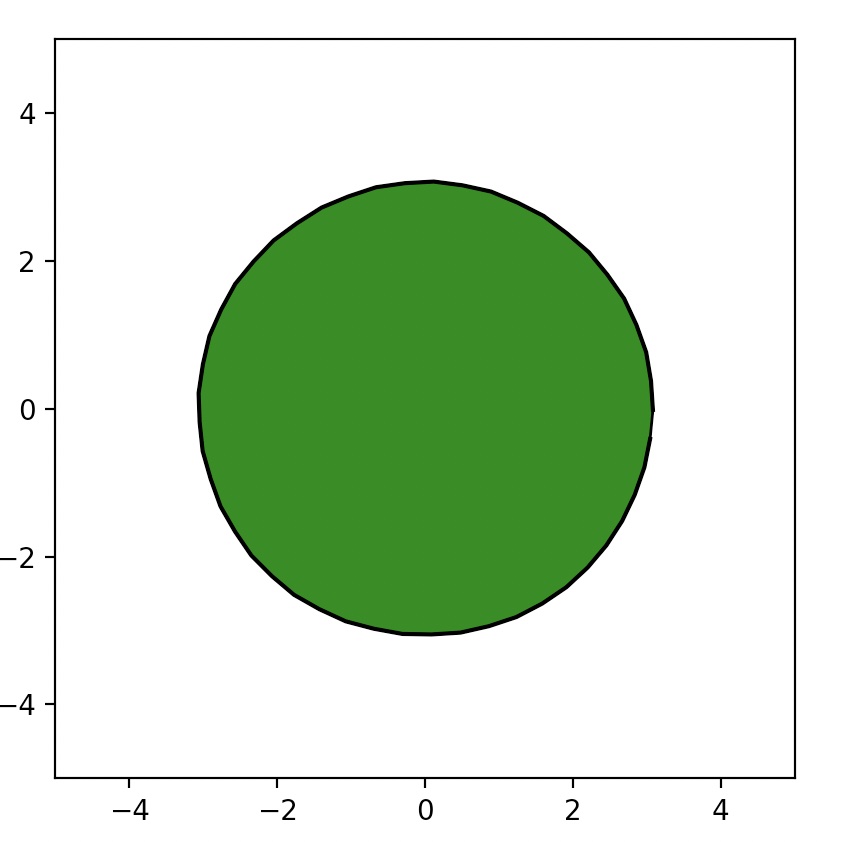}
      \subcaption{\Erase{t=0.05}\Add{t=0.2}}\label{fig:star_5}
    \end{minipage}
    \caption{Evolution of Star shaped curve. $N=50, \Delta t=0.1N^{-2}$.}\label{fig:star}
  \end{figure}
\subsection{Tube}
Mayer \cite{Soni1997TwosidedMF} showed that the one-sided Mullins-Sekerka flow (Hele-Shaw flow) does not necessarily preserve the convexity of the initial curve.
This is a unique feature of the Mullins-Sekerka flow. This fact was also numerically observed by Bates et al. \cite{Bates1995ANS} provided that 
an initial curve is a tube with two circular end caps, even in the 2-phase Mullins-Sekerka flow. We shall confirm that our scheme yields the same result as well.
We set $N = 50, \Delta t = 0.1N^{-2}$. In addition, assume that the length and thickness of the tube are $8.0$ and $1.0$, respectively.
\begin{figure}[H]
    \begin{minipage}[b]{0.5\linewidth}
      \centering
      \includegraphics[keepaspectratio, scale=0.2]{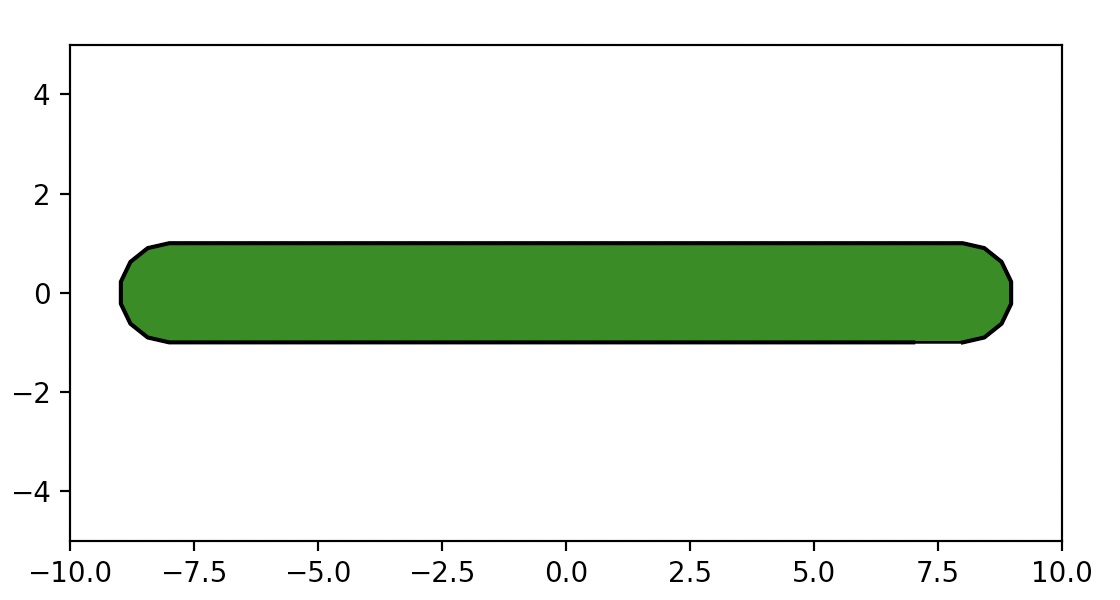}
      \subcaption{t = 0}\label{fig:tube_0}
    \end{minipage}
    \begin{minipage}[b]{0.5\linewidth}
      \centering
      \includegraphics[keepaspectratio, scale=0.2]{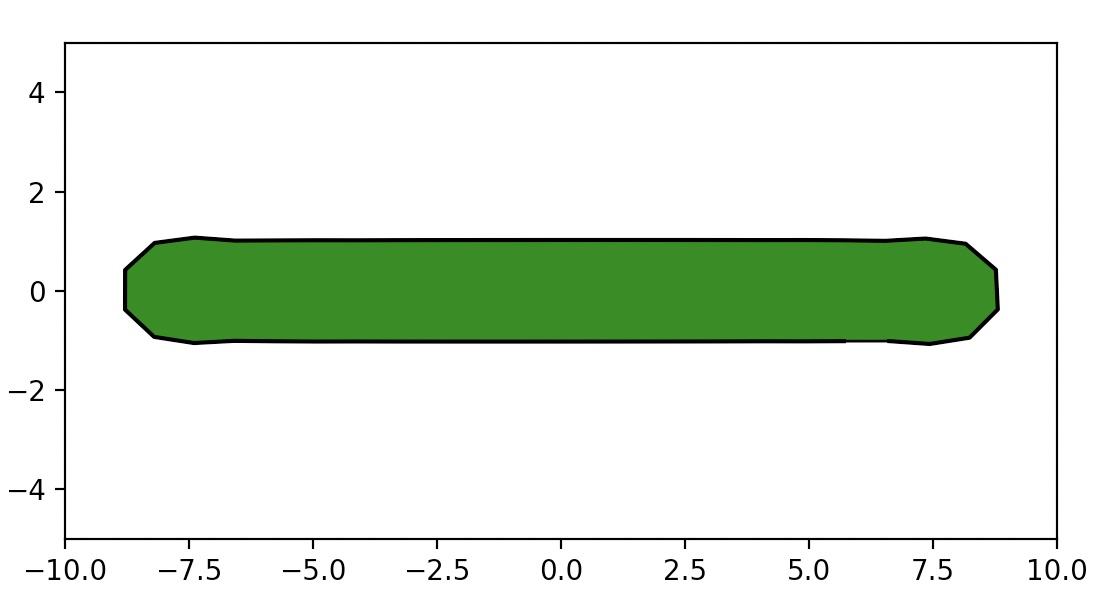}
      \subcaption{t = 0.01}\label{fig:tube_1000}
    \end{minipage} \\
    \begin{minipage}[b]{0.5\linewidth}
      \centering
      \includegraphics[keepaspectratio, scale=0.2]{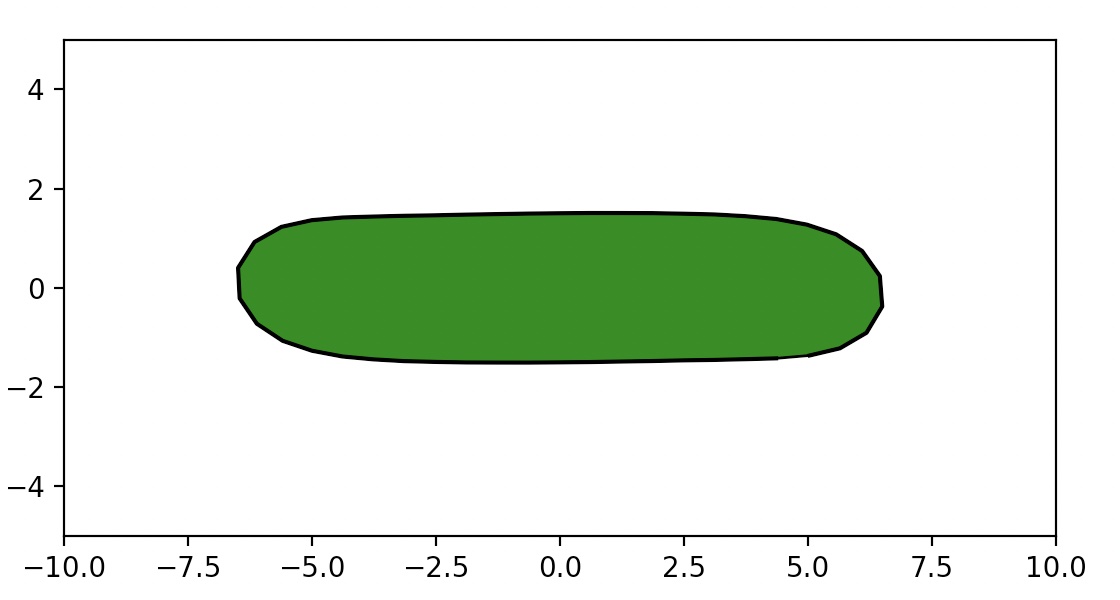}
      \subcaption{\Erase{t=0.025}\Add{t = 0.2}}\label{fig:tube_2500}
    \end{minipage}
    \begin{minipage}[b]{0.5\linewidth}
      \centering
      \includegraphics[keepaspectratio, scale=0.2]{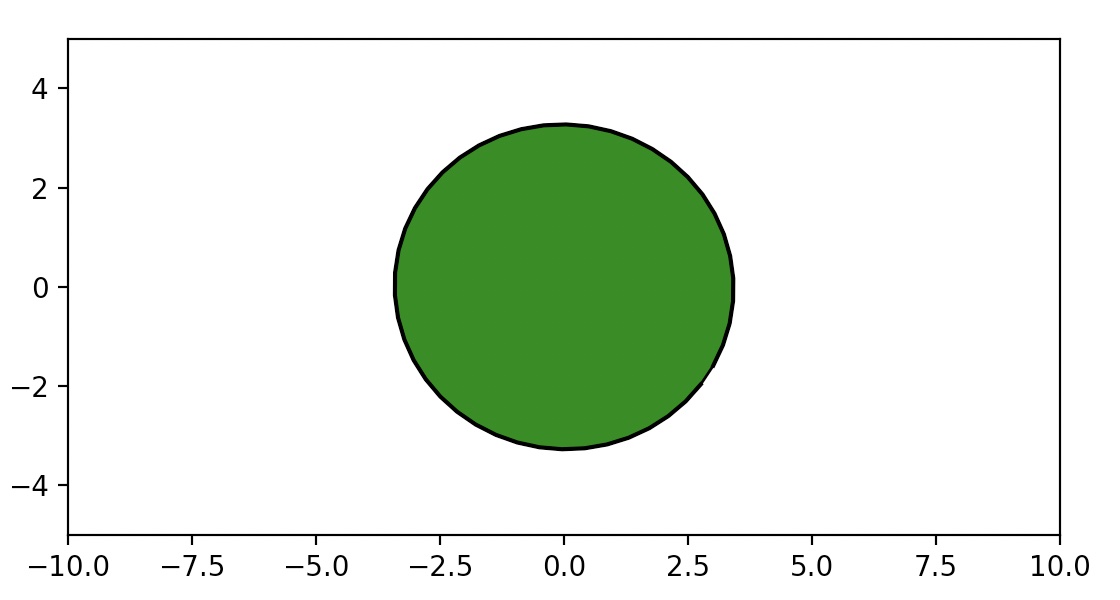}
      \subcaption{\Erase{t=0.07}\Add{t = 1.6}}\label{fig:tube_7000}
    \end{minipage}
    \caption{Evolution of Tube shaped curve.}\label{fig:tube}
  \end{figure}
\subsection{Accuracy of the scheme}
As mentioned in \cite{Bates1995ANS}, two concentric circles and a function $u$ in $\mathbb{R}^2$ are available to comfirm
the accuracy of the numerical scheme. Let us recall the definition of $u$ as follows:
For $0 < R_1 < R_2$ and $x\in\mathbb{R}^2$,
\begin{equation}\label{eq:2ConcentricCircles}
  u(x) := \begin{cases}
    \frac{1}{R_2}\ \ \mbox{if}\ \ |x| \geq R_2,\\
    -\frac{1}{R_1} + \frac{\frac{1}{R_1} + \frac{1}{R_2}}{\log{\frac{R_2}{R_1}}}\log{\frac{|x|}{R_1}}\ \ \mbox{if}\ \ R_1\leq |x|\leq R_2, \\
    -\frac{1}{R_1}\ \ \mbox{if}\ \ |x| \leq R_1.
  \end{cases}
\end{equation}
Then, $u$ is readily harmonic in $\Omega := \{R_1<|x|<R_2\}$ and satisfies the Gibbs-Thomson law on
$\partial B_{R_1}(0)\cup \partial B_{R_2}(0)$. However, we cannot adopt this function as a benchmark for our scheme
because this model can only describe the Hele-Shaw flow, which is one-sided.
Indeed, though the normal derivative of $u$ jumps across $\partial\Omega$,
$u$ is constant in $\mathbb{R}^2\backslash\overline{\Omega}$ so that the jump coincides with $-\nabla u\cdot\nu$ and this case is something special.
Hence, we should prepare a new example of $u$ whose normal derivative does not vanish in both side across the boundary.
To this end, we alternatively consider the following function $u$ and the domains $\Omega_i$ and $\Omega_e$.
Let $0<R_1<R_2<R_3<R$ and we define
\begin{equation}\label{eq:3ConcentricCircles}
  u(x) := \begin{cases}
    \frac{1}{R_3}\ \ \mbox{if}\ \ |x|\geq R_3,\\
    -\frac{1}{R_2} + \frac{\frac{1}{R_2} + \frac{1}{R_3}}{\log{\frac{R_3}{R_2}}}\log{\frac{|x|}{R_2}}\ \ \mbox{if}\ \ R_2\leq |x|\leq R_3,\\
    \frac{1}{R_1} + \frac{-\frac{1}{R_1} - \frac{1}{R_2}}{\log{\frac{R_2}{R_1}}}\log{\frac{|x|}{R_1}}\ \ \mbox{if}\ \ R_1\leq |x|\leq R_2, \\
    \frac{1}{R_1}\ \ \mbox{if}\ \ |x| \leq R_1,
  \end{cases}
\end{equation}
\begin{eqnarray*}
  \Omega_i &:=& \{|x| < R_1\} \cup \{R_2 < |x| < R_3\},\ \ \Omega_e := \mathbb{R}^2\backslash\overline{\Omega_i}.
\end{eqnarray*}
Though the computation will be much more complicated since $\Omega_i$ is not connected similarly to $\Omega_e$, we see that
the normal derivative of $u$ from both sides of across $\partial B_{R_2}(0)\subset\partial\Omega_i$ does not vanish.
Observe that the function $u$ defined by $\eqref{eq:3ConcentricCircles}$ rigorously satisfies 
the equalities $\eqref{eq:StringForm}$ and the radii $R_1, R_2$ and $R_3$ evolve, subject to 
the following system of ordinary differential equations:
\begin{equation}\label{eq:RadiusODESystem}
  \begin{cases}
    \frac{dR_1}{dt} = -\frac{\frac{1}{R_1} + \frac{1}{R_2}}{R_1\log{\frac{R_2}{R_1}}}, \\
    \frac{dR_2}{dt} = -\left(\frac{\frac{1}{R_1} + \frac{1}{R_2}}{R_2\log{\frac{R_2}{R_1}}} + \frac{\frac{1}{R_2} + \frac{1}{R_3}}{R_2\log{\frac{R3}{R_2}}}\right), \\
    \frac{dR_3}{dt} = -\frac{\frac{1}{R_2} + \frac{1}{R_3}}{R_3\log{\frac{R_3}{R_2}}}.
  \end{cases}
\end{equation}
In the sequel, let us confirm that this $u$ can be approximated by the proposed scheme.
First, to estimate the error between the results of the proposed scheme and the rigorous solution, 
we need to derive a numerical solution of $\eqref{eq:RadiusODESystem}$.
To this end, we adopt the fourth-order Runge-Kutta method.
We draw three concentric circles: $C_1$, $C_2$, and $C_3$ 
with radii of $R_1$, $R_2$, and $R_3$, respectively.
Then, we plot $N_1$, $N_2$, and $N_3$ collocation points $\mathbf{X}_{1,1},\cdots,\mathbf{X}_{k,N_k}$ on $C_k$ for $k = 1,2,3$, respectively.
Due to a technical reason, we assume that $N_2 = N_3$. For singular points $y^\pm_{k,j}$ and dummy singular points $z^\pm_{k,j}$, we set:
\begin{multline*}
    y^+_{1,j} := \cpoint{X}{*}{1}{j} + \frac{|\cpoint{X}{}{1}{1}|}{R_1\sqrt{N}}\cpoint{n}{}{1}{j}, \ \ z^+_{1,j} := 1000y_{1,j},
    y^+_{2,j} := \cpoint{X}{*}{2}{j} + \frac{|\cpoint{X}{}{2}{1}|}{R_2\sqrt{N}}\cpoint{n}{}{2}{j}, \ \ z^+_{2,j} := 1000y_{2,j},\\
    y^+_{3,j} := \cpoint{X}{*}{3}{j} + \frac{|\cpoint{X}{}{3}{1}|}{R_3\sqrt{N}}\cpoint{n}{}{3}{j}, \ \ z^+_{3,j} := 1000y_{3,j},
    y^-_{1,j} := \cpoint{X}{*}{1}{j} - \frac{|\cpoint{X}{}{1}{1}|}{R_1\sqrt{N}}\cpoint{n}{}{1}{j}, \ \ y^-_{2,j} := \cpoint{X}{*}{2}{j} - \frac{|\cpoint{X}{}{2}{1}|}{R_2\sqrt{N}}\cpoint{n}{}{2}{j},\\ 
    y^-_{3,j} := \cpoint{X}{*}{3}{j} - \frac{|\cpoint{X}{}{3}{1}|}{R_3\sqrt{N}}\cpoint{n}{}{3}{j}, \ \ z^-_{1,j} := \frac{1}{2}y^-_{1,j},\ \ z^-_{2,j} = z^-_{3,j} := \frac{1}{2}(y^-_{2,j} + y^-_{3,j}).
\end{multline*}

The quantity $d$ that indicates the distance of each charge point $y^\pm_{k,j}$ from the edge $[\cpoint{X}{}{k}{j-1},\cpoint{X}{}{k}{j}]$ is set to $\frac{1}{\sqrt{N}}$ universally.
However, we adjust this quantity by multiplying it with a scaling parameter. 
This scaling will stabilize the position of each charge point, even if the length of the edge $[\cpoint{X}{}{k}{j-1},\cpoint{X}{}{k}{j}]$
becomes quite small. Note that this modification is unnecessary if the domain approximated by a polygon is connected because 
the shape of the domain converges to a circle in time and the length of the edge is bounded below by a positive constant.
In this experiment, the radii are chosen as $R_1 := 1.0$, $R_2 := 5.0$, and $R_3 := 10.0$.
Solving $\eqref{eq:RadiusODESystem}$ numerically for these parameters shows that the circle $C_1$ vanishes at approximately $t = 0.56$, and a topological change occurs.
However, if $N$ is too small, then the error between the approximate solution and the rigorous one is not neglectable, and we cannot 
proceed the scheme until $t=0.56$. This difficulty may stem from observation that the approximate solution $U^+$ should be constant inside $C_1$ although this cannot be expected
due to the structure of the approximate solution. To avoid this problem, we calculate $R_1'(t)$ and $R_3'(t)$ in terms of the system $\eqref{eq:RadiusODESystem}$, whereas 
we approximate $R_2'(t)$ by the proposed scheme. To be more accurate, we apply the fourth-order Runge-Kutta method 
to obtain $R_1(t)$, $R_2(t)$, and $R_3(t)$ for each time-step. To reduce the calculation load, we skip the step of the UDM. 
In other words, we always assume that the tangential velocity equals zero. Being optimistic, it is worth 
to consider the normal velocity because the concentric circles are symmetric with respect to the origin.
In this way, we can proceed with the scheme and measure the error of $R_2$ until a topological change occurs.

\begin{figure}[H]
    \centering
    \includegraphics[keepaspectratio, scale=0.5]{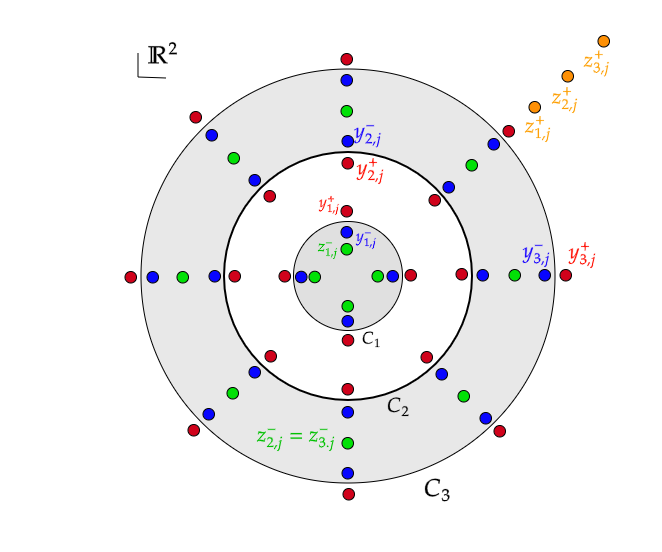}
    \caption{$3$ concentric circles and the position of charge points.}\label{fig:3ConcentricCircles}
\end{figure}
Now, let us explain a method to measure the efficiency of our scheme. 
We fix $N_1 := 10$, change $N := N_2 = N_3$ and compare the results of computation.
Let $\operatorname{Err}(N)$ denote the quantity used to measure the efficiency of our scheme. This quantity should equal zero
when the approximate solution corresponds entirely to a rigorous solution. 
Suppose that the approximate solution obtained by our scheme converges to a rigorous solution
in the sense that $\lim_{N\to\infty}\frac{\operatorname{Err}(N)}{N^r} = C > 0$ for some $r < 0$.
Then, since it also holds that $\lim_{N\to\infty}\frac{\operatorname{Err}(\alpha N)}{(\alpha N)^r} = C$ for every $\alpha > 1$,
we have $\lim_{N\to\infty}\frac{\operatorname{Err}(N)}{\operatorname{Err}(\alpha N)} = \alpha^{-r}$.
Taking the logarithm with a base of $\alpha$ shows
\begin{equation}\label{eq:EOC}
  \lim_{N\to\infty}\log_\alpha{\frac{\operatorname{Err}(\alpha N)}{\operatorname{Err}(N)}} = r.
\end{equation}
This $r<0$ is often called the experimental order of convergence (EOC).
The quantity $\eqref{eq:EOC}$ was utilized in Eq. (3.1) \cite{RiceMu1988} to present an
experimental performance analysis of a PDE-solving system called ELLPACK.

Let us define $\operatorname{Err}(N)$ in our setting. We implement the time-discrete evolution of polygons
until $k\Delta t > 0.5$ holds. Let $M$ be the number of implementations and set $T := M\Delta t$ where $\Delta t := 0.1N^{-2}$ as usual.
Then, we define $\operatorname{Err}(N)$ by
\begin{equation*}
  \operatorname{Err}(N) := \frac{1}{M\Delta t}\sum_{k = 1}^M \frac{|\hat{R}_2(k) - R_2(k)|}{\hat{R}_2(k)}\Delta t \approx \frac{1}{T}\int_0^T\frac{|\hat{R}_2(t) - R_2(t)|}{\hat{R}_2(t)}dt,
\end{equation*}
where $R_2(k) := |\cpoint{X}{k}{2}{1}|$ and $\hat{R}_2(t)$ denotes the rigorous solution of $\eqref{eq:RadiusODESystem}$.
This quantity is nothing but the mean value of the relative error between the radii of the approximate solution and the rigorous one.
In terms of this error, let us summarize the result of the experiment.
\begin{table}[h]
  \caption{EOC with respect to $N$.}
  \label{table:EOC_wrt_N}
  \centering
  \begin{tabular}{ccccc}
    $N$ & $\alpha$ & $\Delta t$ & iteration & EOC \\
    \hline\hline
    $50$ & $1.0$ & $8.2\times 10^{-6}$ & $66599$ & N/A \\
    $60$ & $1.2$ & $5.9\times 10^{-6}$ & $93265$ & -0.7104746918300522\\
    $75$ & $1.25$ & $3.9\times 10^{-6}$ & $141604$ & -0.5791828411968422\\
    $100$ & $1.3$ & $2.2\times 10^{-6}$ & $244574$ & -0.5892795844583173\\
    $130$ & $1.3$ & $1.3\times 10^{-6}$ & $405035$ & -0.6569451784970269\\
  \end{tabular}
\end{table}

\begin{figure}[H]
    \centering
    \includegraphics[keepaspectratio, scale=0.5]{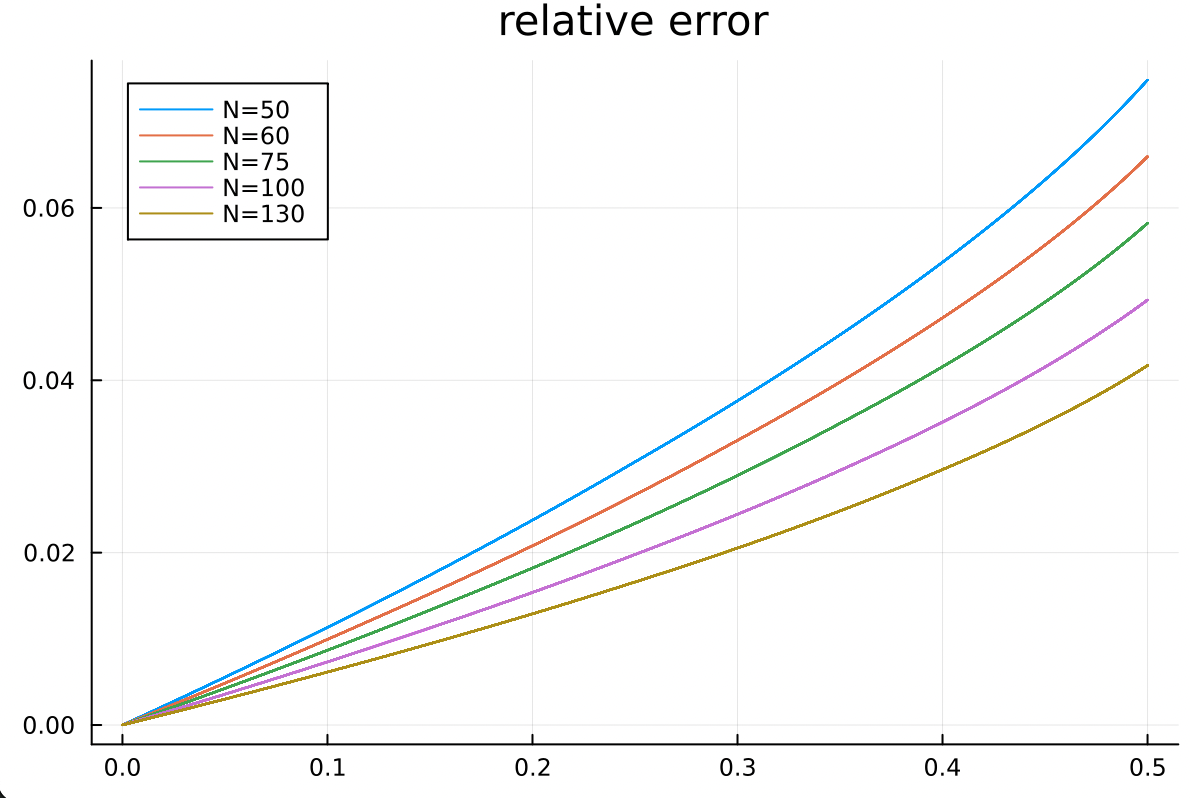}
    \caption{Relative error of $R_2(t)$ for each $N$.}\label{fig:3ConcentricCirclesRelativeError}
\end{figure}

From this result, we can expect the convergence rate of the present scheme to be at least $O\left(\frac{1}{\sqrt{N}}\right)$ as $N\to\infty$.

\subsection{Disappearance of particles}
Bates et al. \cite{Bates1995ANS} treated a case where there are several particles in $\mathbb{R}^2$.
According to \cite{Bates1995ANS}, if the particles are disjoint circles, then the largest circle grows, and 
smaller particles shrink and eventually disappear. See Figure 5 \cite{Garcke2013CurvatureDI} for a 3D case. 
The proposed scheme can also be applied to such cases. In order to observe this phenomenon, 
we prepare four circles in $\mathbb{R}^2$ imitated by $20$ regular polygons, namely, we set $N = 20*4 = 80$.
While evolving, the circles will be removed from the target of the numerical calculation when their area becomes smaller than a prescribed setting. 
\begin{figure}[H]
    \begin{minipage}[b]{0.3\linewidth}
      \centering
      \includegraphics[keepaspectratio, scale=0.3]{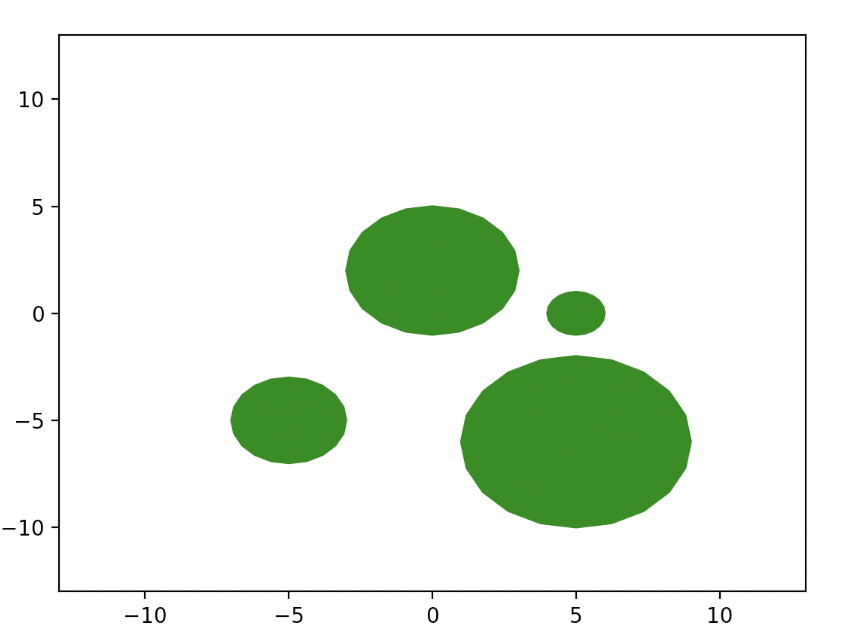}
      \subcaption{step1.}\label{fig:mp_1}
    \end{minipage}
    \begin{minipage}[b]{0.3\linewidth}
      \centering
      \includegraphics[keepaspectratio, scale=0.3]{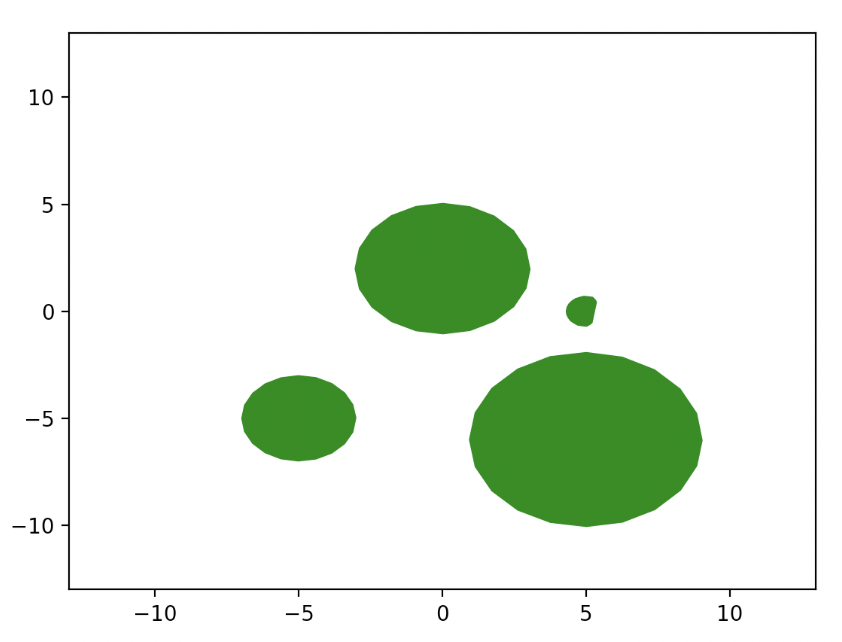}
      \subcaption{step2.}\label{fig:mp_2}
    \end{minipage}
    \begin{minipage}[b]{0.3\linewidth}
      \centering
      \includegraphics[keepaspectratio, scale=0.3]{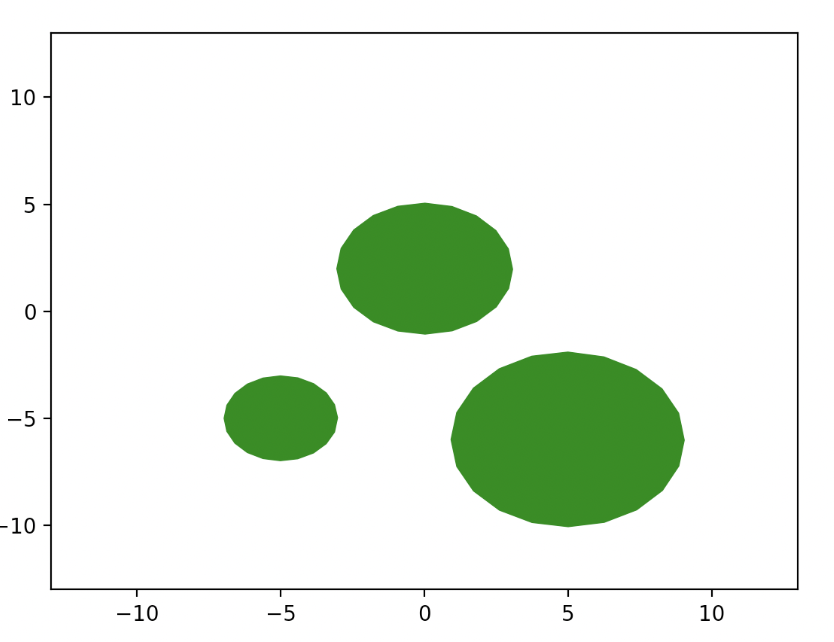}
      \subcaption{step3.}\label{fig:mp_3}
    \end{minipage} \\
    \begin{minipage}[b]{0.3\linewidth}
      \centering
      \includegraphics[keepaspectratio, scale=0.3]{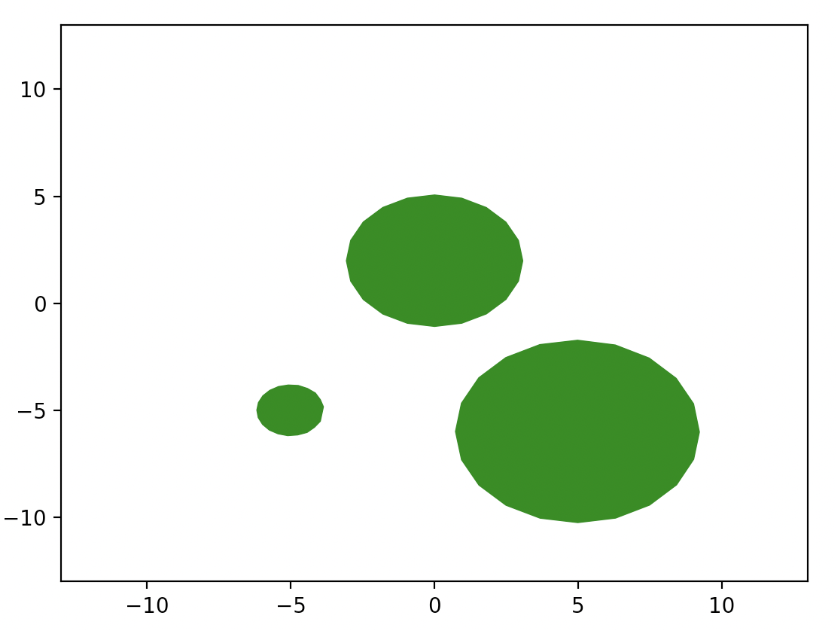}
      \subcaption{step4.}\label{fig:mp_4}
    \end{minipage}
    \begin{minipage}[b]{0.3\linewidth}
      \centering
      \includegraphics[keepaspectratio, scale=0.3]{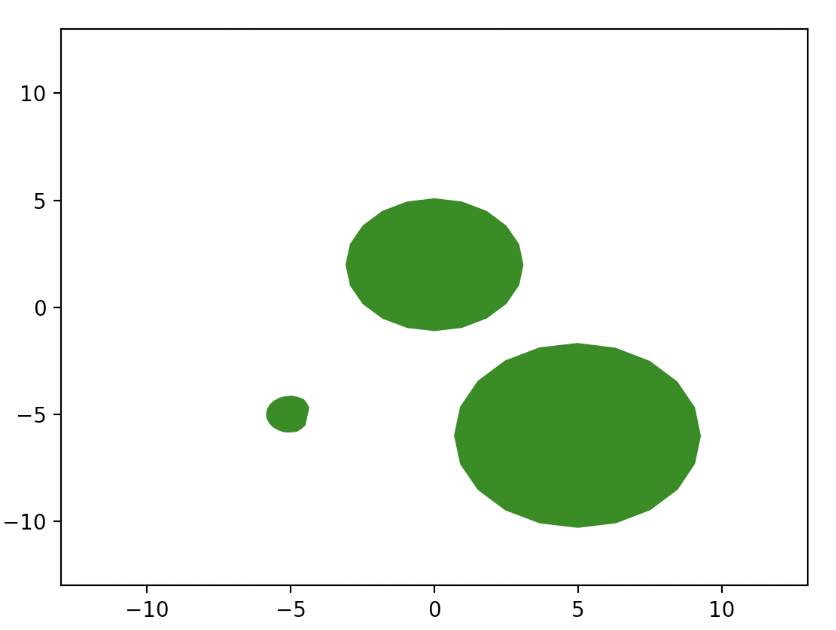}
      \subcaption{step5.}\label{fig:mp_5}
    \end{minipage}
    \begin{minipage}[b]{0.3\linewidth}
      \centering
      \includegraphics[keepaspectratio, scale=0.3]{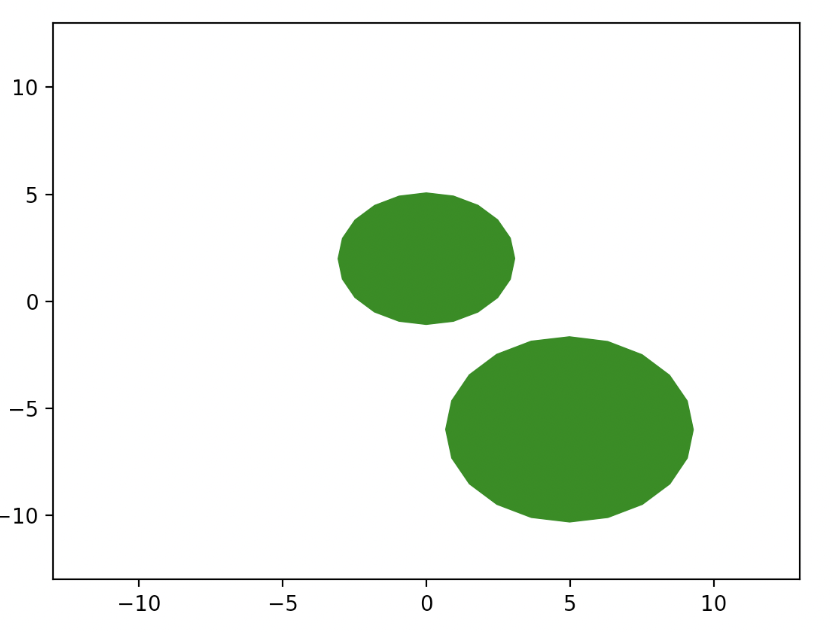}
      \subcaption{step6.}\label{fig:mp_6}
    \end{minipage} \\
    \begin{minipage}[b]{0.3\linewidth}
      \centering
      \includegraphics[keepaspectratio, scale=0.3]{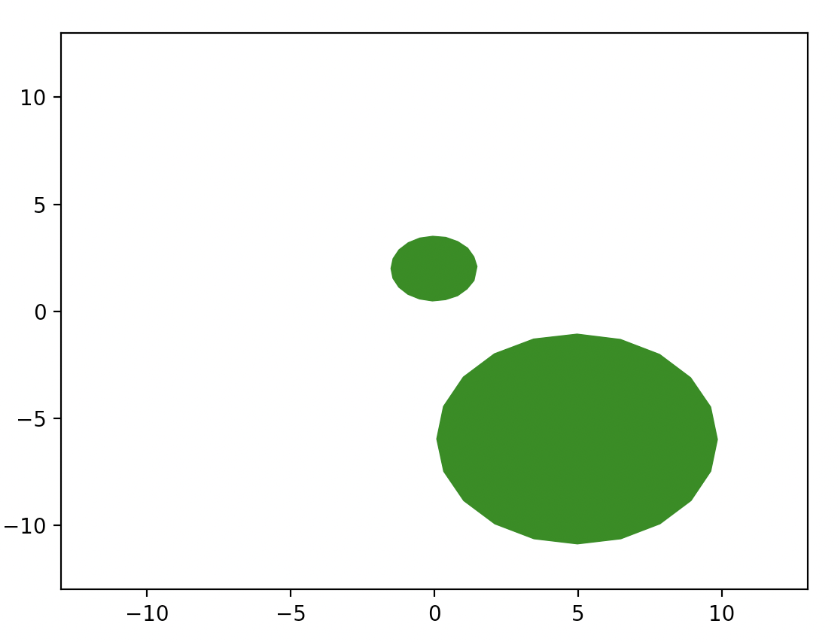}
      \subcaption{step7.}\label{fig:mp_7}
    \end{minipage}
    \begin{minipage}[b]{0.3\linewidth}
      \centering
      \includegraphics[keepaspectratio, scale=0.3]{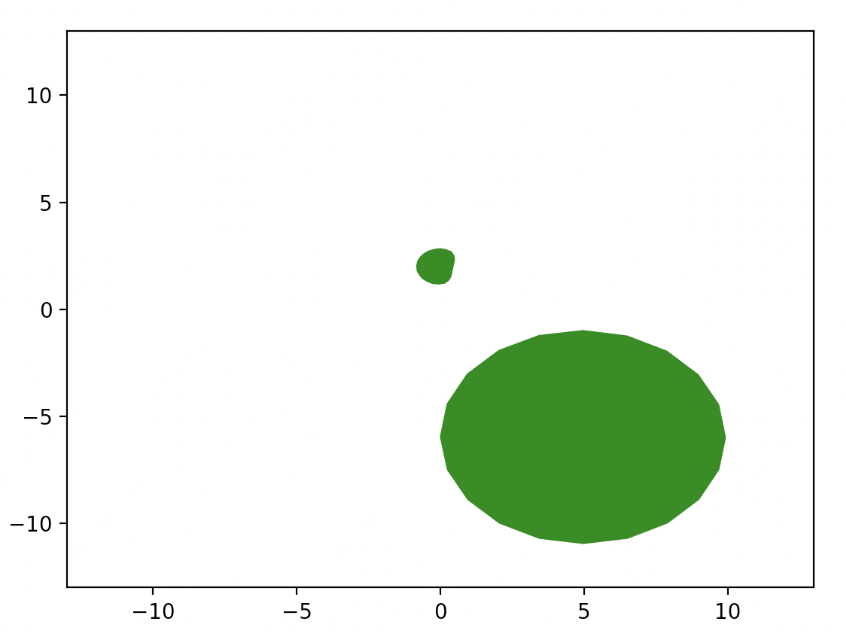}
      \subcaption{step8.}\label{fig:mp_8}
    \end{minipage}
    \begin{minipage}[b]{0.3\linewidth}
      \centering
      \includegraphics[keepaspectratio, scale=0.3]{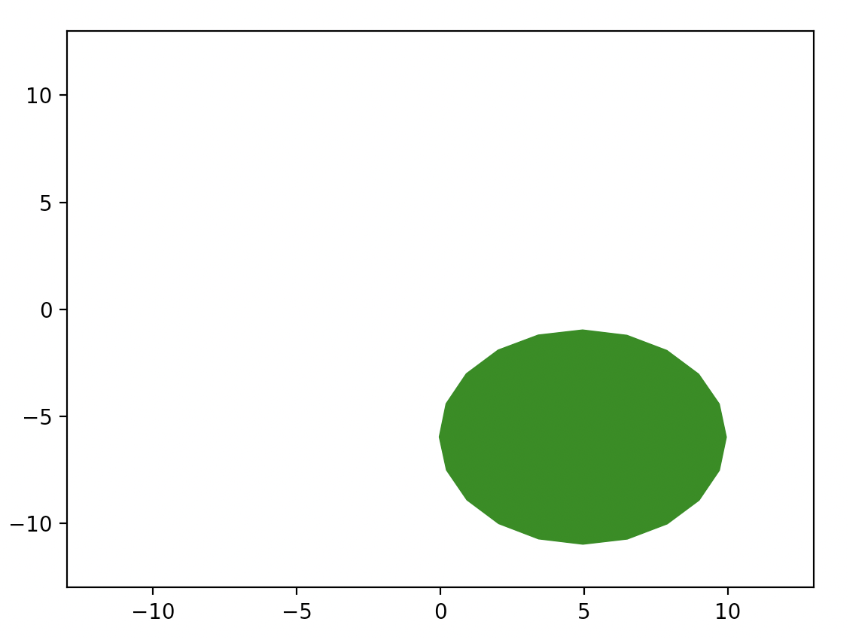}
      \subcaption{step9.}\label{fig:mp_9}
    \end{minipage} \\
    \caption{Evolution of multi particles.}\label{fig:mp}
  \end{figure}
  \begin{figure}[H]
      \centering
      \includegraphics[keepaspectratio, scale=0.5]{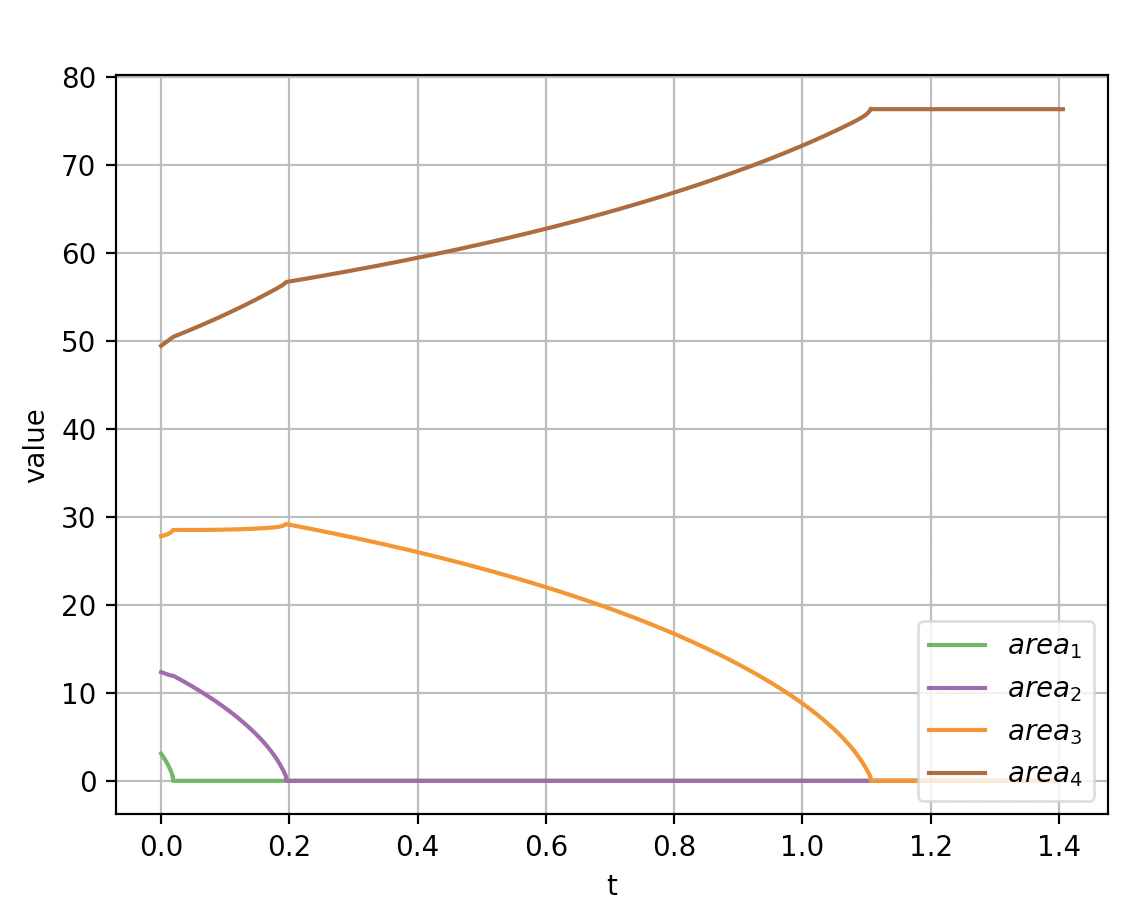}
      \caption{Evolution of areas of each particle}\label{fig:mp_metrics}
  \end{figure}
\subsection{Coalescence of particles}\label{sec:CoP}

Setting the initial datum as two ovals and placing them closely, we can observe the coalescence of particles as shown in Figure 8 \cite{Bates1995ANS}.
This type of phenomenon was rigorously formulated by R\"{o}ger \cite{Rger2005ExistenceOW} with the aid of the notion of varifolds.
He was successful to establish the existence of a weak solution to the Mullins-Sekerka problem without the assumption that
the loss of area never happens. However, there is no uniqueness result of the weak solution.
In this experiment, we check the distance between the collocation points of the ovals step-by-step.
If the distance becomes lower than a prescribed value, then we remove the collocation points and their neighboring collocation points from the ovals.
Second, we artificially connect the ovals and regard them as one polygon.
Each oval has $42$ collocation points, that is, $N = 42*2 = 84$. 
For the numerical result, see Figure \ref{fig:oval}.

\begin{figure}[H]
    \begin{minipage}[b]{0.5\linewidth}
      \centering
      \includegraphics[keepaspectratio, scale=0.25]{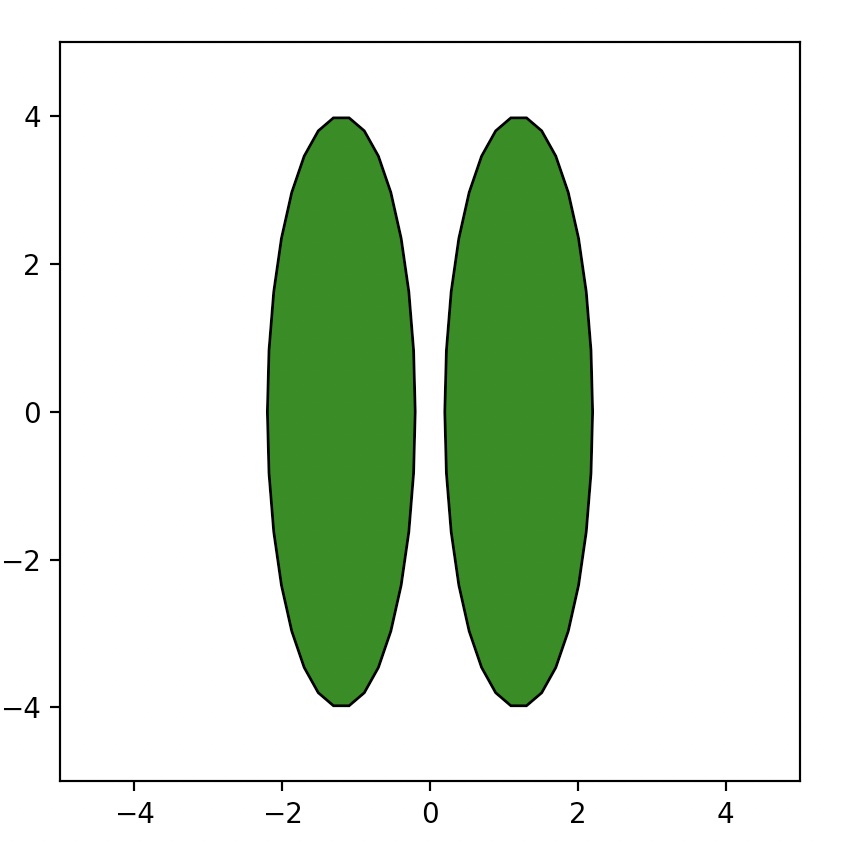}
      \subcaption{\Erase{step1.}\Add{$t = 0.0$}}\label{fig:oval_1}
    \end{minipage}
    \begin{minipage}[b]{0.5\linewidth}
      \centering
      \includegraphics[keepaspectratio, scale=0.25]{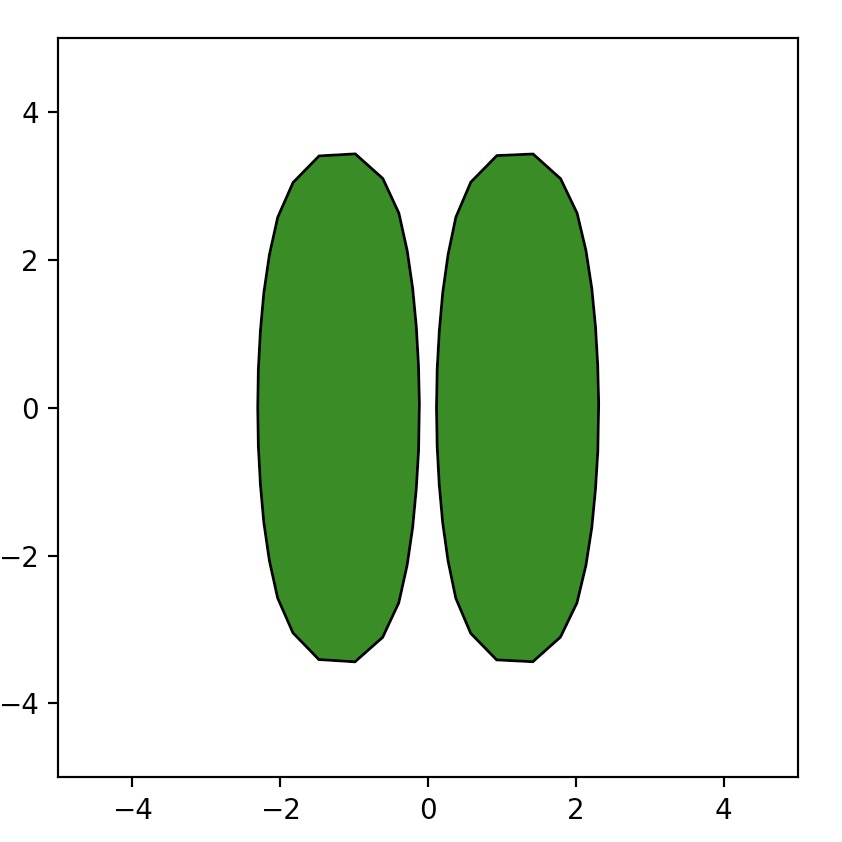}
      \subcaption{\Erase{step2.}\Add{$t = 0.01$}}\label{fig:oval_2}
    \end{minipage} \\
    \begin{minipage}[b]{0.5\linewidth}
      \centering
      \includegraphics[keepaspectratio, scale=0.25]{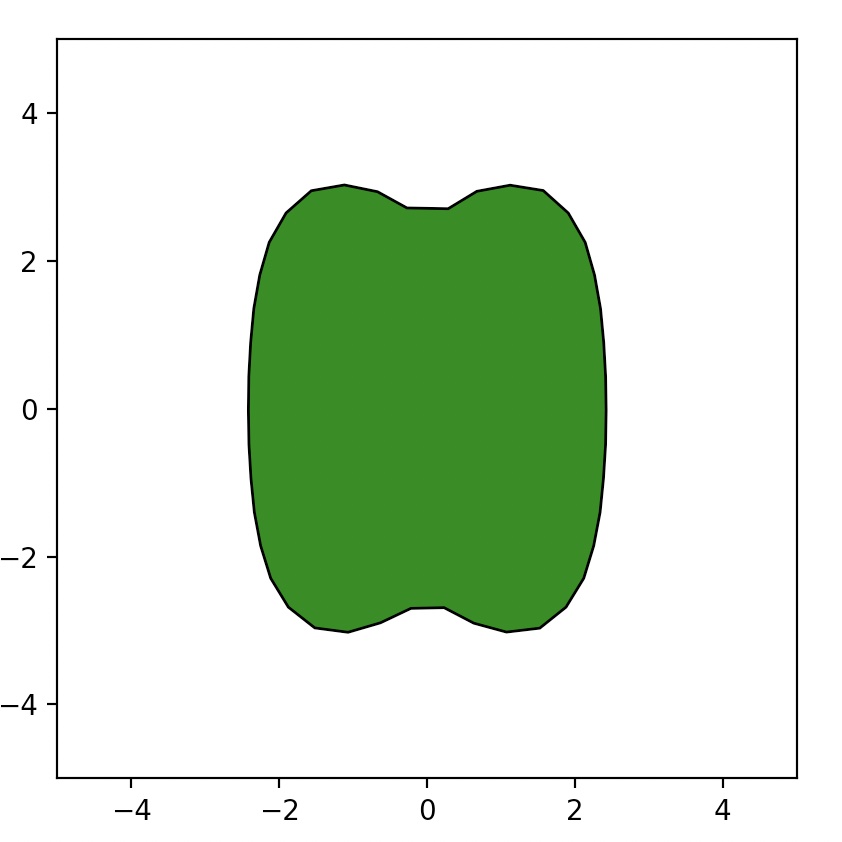}
      \subcaption{\Erase{step4.}\Add{$t = 0.03$}}\label{fig:oval_4}
    \end{minipage}
    \begin{minipage}[b]{0.5\linewidth}
      \centering
      \includegraphics[keepaspectratio, scale=0.25]{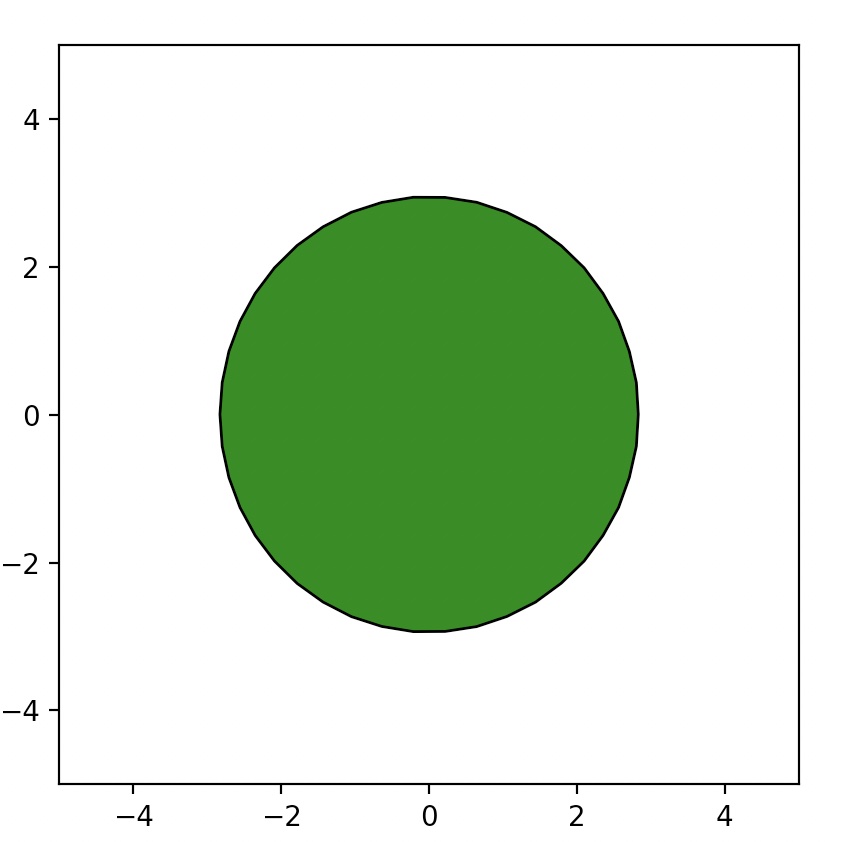}
      \subcaption{\Erase{step5.}\Add{$t = 0.18$}}\label{fig:oval_5}
    \end{minipage}
    \caption{Coalescence of particles.}\label{fig:oval}
  \end{figure}
\subsection{Dumbbell}
When the initial data is a dumbbell, which is the sum set of two circles connected by a narrow bar,
a pinch off phenomenon may occur. For example, let $r,l$ and $b$ be the circle's radius ,the bar's length,
and the thickness, respectively, with $r := 30, l := 5, b := 0.25$.
Then, the thickness of the dumbbell decreases in time and eventually pinches off. 
In this case, we remove the vertices of the polygon when the bar thickness becomes smaller than the prescribed value. 
See Figure \ref{fig:pinchoff}.

\begin{figure}[H]
    \begin{minipage}[b]{0.3\linewidth}
      \centering
      \includegraphics[keepaspectratio, scale=0.3]{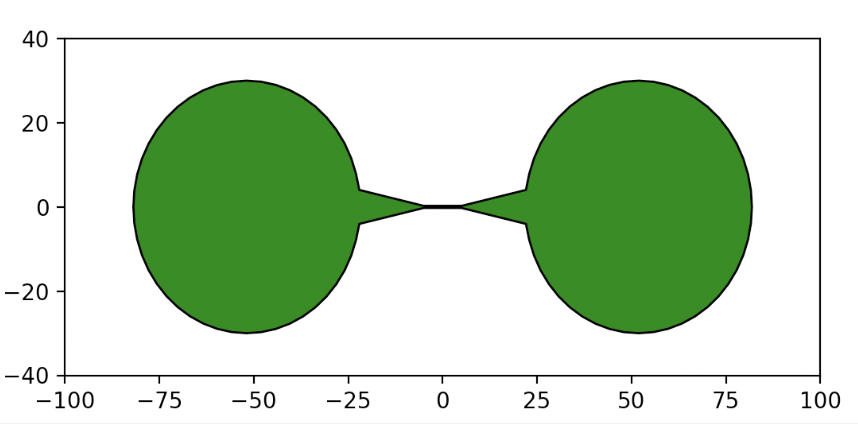}
      \subcaption{step1.}\label{fig:pinchoff_1}
    \end{minipage}
    \begin{minipage}[b]{0.3\linewidth}
      \centering
      \includegraphics[keepaspectratio, scale=0.3]{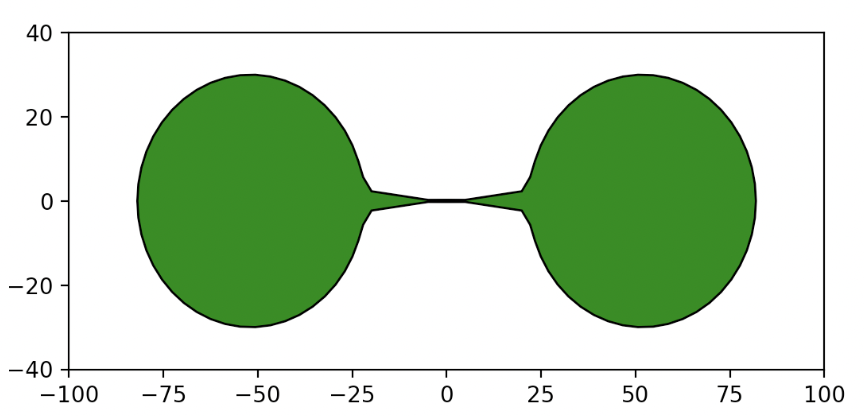}
      \subcaption{step2.}\label{fig:pinchoff_2}
    \end{minipage}
    \begin{minipage}[b]{0.3\linewidth}
      \centering
      \includegraphics[keepaspectratio, scale=0.3]{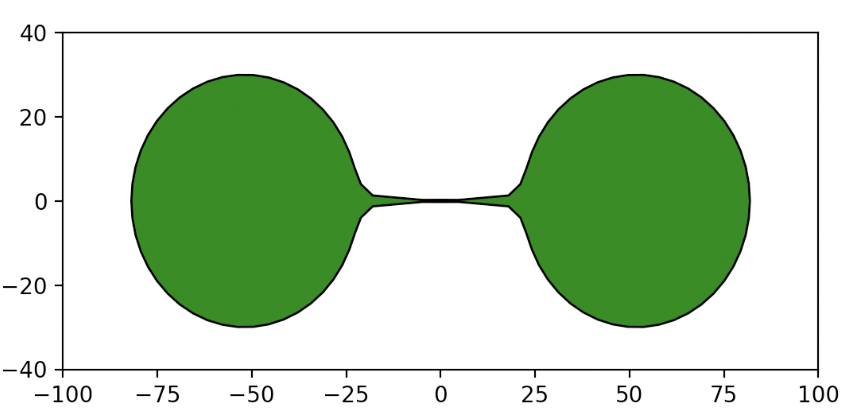}
      \subcaption{step3.}\label{fig:pinchoff_3}
    \end{minipage} \\
    \begin{minipage}[b]{0.3\linewidth}
      \centering
      \includegraphics[keepaspectratio, scale=0.3]{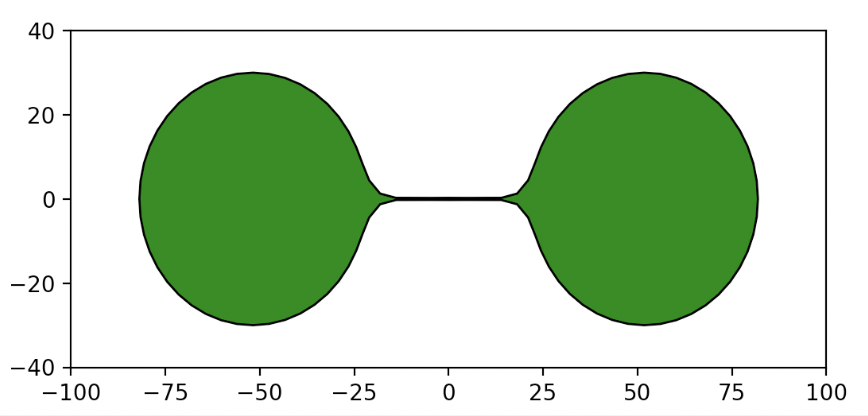}
      \subcaption{step4.}\label{fig:pinchoff_4}
    \end{minipage}
    \begin{minipage}[b]{0.3\linewidth}
      \centering
      \includegraphics[keepaspectratio, scale=0.3]{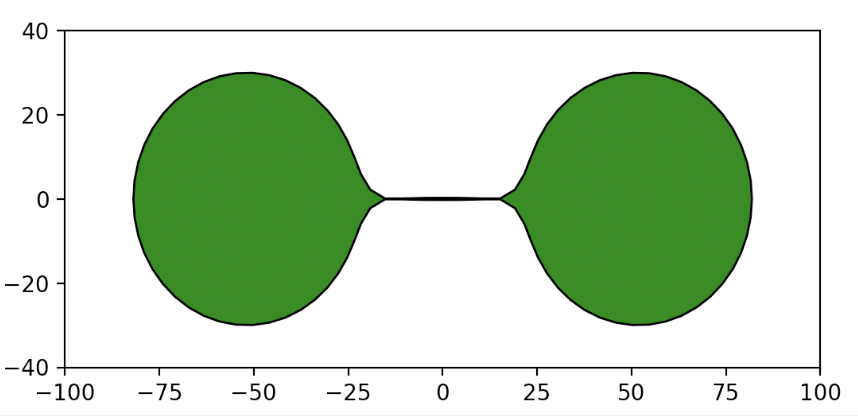}
      \subcaption{step5.}\label{fig:pinchoff_5}
    \end{minipage}
    \begin{minipage}[b]{0.3\linewidth}
      \centering
      \includegraphics[keepaspectratio, scale=0.3]{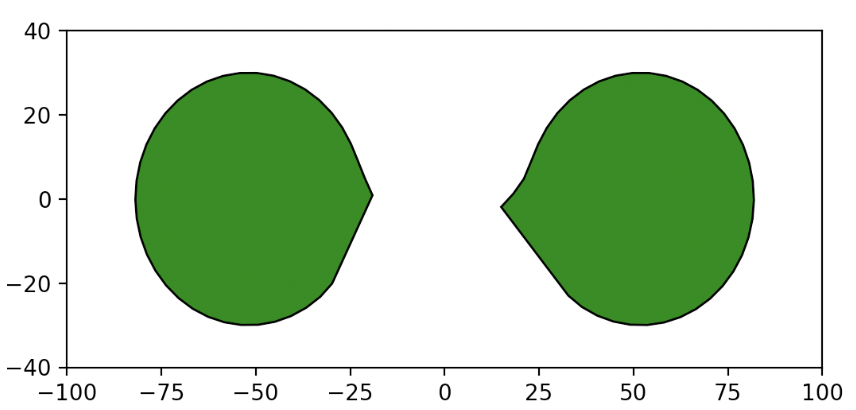}
      \subcaption{step6.}\label{fig:pinchoff_6}
    \end{minipage}
    \caption{Pinch off of a dumbbell.}\label{fig:pinchoff}
  \end{figure}

To avoid this phenomenon, 
we adopt a more general form of the Mullins-Sekerka equation.
We consider the following problem instead of $\eqref{eq:Mullins_Sekerka_intro}$.
\begin{equation}\label{eq:Mullins_Sekerka_coefficient}
    \begin{cases}
        \Delta u = 0\ \ \mbox{in}\ \ \mathbb{R}^2\backslash\Gamma_t, \\
        u = \kappa\ \ \mbox{on}\ \ \Gamma_t, \\
        \nabla u = O\left(\frac{1}{|x|^2}\right)\ \ \mbox{as}\ \ |x|\longrightarrow\infty, \\
        V = \sigma_e\frac{\partial u^e}{\partial\mathbf{n}} - \sigma_i\frac{\partial u^i}{\partial\mathbf{n}}\ \ \mbox{on}\ \ \Gamma_t.
    \end{cases}
\end{equation}

Here, $\sigma_e$ and $\sigma_i$ are positive constants that are generally different.
These quantities designate the heat-diffusion efficiency of the two-phases.
Observe that the area $\mathfrak{A}_t$ is conserved, and the length $\mathfrak{L}_t$ is decreasing in time
even if $\sigma_e\neq \sigma_i$.
For this type of presentation of the Mullins-Sekerka equation, see P.558 \cite{PrussAndSimonett} for instance.
We set $\sigma_e := 1000, \sigma_i := 1$ to prevent the dumbbell from pinching off. See Figure \ref{fig:nopinchoff}.

\begin{figure}[H]
    \begin{minipage}[b]{0.3\linewidth}
      \centering
      \includegraphics[keepaspectratio, scale=0.3]{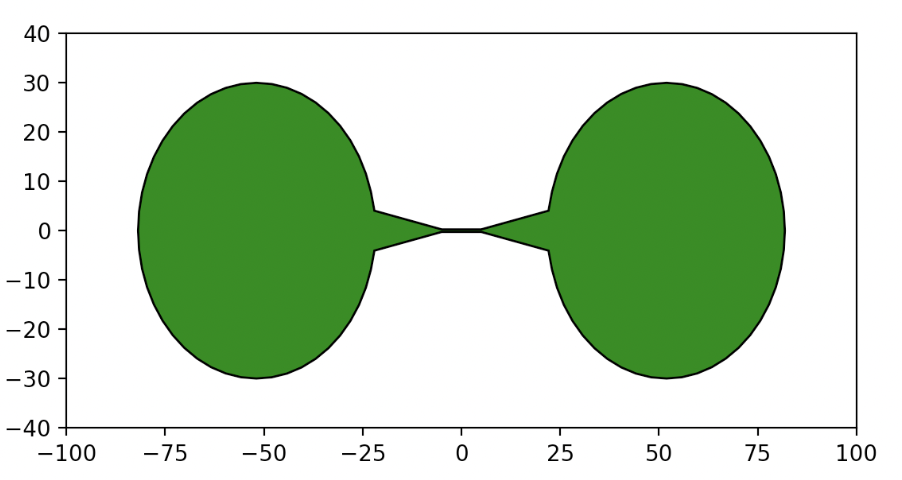}
      \subcaption{step1.}\label{fig:nopf_1}
    \end{minipage}
    \begin{minipage}[b]{0.3\linewidth}
      \centering
      \includegraphics[keepaspectratio, scale=0.3]{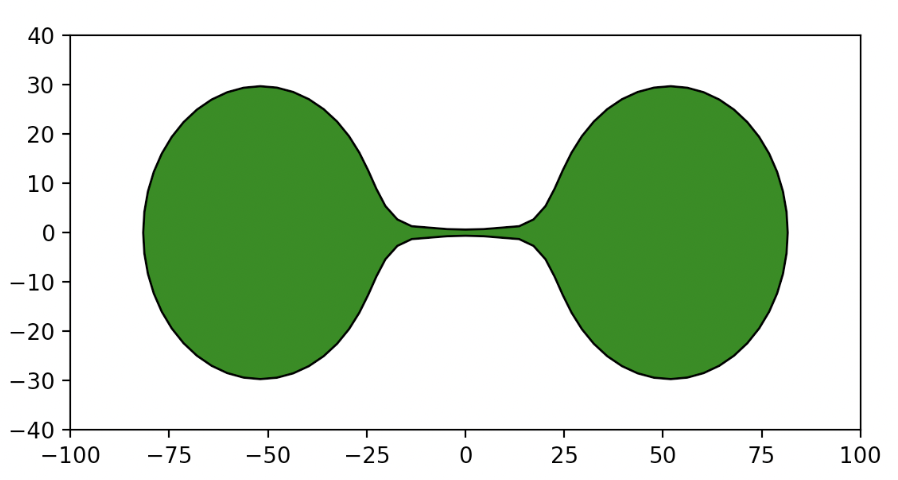}
      \subcaption{step2.}\label{fig:nopf_2}
    \end{minipage}
    \begin{minipage}[b]{0.3\linewidth}
      \centering
      \includegraphics[keepaspectratio, scale=0.3]{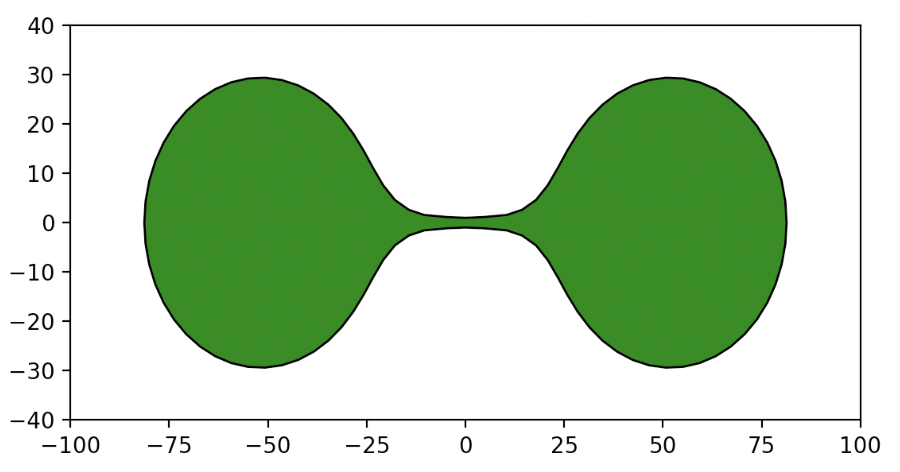}
      \subcaption{step3.}\label{fig:nopf_3}
    \end{minipage} \\
    \begin{minipage}[b]{0.3\linewidth}
      \centering
      \includegraphics[keepaspectratio, scale=0.3]{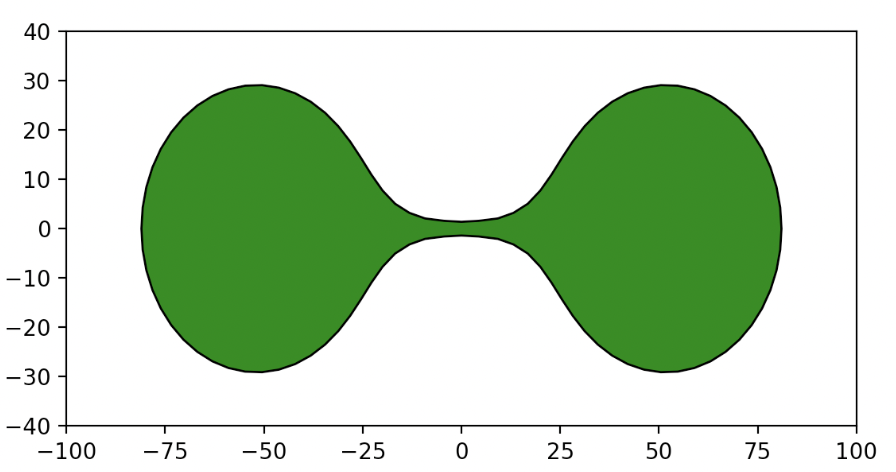}
      \subcaption{step4.}\label{fig:nopf_4}
    \end{minipage}
    \begin{minipage}[b]{0.3\linewidth}
      \centering
      \includegraphics[keepaspectratio, scale=0.3]{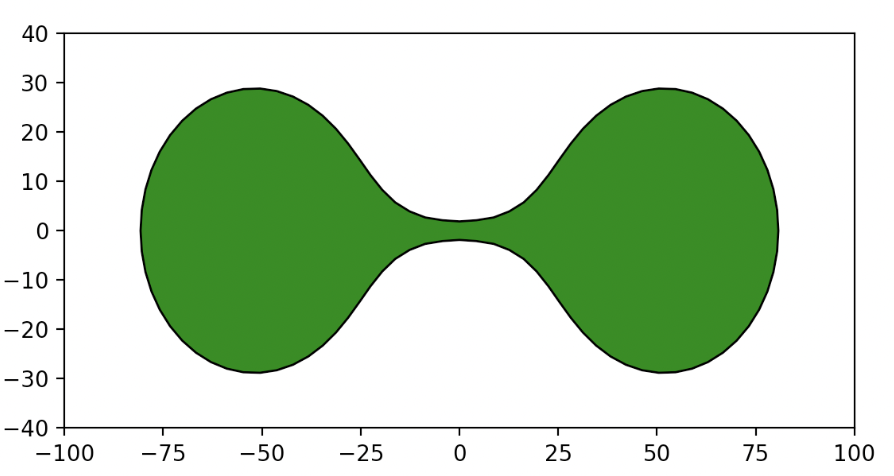}
      \subcaption{step5.}\label{fig:nopf_5}
    \end{minipage}
    \begin{minipage}[b]{0.3\linewidth}
      \centering
      \includegraphics[keepaspectratio, scale=0.3]{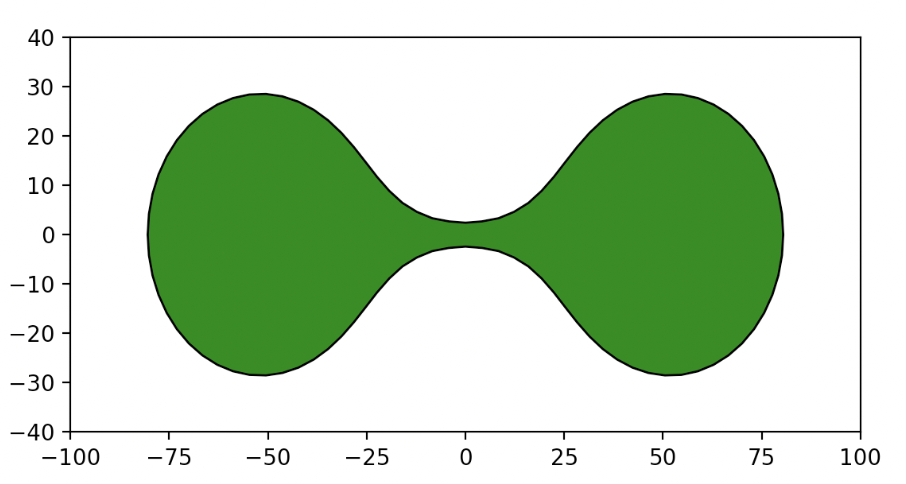}
      \subcaption{step6.}\label{fig:nopf_6}
    \end{minipage}
    \caption{Dumbbell does not pinch off.}\label{fig:nopinchoff}
  \end{figure}

\section{Extension to boundary contact cases}\label{sec:ExtensionToBoundaryContactCases}

Let us consider the Mullins-Sekerka flow with a homogeneous Neumann boundary condition in the half plane $\mathbb{R}^2_+:=\{(x_1,x_2)\in\mathbb{R}^2 \Add{\mid} x_2> 0\}$.
Formally, the problem we are concerned with is as follows:
\begin{equation}\label{eq:boundary_MS}
  \begin{cases}
    \Delta u = 0\ \ \mbox{in}\ \ \mathbb{R}^2_+, \\
    u = \kappa\ \ \mbox{on}\ \ \Gamma_t, \\
    \frac{\partial u}{\partial\mathbf{n}} = 0\ \ \mbox{on}\ \ \partial\mathbb{R}^2_+, \\
    \nabla u = O\left(\frac{1}{|x|^2}\right) \ \ \mbox{as}\ \ |x|\rightarrow\infty, \\
    V = -\left[\frac{\partial u}{\partial\nu}\right] \ \ \mbox{on}\ \ \Gamma_t.
  \end{cases}
\end{equation}
Here, $\Gamma_t$ is an open curve in $\mathbb{R}^2_+$ whose endpoints are bonded on $\partial\mathbb{R}^2_+$ for every $t\geq 0$.
\begin{description}
  \item[Alternative fundamental solutions.]
To solve this problem numerically, we have to use another approximate solution different from $\eqref{eq:def_of_U}$ and $\eqref{eq:def_of_U_bar}$.
Let us explain how to construct approximate solutions to $\eqref{eq:boundary_MS}$ step-by-step.
Similar to the case where $\Gamma_t$ does not touch the boundary, suppose that $\Gamma_t^N$ is an $N$ polygon whose vertices are $\mathbf{X}_i(1\leq i\leq N)$.
The indices are numbered counterclockwise so that $\mathbf{X}_1$ and $\mathbf{X}_N$ are the endpoints of $\Gamma_t^N$.
$\mathbf{X}_1$ and $\mathbf{X}_N$ are allowed to move in time only on $\partial\mathbb{R}^2_+$, that is the $x_1$-axis.
Singular points $y_i^\pm,z_i^\pm(2\leq i\leq N)$ are set as $y_i^\pm := \mathbf{X}_i^* \pm d\mathbf{n}_i, \Add{z_i^+ := 1000 * y^+_i}, z_i^- := \mathbf{X}_i^* - \frac{d}{2}\mathbf{n}_i$.
Now, we define the approximate solutions $U^+$ and $U^-$ as follows:
\begin{equation}\label{eq:def_of_U_boundary}
  U^+(x) := Q_0^+ + \sum_{j=1}^N Q_j^+\{E(x - y_j^+) - E(x - z_j^+) + E(x - \overline{y}_j^+) - E(x - \overline{z}_j^+)\},
\end{equation}
\begin{equation}\label{eq:def_of_U_bar_boundary}
  U^-(x) := Q_0^- + \sum_{j=1}^N Q_j^-\{E(x - y_j^-) - E(x - z_j^-) + E(x - \overline{y}_j^-) - E(x - \overline{z}_j^-)\}
\end{equation}
where we have used the notation $\overline{y} := (y',-y_n)$ when $y = (y',y_n)\in\mathbb{R}^{N-1}\times\mathbb{R}$.
Observe that the functions $U^+$ and $U^-$ are symmetric across the $x_1$-axis and fulfill the homogeneous Neumann boundary conditions owing to their structures.
The real numbers $Q_i^\pm(0\leq i\leq N)$ are determined with the aid of the Dirichlet boundary condition and the AP property. 
We should note that $\kappa_2$ and $\kappa_N$ are affected by the contact angle between $\Gamma_t^N$ and $\partial\mathbb{R}^2_+$ which is not originally
geometry of $\Gamma_t^N$. Thus, we replace the definition of $\kappa_2$ and $\kappa_N$ with
\begin{equation*}
  \kappa_2 := \frac{2\tan_2}{r_2},\ \kappa_N := \frac{2\tan_N}{r_N}.
\end{equation*}
  \item[Modified normal velocity at the end points.]
  We allowed the endpoints $\mathbf{X}_1$ and $\mathbf{X}_N$ to move in time but bonded them to the $x_1$-axis.
  Thus, it is necessary to modify the normal velocity at the points what we have defined in $\eqref{eq:normal_velocity}$.
  Suppose that the contact angle between $\Gamma_t^N$ and the $x_1$-axis at $\mathbf{X}_1$ equals $\theta$.
  When the edge $[\mathbf{X}_1,\mathbf{X}_2]$ moves at the speed $V$ in the direction of $\mathbf{n}_2$,
  the movement of the endpoint $\mathbf{X}_1$ should equal $\frac{V}{\sin{\theta}}$. Taking this observation into account,
  let us define the new normal velocity vectors at the endpoints as follows:
  \begin{equation*}
    \dot{\mathbf{X}}_1(t) := \left(\frac{V^+_1 + V^-_1}{\sin{(\arccos{(\mathbf{t}_1\cdot\mathbf{e}_1)})}}\right)\mathbf{e}_1,\ \dot{\mathbf{X}}_N(t) := \left(\frac{V^+_{N} + V^-_{N}}{\sin{(\arccos{(\mathbf{t}_{N}\cdot\mathbf{e}_1)})}}\right)\mathbf{e}_1.
  \end{equation*}
  Here, we have used the notation that $\mathbf{e}_1:=(1,0)\in\mathbb{R}^2$.

  \item[Modified UDM.]
We change the way to implement the method of uniform distribution for the vertices $\mathbf{X}_i$ because
$\mathbf{X}_1$ and $\mathbf{X}_N$ never move along the tangential vector of $\Gamma_t$, and
the edge $[\mathbf{X}_N,\mathbf{X}_1]$ is not included in $\Gamma_t^N$. Tangential velocity $W_i(1\leq i\leq N)$ are required to satisfy the following linear system:
\begin{equation*}
  \begin{cases}
  L = \sum_{i=2}^Nr_i,\ \dot{L} = \sum_{i=2}^N\kappa_iv_ir_i, \\
  -\cos_iW_i + \cos_{i+1}W_{i+1} = \frac{\dot{L}}{N-1} + \left(\frac{L}{N-1} - r_i\right)\omega - V_{i+1}\sin_{i+1} - V_i\sin_i\ \ \mbox{for}\ \ 2\leq i\leq N, \\
  W_1 = W_N = 0.
  \end{cases}
\end{equation*}
\end{description}
By these changes from the case where $\Gamma_t$ is a simply closed curve, we can determine the velocity vectors $\dot{\mathbf{X}}_i$
for each $1\leq i\leq N$. We exhibit the numerical result of a simulation for the Mullins-Sekerka flow by means of the way proposed above.
An initial open curve is an L-shaped and contacts the $x_1$-axis at $\ang{90}$ angle at two endpoints.
The collocation points are equidistantly placed and at least at the corners of the character L.
Though the contact angles become different from $\ang{90}$ as soon as we start the program,
they eventually converge to $\ang{90}$, and the shape of the curve tends to a semicircle in finite time.
\begin{figure}[H]
    \begin{minipage}[b]{0.5\linewidth}
      \centering
      \includegraphics[keepaspectratio, scale=0.25]{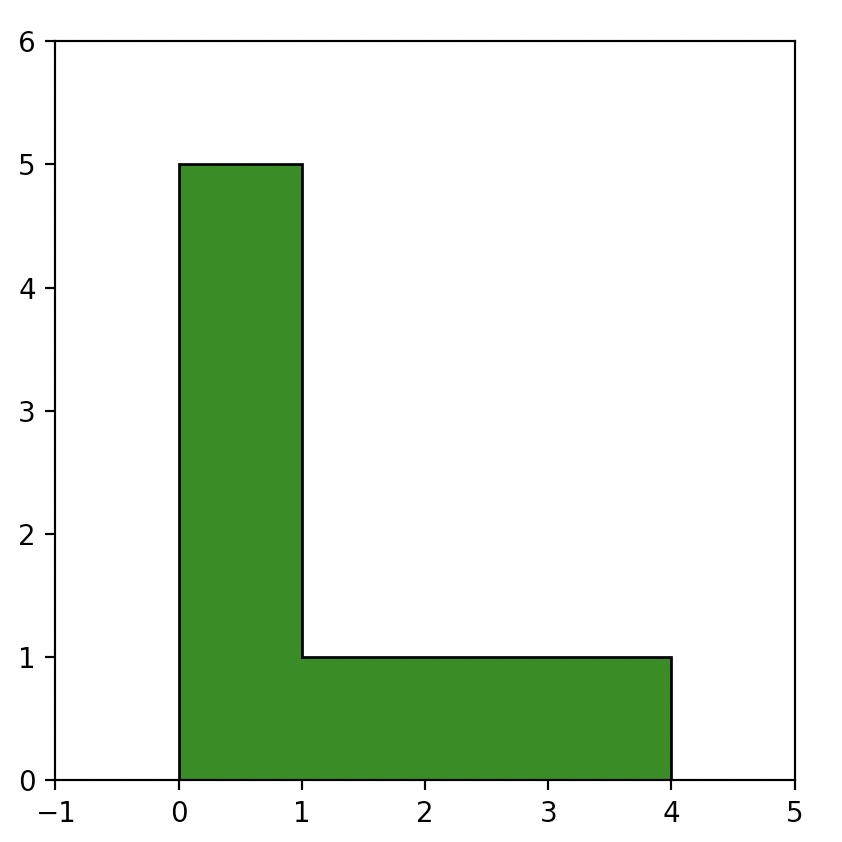}
      \subcaption{\Erase{t=0}\Add{$t = 0.0$}}\label{fig:l_0}
    \end{minipage}
    \begin{minipage}[b]{0.5\linewidth}
      \centering
      \includegraphics[keepaspectratio, scale=0.25]{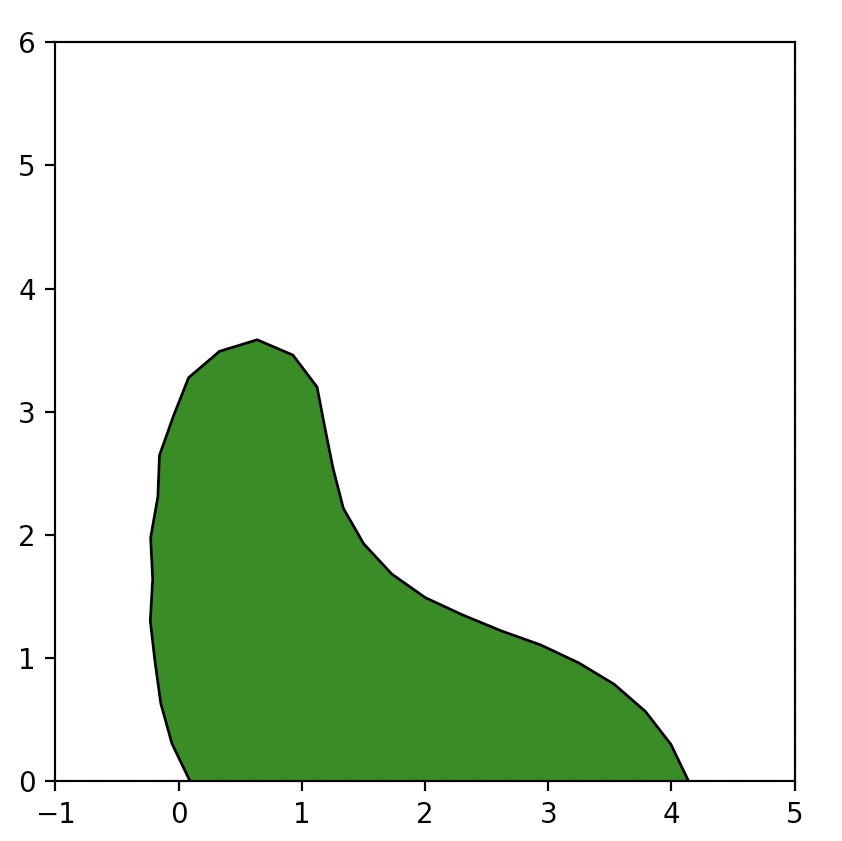}
      \subcaption{\Erase{t=0.059}\Add{$t = 0.08$}}\label{fig:l_2000}
    \end{minipage} \\
    \begin{minipage}[b]{0.5\linewidth}
      \centering
      \includegraphics[keepaspectratio, scale=0.25]{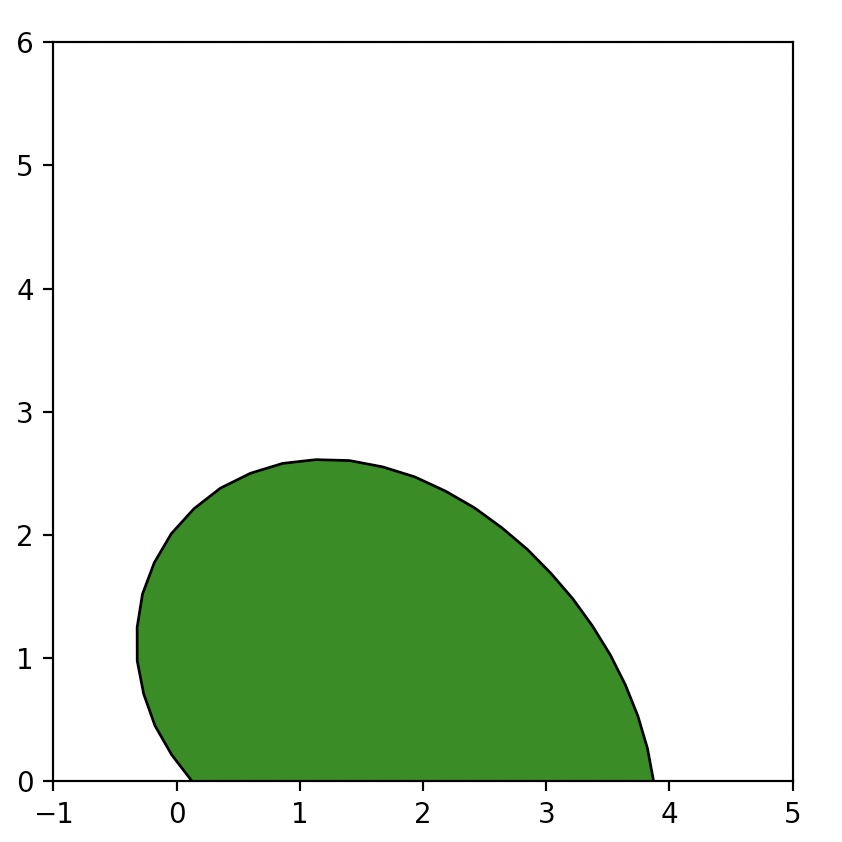}
      \subcaption{\Erase{t=0.59}\Add{$t = 0.49$}}\label{fig:l_20000}
    \end{minipage}
    \begin{minipage}[b]{0.5\linewidth}
      \centering
      \includegraphics[keepaspectratio, scale=0.25]{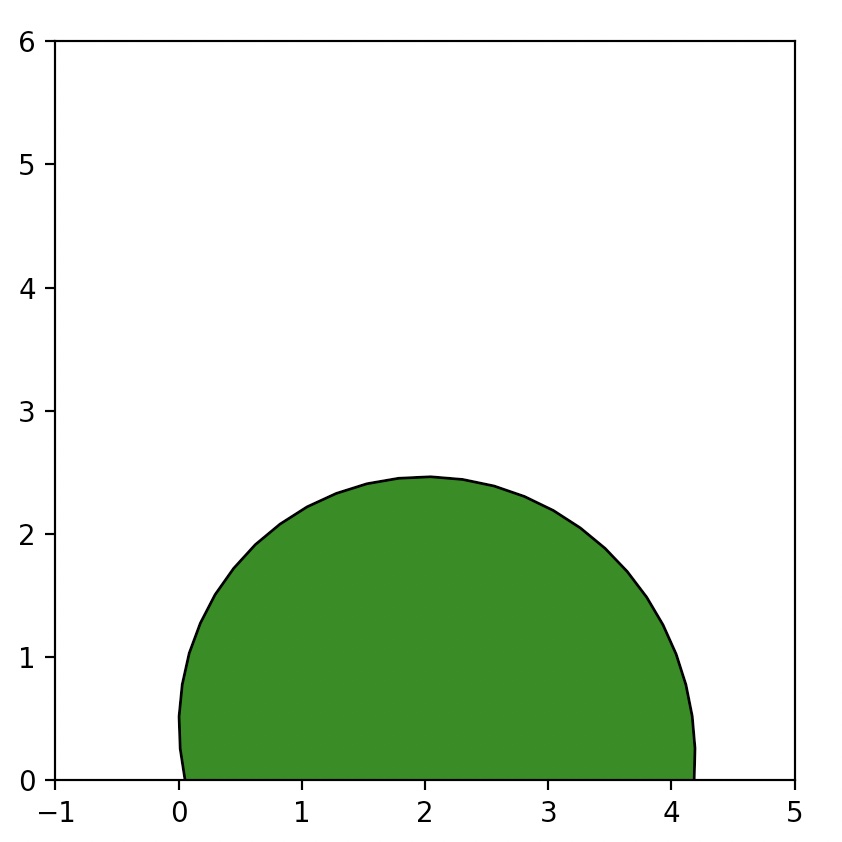}
      \subcaption{\Erase{t=2.37}\Add{$t = 2.23$}}\label{fig:l_80000}
    \end{minipage} \\
    \caption{Evolution of L-shaped open curve.}\label{fig:l}
  \end{figure}
\begin{rem}
  As mentioned in Section \ref{sec:intro}, local well-posedness of the problem $\eqref{eq:boundary_MS}$ has been shown in \cite{AbelsMaxWilke}
  whenever $\Gamma_t$ is perpendicular to the boundary $\partial\mathbb{R}^2_+$, that is to say, $\angle(\Gamma_t,\partial\mathbb{R}^2_+) = \ang{90}$.
  We should note that our scheme yields contact angles that are not equal to $\ang{90}$, and this result is different from that of \cite{AbelsMaxWilke}.
  \Add{We predict that the L-shaped curve cannot be presented by a height function over a fixed reference surface $\Sigma$, which also intersects the boundary of the container at $\ang{90}$.
  Thus, we could not detect $\ang{90}$ numerically because of this extremely short time existence.}
  \Erase{This outcome implies that there might be a solution to $\eqref{eq:boundary_MS}$ with a general contact angle condition.}
\end{rem}

\section{Acknowledgements}
The author has learned how to express curves on the plane and 
background of the method of fundamental solutions from Yazaki's monograph. 
The author is grateful to Professor Norikazu Saito for his suggestion to apply the charge simulation method to this problem.
He reviewed the manuscript and gave me a lot of helpful comments to improve the paper.
Professor Yoshikazu Giga encouraged the author to consider the case where the heat diffusion coefficients are different and examine it. 
\section{Sequence of the proofs}\label{sec:SequenceOfTheProofs}
In this section, we list the proofs of the results presented in Section \ref{sec:PropertiesOfTheScheme} that have been postponed. 
\begin{lem}\label{lem:estimate_H_G}
  Let the initial curve $\Gamma_0$ be a Jordan curve in $\mathbb{R}^2$ with at least $C^2$ regularity.
  Assume that dummy singular points $z_i$ are taken as $z_i = \Add{\mathbf{X}_i^* +} \frac{d}{2}\mathbf{n}_i$ and
  collocation points $\mathbf{X}_i$ are equidistantly placed, that is to say $r_i = \frac{L}{N}$.
  Then, for each $1\leq i,j\leq N$, 
  \begin{equation*}
    |G_{i,j}| = O(1),\  |\mathbf{H}_{i,j}| = O(\sqrt{N}),\ |H_j| = O(\sqrt{N})\ \ \mbox{as}\ \ N\longrightarrow\infty.
  \end{equation*}
  \Add{
    More strongly, it holds that
    \begin{equation*}
      \sup_{N\in\mathbb{N}}\max_{\sigma\in S(N)}\prod_{i = 1}^N|G_{i,\sigma(i)}| < \infty,
    \end{equation*}
    where $S(N)$ denotes the set of all bijections between $\{1,\cdots,N\}$.
  }
\end{lem}
\begin{proof}
  Since $\Gamma$ does not cross itself, we can choose an $N$ so large that \Erase{either} $|\mathbf{X}_i^* - y_j| \geq |\mathbf{X}_i^* - y_i|$
  \Erase{or} and $|\mathbf{X}^*_i - z_j| \geq |\mathbf{X}_i^* - $\Add{$z_i$}\Erase{$y_{i-1}|$} hold for every $1\leq i,j\leq N$. 
  Indeed, viewing the triangle $\triangle \mathbf{X}_i^*y_{i+1}\mathbf{X}_{i+1}^*$, we deduce from the cosine theorem that
  \begin{eqnarray*}
    |\mathbf{X}_i^* - y_{i+1}|^2 &=& d^2 + r^2\cos_i^2 - 2dr\cos_i\cos\left(\frac{\varphi_i}{2} + \frac{\pi}{2}\right) \\
    &=& d^2 + r^2\cos_i^2 + 2dr\cos_i\sin_i \\
    &=& \frac{1}{N} + O_{\geq 0}\left(\frac{1}{N^2}\right) + O\left(\frac{1}{N^\frac{5}{2}}\right) = \frac{1}{N} + O_{\geq 0}\left(\frac{1}{N^2}\right)
  \end{eqnarray*}
  \Add{
  for sufficiently large $N\in\mathbb{N}$. Note that $\sin_i = O(\frac{1}{N})$ since $\sin_i \approx \frac{\varphi_i}{2}$ and $\varphi_i = \hat{\kappa_i}\cdot\frac{L}{N}$ for $N\in\mathbb{N}$ large enough.
  Thus, we have $|\mathbf{X}_i^* - y_{i+1}|\geq \frac{1}{N} = |\mathbf{X}_i^* - y_i|$.
  Moreover, we also take a large $N\in\mathbb{N}$ to obtain the last equality if necessary.
  By a similar argument, we obtain
  \begin{equation*}
    |\mathbf{X}_i^* - z_{i+1}| = \frac{1}{4N} + O_{\geq 0}\left(\frac{1}{N^2}\right).
  \end{equation*}
  }
  \Erase{Assume that the former condition is valid.}
  Fix $1\leq i,j\leq N$ with $i < j$.
  Applying the Taylor expansion to 
  \Erase{$x\mapsto\log{|x|}$ around $\mathbf{X}_i^* - y_j$ and evaluating at $x = \mathbf{X}_i^* - z_j$} 
  \Add{$g(t) := \log{|(\mathbf{X}_i^* - z_j)t + (\mathbf{X}_i^* - y_j)(1-t)|} $ around $t = 0$ and evaluating at $t = 1$} show\Erase{s}
  \Add{
    \begin{multline}\label{eq:TaylorExpansionGij}
      \log{|\mathbf{X}_i^* - z_j|} = \log{|\mathbf{X}_i^* - y_j|} + \frac{\mathbf{X}_i^*-y_j}{|\mathbf{X}_i^* - y_j|^2}\cdot (y_j - z_j) \\
      + \frac{1}{2}\cdot\frac{|y_j-z_j|^2|\mathbf{X}_i^*-y_j + (y_j - z_j)s|^2 - 2\{(\mathbf{X}_i^*-y_j + (y_j - z_j)s)\cdot(y_j-z_j)\}^2}{|\mathbf{X}_i^* - y_j + (y_j - z_j)s|^4}
    \end{multline}
    for some $s\in(0,1)$. 
  }
  It readily follows that $|y_j - z_j| = \frac{d}{2} = \frac{1}{2\sqrt{N}}$. \Erase{$= O(\frac{1}{\sqrt{N}})$}
  \Add{
    For the second term on the right-hand side of $\eqref{eq:TaylorExpansionGij}$, we compute:
    \begin{equation}\label{eq:FirstOrder}
      \left|\frac{\mathbf{X}_i^* - y_j}{|\mathbf{X}_i^* - y_j|^2}\cdot(y_j - z_j)\right|\leq \frac{|y_j - z_j|}{|\mathbf{X}_i^* - y_i|} = \frac{\frac{1}{2\sqrt{N}}}{\frac{1}{\sqrt{N}}} = \frac{1}{2}.
    \end{equation}
    Let us estimate the third term. To this end, note that
    \begin{equation*}
      |\mathbf{X}_i^* - y_j + (y_j - z_j)s| \geq |\mathbf{X}_i^* - y_j|\land|\mathbf{X}_i^* - z_j|
    \end{equation*}
    holds due to an elementary geometric observation.
    Suppose that $|\mathbf{X}_i^* - y_j + (y_j - z_j)s| \geq |\mathbf{X}_i^* - y_j|$.
    Then, the third term is bounded by
    \begin{equation*}
      \frac{1}{2}\left(\frac{|y_j-z_j|^2}{|\mathbf{X}_i^* - y_j|^2} + \frac{2|y_j-z_j|^2}{|\mathbf{X}_i^* - y_j|^2}\right)\leq \frac{3}{2}\cdot\frac{|y_j-z_j|^2}{|\mathbf{X}_i^* - y_i|^2} \leq \frac{3}{2}\cdot\frac{\frac{1}{4N}}{\frac{1}{N}} = \frac{3}{8}.    
    \end{equation*}
    On the other hand, if $|\mathbf{X}_i^* - y_j + (y_j - z_j)s| \geq |\mathbf{X}_i^* - z_j|$, then the third term on the right-hand side of $\eqref{eq:TaylorExpansionGij}$ is bounded by
    \begin{equation*}
      \frac{1}{2}\left(\frac{|y_j-z_j|^2}{|\mathbf{X}_i^* - z_j|^2} + \frac{2|y_j-z_j|^2}{|\mathbf{X}_i^* - z_j|^2}\right)\leq \frac{3}{2}\cdot\frac{|y_j-z_j|^2}{|\mathbf{X}_i^* - z_{i}|^2} \leq \frac{3}{2}\cdot\frac{\frac{1}{4N}}{\frac{1}{4N}} = \frac{3}{2}.    
    \end{equation*}
    By these observation, we see that
    \begin{equation*}
      |G_{i,j}| = \frac{1}{2\pi}\log{\frac{|\mathbf{X}_i^* - y_j|}{|\mathbf{X}_i^* - z_j|}} \leq \frac{1}{2\pi}\cdot\left\{\frac{1}{2}+\left(\frac{3}{8}\lor\frac{3}{2}\right)\right\} = \frac{1}{\pi}.
    \end{equation*}
  }
  Next, we consider the function $x\mapsto\frac{x}{|x|^2}$ and again utilize the Taylor expansion to obtain
  \begin{equation*}
    \frac{\mathbf{X}_i^* - z_j}{|\mathbf{X}_i^* - z_j|^2} = \frac{\mathbf{X}_i^*-y_j}{|\mathbf{X}_i^* - y_j|^2} + \frac{1}{|\mathbf{X}_i^* - y_j|^2}\left(I - \frac{2(\mathbf{X}_i^* - y_j)\otimes(\mathbf{X}_i^* - y_j)}{|\mathbf{X}_i^* - y_j|^2}\right)(y_j-z_j) + O(|y_j-z_j|^2).
  \end{equation*}
  Thus, we have
  \begin{eqnarray*}
    |\mathbf{H}_{i,j}| &=& \frac{1}{2\pi}\left|\frac{1}{|\mathbf{X}_i^* - y_j|^2}\left(I - \frac{2(\mathbf{X}_i^* - y_j)\otimes(\mathbf{X}_i^* - y_j)}{|\mathbf{X}_i^* - y_j|^2}\right)(y_j-z_j) + O(|y_j-z_j|^2)\right| \\
    &\leq & \frac{1}{2\pi}\frac{1}{|\mathbf{X}_i^* - y_{i+1}|^2}\left|I - \frac{2(\mathbf{X}_i^* - y_j)\otimes(\mathbf{X}_i^* - y_j)}{|\mathbf{X}_i^* - y_j|^2}\right|_2|y_j- z_j| + O(|y_j-z_j|^2) \\
    &=& O(N) \times O\left(\frac{1}{\sqrt{N}}\right) + O\left(\frac{1}{N}\right) = O(\sqrt{N})\ \ \mbox{as}\ \ N\rightarrow\infty.
  \end{eqnarray*}
  Finally, let us estimate $H_j$. By the definition,
  \begin{equation*}
    |H_j| = \left|-\sum_{i=1}^N \mathbf{H}_{i,j}\cdot\mathbf{n}_ir_i\right| \leq \sum_{i=1}^N O(\sqrt{N})\times\frac{L}{N} = O(\sqrt{N}).
  \end{equation*}
\end{proof}
Before proving the main theorems, we prepare some lemmas.

\begin{lem}\label{lem:estimate_S_i}
    For each $1\leq i\leq N$, let $S_i$ be defined by
    \begin{equation*}
        S_i(x) := U^+(x)\nabla U^+(x)\cdot\mathbf{n}_i - U^-(x)\nabla U^-(x)\cdot\mathbf{n}_i.
    \end{equation*}
    Moreover, suppose that $y^\pm_i = \mathbf{X}_i^* \pm d\mathbf{n}_i, z^\pm_i = \mathbf{X}_i^* \pm \tilde{d}\mathbf{n}_i$ for some $d,\tilde{d} > 0$.
    Then,
    \begin{equation*}\label{eq:nabla_Si_boundedness}
        \sup_{1\leq i\leq N}{\|\nabla S_i\|_{L^\infty(\Gamma_i)}} \leq C
    \end{equation*}
    holds where 
    \begin{equation*}
      C := \frac{2L^2}{\pi^2d^2}\left(\sum_{i=0}^N|Q_j|\right)^2\left(1+\sqrt{3}\pi\log{\frac{L+\tilde{d}}{d}}\right).
    \end{equation*}
\end{lem}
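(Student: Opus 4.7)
The plan is to differentiate $S_i$ by the product rule and then control each resulting factor by uniform pointwise estimates for $U^\pm$, $\nabla U^\pm$, and $\nabla^2 U^\pm$ on the edge $\Gamma_i$. The key geometric input is that every singular point $y_j^\pm$ and dummy singular point $z_j^\pm$ stays at distance at least $d$ and $\tilde d$ respectively from $\Gamma^N$, which gives uniform lower bounds on the distances appearing in the gradients of the fundamental solution.

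First I would use the product rule, remembering that $\mathbf n_i$ is a constant vector on $\Gamma_i$, to obtain
$$\nabla S_i(x) = (\nabla U^+\!\cdot\mathbf n_i)\nabla U^+ + U^+(\nabla^2 U^+)\mathbf n_i - (\nabla U^-\!\cdot\mathbf n_i)\nabla U^- - U^-(\nabla^2 U^-)\mathbf n_i,$$
and then apply Cauchy--Schwarz to get
$$|\nabla S_i(x)| \leq |\nabla U^+|^2 + |U^+|\,\|\nabla^2 U^+\| + |\nabla U^-|^2 + |U^-|\,\|\nabla^2 U^-\|.$$
Next, from the explicit formulas $\nabla E(w)=\frac{1}{2\pi}\frac{w}{|w|^2}$ and $\nabla^2 E(w)=\frac{1}{2\pi|w|^2}\bigl(I - \frac{2\,w\otimes w}{|w|^2}\bigr)$, together with the uniform bounds $d\leq |x-y_j^\pm|\leq L+d$ and $\tilde d \leq |x-z_j^\pm|\leq L+\tilde d$ valid for every $j$ and every $x\in\Gamma_i$, I would derive three termwise estimates on $\Gamma_i$: a logarithmic bound $|U^\pm(x)|\leq \bigl(\sum_{j=0}^N |Q_j^\pm|\bigr)\frac{1}{2\pi}\log\frac{L+\tilde d}{d}$ via the identity $E(x-y)-E(x-z)=\frac{1}{2\pi}\log\frac{|x-y|}{|x-z|}$; a gradient bound $|\nabla U^\pm(x)|\lesssim \frac{1}{d}\sum_{j=1}^N|Q_j^\pm|$; and a Hessian bound $\|\nabla^2 U^\pm(x)\|\lesssim \frac{1}{d^2}\sum_{j=1}^N|Q_j^\pm|$. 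Substituting the three pointwise bounds into the Cauchy--Schwarz inequality for $|\nabla S_i|$, taking the supremum over $x\in\Gamma_i$ and over $i$, and then collecting constants should yield the stated form of $C$: the prefactor $L^2/d^2$ comes from the dominant $\nabla^2 E$-type terms (with $L$ controlling diameters), the factor $\bigl(\sum_j|Q_j|\bigr)^2$ from pairing two copies of the $U^\pm$-sums, the $1$ in the parenthesis from the $|\nabla U^\pm|^2$ contribution, and the logarithmic factor from the $|U^\pm|\|\nabla^2 U^\pm\|$ contribution.

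The main obstacles I expect are twofold. The first and more substantive one is establishing the lower distance bound $|x-y_j^\pm|\geq d$ for $x\in\Gamma_i$ \emph{for every} $j$, not merely $j=i$; the case $j=i$ is immediate from the perpendicular displacement, but for $j\neq i$ one has to invoke non-self-intersection of $\Gamma^N$ together with the smallness of $d,\tilde d$ compared with the local geometry of the polygon, so that singular points attached to other edges cannot encroach on $\Gamma_i$. The second, more routine, obstacle is bookkeeping: matching the precise numerical prefactor, in particular pinning down the $\sqrt 3\,\pi$ coefficient in front of the logarithm, which I expect to arise from carefully tracking the operator norms of the $2\times 2$ symmetric tensor $I - 2\hat w\otimes\hat w$ that appears in $\nabla^2 E$ and collecting it with the numerical factors from the $\frac{1}{2\pi}$'s.
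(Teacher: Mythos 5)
Your proposal takes essentially the same route as the paper: the paper's proof consists of exactly your four-term product-rule decomposition of $\nabla S_i$, after which it simply cites Lemma 1 and the proof of Theorem 2 of Sakakibara--Yazaki for the termwise bounds on $U^\pm$, $\nabla U^\pm$ and $\nabla^2 U^\pm$ and invokes the identical structure of $U^-$ for the remaining two terms, so your explicit pointwise estimates are precisely the content the paper outsources. The geometric caveat you flag (the uniform lower bound $|x-y_j^\pm|\geq d$ for $j\neq i$ on a possibly non-convex polygon) is legitimate, but it is left equally implicit by the paper.
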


\begin{proof}
    Direct differentiation shows that
    \begin{equation}\label{eq:S_i}
        \nabla S_i = \nabla U^+\nabla U^+\cdot\mathbf{n}_i + U^+\nabla^2U^+\mathbf{n}_i - \nabla U^-\nabla U^-\cdot\mathbf{n}_i - U^-\nabla^2U^-\mathbf{n}_i.
    \end{equation}

    Here, the symbol $\nabla^2$ denotes the Hessian. 
    The first two terms of $\eqref{eq:S_i}$ have been estimated in Lemma 1 and the proof of Theorem 2 at \cite{Sakakibara_Yazaki}.
    Since the structure of $U^-$ is the same as $U^+$, the same argument works well to estimate the third and fourth terms.
    We omit the proof.
\end{proof}
\begin{lem}\label{lem:boundedness_nabla_1/x**2}
    Let $f:\mathbb{R}^2\rightarrow\mathbb{R}$ be partially differentiable and satisfy $\nabla f = O\left(\frac{1}{|x|^\alpha}\right)$ as $|x|\rightarrow\infty$
    for some $\alpha > 1$. Then, $f$ is bounded in $\mathbb{R}^2$.
\end{lem}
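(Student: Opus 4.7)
The plan is to exhibit $f(x)$, for large $|x|$, as a bounded constant plus a uniformly convergent integral of $\nabla f$ along a curve from a fixed reference sphere to $x$, and then exploit $\alpha > 1$ to close the estimate. First pick $R_0 > 0$ large enough that the decay bound $|\nabla f(y)| \leq C|y|^{-\alpha}$ holds for all $|y| \geq R_0$. In the applications of the lemma in the paper the relevant $f$ is $u$ (harmonic off $\Gamma_t$, hence $C^\infty$ there), so $f$ is continuous and in particular bounded on the compact ball $\overline{B}(0, R_0)$; it therefore suffices to bound $f$ uniformly on the exterior region $\{|x|\geq R_0\}$.

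For such an $x$, write $x = r\omega$ with $r = |x|$ and $\omega = x/r \in S^1$, and apply the fundamental theorem of calculus along the radial segment $\rho \mapsto \rho\omega$ for $\rho \in [R_0, r]$:
\begin{equation*}
f(r\omega) - f(R_0\omega) = \int_{R_0}^{r} \nabla f(\rho\omega)\cdot\omega\, d\rho.
\end{equation*}
Taking absolute values and using the decay hypothesis gives
\begin{equation*}
|f(x) - f(R_0\omega)| \leq \int_{R_0}^{r} |\nabla f(\rho\omega)|\, d\rho \leq \int_{R_0}^{\infty} \frac{C}{\rho^{\alpha}}\, d\rho = \frac{C\, R_0^{1-\alpha}}{\alpha - 1},
\end{equation*}
and the right-hand side is finite precisely because $\alpha > 1$ and is independent of $x$ and $\omega$. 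Combining this with $\sup_{|y| = R_0} |f(y)| < \infty$ yields a uniform bound on $|f(x)|$ for $|x| \geq R_0$, and together with the boundedness on the ball we obtain boundedness on all of $\mathbb{R}^2$.

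The only delicate step is the use of the fundamental theorem of calculus along a radial ray, since literal partial differentiability in Cartesian coordinates does not by itself imply differentiability along arbitrary directions. In the applications of the lemma the ambient regularity is far higher than what the hypothesis states, so this is not an actual obstacle. Alternatively, one can replace the radial path by an axis-parallel polygonal path: go $(0,0)\to(x_1,0)\to(x_1,x_2)$ when $|x_1|\geq R_0$ and $(0,0)\to(0,x_2)\to(x_1,x_2)$ when $|x_2|\geq R_0$, so that on the exterior at least one leg always stays in the far field where the decay estimate applies, while the short complementary leg is controlled by the (locally bounded) partial derivatives on a compact set. The same key estimate $\int_{R_0}^{\infty}\rho^{-\alpha}\, d\rho < \infty$, enabled precisely by $\alpha > 1$, closes the argument.
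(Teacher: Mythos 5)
Your proof is correct and follows essentially the same route as the paper's: integrate $\nabla f$ along the radial ray from a fixed radius $R_0$ out to $x$ and use $\int_{R_0}^{\infty}\rho^{-\alpha}\,d\rho<\infty$ for $\alpha>1$, the only cosmetic difference being that the paper discretizes the ray into unit steps and telescopes a convergent series $\sum_k C(k+R)^{-\alpha}$ via the mean value theorem instead of writing a single integral. Your remark about partial differentiability not guaranteeing differentiability along the ray is a fair point that the paper's own argument also silently assumes away (as it does the boundedness of $f$ on $\overline{B}_R$), so no gap is introduced relative to the original.
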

\begin{proof}
    From the assumption for $\nabla f$, there exists $R>0$ such that for some $C>0$, it holds
    \begin{equation*}
        |x|\geq R \Longrightarrow |\nabla f(x)|\leq \frac{C}{|x|^\alpha}\ \ \forall x\in\mathbb{R}^2.
    \end{equation*}
    Fix any $x\in\mathbb{R}^2\backslash B_R$ and set $x_n := (R+n)\frac{x}{|x|}$ for each $n\in\mathbb{N}\cup\{0\}$.
    We construct a sequence $\{\tilde{x_n}\}_n$ by apply Taylor's expansion around $x_{n-1}$ to find $\tilde{x_n}\in(x_{n-1},x_{n})$ for which 
    \begin{equation*}
        f(x_n) = f(x_{n-1}) + \nabla f(\tilde{x_n})\cdot(x_n - x_{n-1})
    \end{equation*}
    holds. Suppose that $x\in[x_{n_0},x_{n_0+1}]$ for some $n_0\in\mathbb{N}\cup\{0\}$.
    Note that $|x_n - x_{n-1}| = 1$ and $|\nabla f(\tilde{x}_k)|\leq \frac{C}{(k-1 + R)^\alpha}$ are valid from the construction of $\{x_n\}_n$ and $\{\tilde{x}_n\}_n$.
    Moreover, we again use the Taylor expansion around $x = x_{n_0}$ to obtain $\tilde{x}\in(x_{n_0},x)$ such that 
    $f(x) = f(x_{n_0}) + \nabla f(\tilde{x})\cdot (x - x_{n_0})$.
    Then, we compute
    \begin{eqnarray*}
        |f(x)| &\leq & |f(x) - f(x_0)| + |f(x_0)| \\
        &\leq & |f(x) - f(x_{n_0})| + \sum_{k = 1}^{n_0}|f(x_k) - f(x_{k-1})| + |f(x_0)| \\
        &\leq & \frac{C}{(n_0 + R)^\alpha} + \sum_{k = 1}^{n_0}|\nabla f(\tilde{x}_k)||x_k - x_{k-1}| + |f(x_0)| \\
        &\leq & \sum_{k = 0}^{n_0}\frac{C}{(k+R)^\alpha} + \max_{\overline{B}_R}|f| \leq \sum_{k=0}^\infty \frac{C}{(k+R)^\alpha }+ \max_{\overline{B}_R}|f|.
    \end{eqnarray*}

    Note that the right-hand side of the above inequality is finite and independent of the choice of $x\in\mathbb{R}^2\backslash B_R$.
    Therefore, we can conclude that $f$ is bounded in $\mathbb{R}^2$.
  
\end{proof}

\begin{lem}\label{lem:U_minus_is_bounded}
    The approximate solution $U^-$ defined by $\eqref{eq:def_of_U_bar}$ satisfies $\nabla U^-(x) = O\left(\frac{1}{|x|^2}\right)$ as $|x|\longrightarrow\infty$.
    Moreover, $U^-$ is bounded in $\mathbb{R}^2\backslash \Omega$.
\end{lem}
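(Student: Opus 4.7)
The approach is to exploit the dipole-like structure of each pair $E(x-y_i^-)-E(x-z_i^-)$: the leading logarithmic singularities cancel in the far field, so the gradient decays like $1/|x|^2$ rather than $1/|x|$. Once this decay is established, boundedness of $U^-$ itself follows from Lemma \ref{lem:boundedness_nabla_1/x**2}.

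First I would differentiate $\eqref{eq:def_of_U_bar}$ directly to obtain
\begin{equation*}
\nabla U^-(x) \;=\; \frac{1}{2\pi}\sum_{i=1}^N Q_i^-\left(\frac{x-y_i^-}{|x-y_i^-|^2} - \frac{x-z_i^-}{|x-z_i^-|^2}\right).
\end{equation*}
Fix $R_0>0$ large enough that $\overline{\Omega}\subset B_{R_0/2}$, which is possible because all $y_i^-,z_i^-\in\Omega$. Then for every $|x|\geq R_0$ and every $i$ one has $|x-y_i^-|,|x-z_i^-|\geq |x|/2$. A second-order Taylor expansion of the smooth map $y\mapsto (x-y)/|x-y|^2$ around $y=z_i^-$, evaluated at $y=y_i^-$, gives
\begin{equation*}
\frac{x-y_i^-}{|x-y_i^-|^2} - \frac{x-z_i^-}{|x-z_i^-|^2} \;=\; \frac{1}{|x-z_i^-|^2}\left(-I + \frac{2(x-z_i^-)\otimes(x-z_i^-)}{|x-z_i^-|^2}\right)(z_i^- - y_i^-) + O\!\left(\frac{1}{|x|^3}\right),
\end{equation*}
since the Hessian is of size $O(1/|x-z_i^-|^3)$ and $|y_i^- - z_i^-|=d/2$ is a fixed constant. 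Each bracketed term is therefore of order $O(1/|x|^2)$. As the sum has a fixed number $N$ of terms and the charges $Q_i^-$ are fixed real numbers (determined by the finite-dimensional system $\eqref{eq:external_problem}$), we conclude $\nabla U^-(x)=O(1/|x|^2)$ as $|x|\to\infty$.

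For the second claim, I would apply Lemma \ref{lem:boundedness_nabla_1/x**2} with $\alpha=2$. The only nuisance is that the lemma is phrased on all of $\mathbb{R}^2$, whereas $U^-$ has singularities at the points $y_i^-,z_i^-\in\Omega$; however the proof of that lemma proceeds by integrating $\nabla U^-$ along radial segments starting from the sphere $\partial B_{R_0}$ and going outward, all of which lie in the region where $U^-$ is smooth. Hence the same argument yields $\sup_{|x|\geq R_0}|U^-(x)|<\infty$. Combining this with the fact that $U^-$ is continuous, hence bounded, on the compact annular set $\overline{B_{R_0}}\setminus\Omega$, we obtain boundedness of $U^-$ on $\mathbb{R}^2\setminus\Omega$.

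The only genuine obstacle is the Taylor-expansion bookkeeping, ensuring that the cancellation indeed yields $O(1/|x|^2)$ rather than a merely $O(1/|x|)$ bound; once the first-order term is written out explicitly and bounded using $|x-z_i^-|\geq |x|/2$, the rest is routine.
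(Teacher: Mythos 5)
Your proof is correct and follows essentially the same route as the paper: both arguments extract the $O(1/|x|^2)$ decay of $\nabla U^-$ from the cancellation of the leading (monopole) terms within each pair $E(x-y_i^-)-E(x-z_i^-)$ — you via a Taylor expansion in the source point, the paper via adding and subtracting $\tfrac{x}{|x|^2}$ — and both then conclude boundedness by invoking Lemma \ref{lem:boundedness_nabla_1/x**2} with $\alpha=2$. Your extra remark on why that lemma still applies despite the singularities of $U^-$ inside $\Omega$ (the radial integration paths stay outside $B_{R_0}$) is a small but welcome refinement that the paper leaves implicit.
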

\begin{proof}
    The first assertion is straightforward from the following calculation:
    \begin{eqnarray}
        \nabla {U}^-(x) &=& \sum_{i=1}^N {Q}^-_i(\nabla E(x - y_i) - \nabla E(x - z_i)) = \sum_{i=1}^N {Q}^-_i\left(\frac{x-y_i}{|x-y_i|^2} - \frac{x-z_i}{|x-z_i|^2}\right) \\
        &=& \sum_{i=1}^N{Q}^-_i\left(\frac{x-y_i}{|x-y_i|^2} - \frac{x}{|x|^2} - \frac{x-z_i}{|x-z_i|^2} + \frac{x}{|x|^2}\right)\label{eq:plus_minus_teq} \\ 
        &=& \sum_{i=1}^N{Q}^-_i\left(\frac{|x|^2-|x-y_i|^2}{|x-y_i|^2|x|^2}x - \frac{y_i}{|x-y_i|^2} + \frac{|x-z_i|^2-|x|^2}{|x-z_i|^2|x|^2}x + \frac{z_i}{|x-z_i|^2}\right)\label{eq:plus_minus_teq_2} \\
        &=& O\left(\frac{1}{|x|^2}\right)\ \ \mbox{as}\ \ |x|\rightarrow\infty.
    \end{eqnarray}
    The second assertion follows immediately by setting $\alpha := 2$ in Lemma \ref{lem:boundedness_nabla_1/x**2}.
\end{proof}
\begin{rem}
    An argument similar to $\eqref{eq:plus_minus_teq}$ can be found in \cite{Bates1995ANS} in which the boundary integral method was applied to construct
    an approximate solution to $\eqref{eq:Mullins_Sekerka_intro}$. To show decay of the derivative of solutions as $|x|\rightarrow\infty$, they used the area-preserving restriction, namely
    the mean value of the jump of normal derivatives across the phase interface equals zero. Due to the definition $\eqref{eq:def_of_U_bar}$, 
    we do not have to use the second equality of $\eqref{eq:external_problem}$.
\end{rem}
\begin{proof}[Proof of Theorem 1]
  Take an $N\in\mathbb{N}$ so large that $\mathbb{A}_N$ is regular. To confirm that the first value is finite, let us estimate $\det{\mathbb{A}_N}$ and $\widetilde{\mathbb{A}}_N$ separately.
  The cofactor expansion of $\mathbb{A}_N$ \Erase{and Lemma \ref{lem:estimate_H_G}} gives
  \begin{eqnarray*}
    \det{\mathbb{A}_N} &=& \sum_{i=1}^N(-1)^{i+1}\sum_{\sigma\in S(N)}(-1)^{t_\sigma}H_{\sigma(i)}G_{1,\sigma(1)}\cdots G_{i-1,\sigma(i-1)}G_{i+1,\sigma(i+1)}\cdots G_{N,\sigma(N)} \\
    &=& \Add{\sum_{i=1}^N\sum_{\sigma\in S(N)}O(\alpha^N\sqrt{N}) = O(N!N^{\frac{3}{2}}\alpha^N)}
  \end{eqnarray*}
  \Add{where $\alpha > 0$ is a positive constant which is independent of $N$ due to Lemma \ref{lem:estimate_H_G}.}
  We should note that $\sharp S(N) = N!$ to get the last order. 
  By contrast, the cofactors of $\mathbb{A}_N$ can be estimated as follows:
  \begin{equation*}
    (\widetilde{\mathbb{A}}_N)^T_{1,1} = (-1)^{1+1}\det{\mathbb{G}} = O(N!).
  \end{equation*}
  For $2\leq j\leq N+1$,
  \begin{equation*}
    (\widetilde{\mathbb{A}}_N)^T_{1,j} = (-1)^{1+j}\begin{vmatrix}
      1 & G_{1,1} & \cdots & G_{1,j-2} & G_{1,j} & \cdots & G_{1,N} \\
      \vdots &&&&&& \\
      1 & G_{N,1} & \cdots & G_{N,j-2} & G_{N,j} & \cdots & G_{N,N}
    \end{vmatrix} = O(N!).
  \end{equation*}
  For $2\leq i,j \leq N+1$,
  \begin{equation*}
    (\widetilde{\mathbb{A}}_N)^T_{i,j} = (-1)^{i+j}\begin{vmatrix}
      0 & H_1 & \cdots & H_{j-2} & H_{j} & \cdots & H_N \\
      1 & G_{1,1} & \cdots & G_{1,j-2} & G_{1,j} & \cdots & G_{1,N} \\
      \vdots &&&&&& \\
      1 & G_{i-1,1} & \cdots & G_{i-2,j-2} & G_{i-2,j} & \cdots & G_{i-2,N} \\
      1 & G_{i+1,1} & \cdots & G_{i,j-2} & G_{i,j} & \cdots & G_{i,N} \\
      \vdots &&&&&& \\
      1 & G_{N,1} & \cdots & G_{N,j-2} & G_{N,j} & \cdots & G_{N,N}
    \end{vmatrix} = O(N!\sqrt{N}).
  \end{equation*}
  \Erase{By these}\Add{These} estimations tell us that while the absolute sum of the first columns has the order $O(N!N)$,
  that of second columns and beyond have the order $O(N!N^{\frac{3}{2}})$.
  This implies nothing but $\|\widetilde{\mathbb{A}}_N\|_1 = O(N!N^{\frac{3}{2}})$.
  Therefore, we complete the proof of the first value by
  \begin{equation*}
    \|\mathbb{A}_N^{-1}\|_1 = \begin{Vmatrix}
      \frac{1}{\det{\mathbb{A}_N}}\widetilde{\mathbb{A}}_N
    \end{Vmatrix}_1 = \frac{O(N!N^{\frac{3}{2}})}{O(N!N^{\frac{3}{2}})}\times O(N) = O(N).
  \end{equation*}

  Recalling the relationship $\mathbf{A}_N\mathbf{Q}_N = \mathbf{K}_N$, 
  we have $\mathbf{Q}_N = \mathbb{A}^{-1}\mathbf{K}_N$.
  Since $\Gamma$ is smooth, the curvature of $\Gamma$ has a global maximum so that $\kappa_i = O(1)$
  as $N\rightarrow\infty$ for each $1\leq i\leq N$. Then, we compute for each $0\leq i\leq N$,
  \begin{equation*}
    Q_i = \sum_{j = 1}^{N+1}(\mathbb{A}_N^{-1})_{i,j}\kappa_{j-1} = \sum_{j=1}^{N+1}\frac{1}{\det{\mathbb{A}_N}}(\widetilde{\mathbb{A}}_N)_{i,j}\kappa_{j-1} = \sum_{j=1}^{N+1}\frac{O(N!\sqrt{N})}{O(N!N^{\frac{3}{2}})} = O(1).
  \end{equation*}
\end{proof}

\begin{proof}[Proof of \Add{Theorem} \ref{thm:AP}]
    As shown in Proposition \ref{prop:1}, the derivative of $A$ with respect to the time variable is expressed as
    \begin{equation*}
        \dot{A} = \sum_{i=1}^N (v^+_i + v^-_i)r_i + \mbox{errA}.
    \end{equation*}
    Owing to the UDM, it immediately follows that $\mbox{errA} = 0$. On the other hand,
    the second equations of $\eqref{eq:internal_problem}$ and $\eqref{eq:external_problem}$
    require that $\sum_{i=1}^N v^+_ir_i = \sum_{i=1}^N{v}^-_ir_i = 0$. Therefore, we have $\dot{A} = 0$.
\end{proof}

\begin{lem}\label{lem:T_i_and_N_i}
    For each $1\leq i\leq N$,
    \begin{eqnarray*}
        \mathbf{T}_i &=& \cos_i\mathbf{t}_i - \sin_i\mathbf{n}_i = \cos_i\mathbf{t}_{i+1} + \sin_i\mathbf{n}_{i+1}, \\
        \mathbf{N}_i &=& \cos_i\mathbf{n}_i + \sin_i\mathbf{t}_i = \cos_i\mathbf{n}_{i+1} - \sin_i\mathbf{t}_{i+1}.
    \end{eqnarray*}
\end{lem}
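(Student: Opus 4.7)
The plan is a direct trigonometric computation once one identifies how the orthonormal frame $\{\mathbf{t}_{i+1},\mathbf{n}_{i+1}\}$ is obtained from $\{\mathbf{t}_i,\mathbf{n}_i\}$ by a planar rotation through the signed outer angle $\varphi_i$. I would first record the four relevant inner products. By definition $\mathbf{t}_i\cdot\mathbf{t}_{i+1}=\cos\varphi_i$, and since the perp operator $(a,b)\mapsto(b,-a)$ is an isometry, $\mathbf{n}_i\cdot\mathbf{n}_{i+1}=\mathbf{t}_i^\perp\cdot\mathbf{t}_{i+1}^\perp=\cos\varphi_i$ as well. For the mixed products, one expands $\mathbf{t}_i\cdot\mathbf{n}_{i+1}=\mathbf{t}_i\cdot\mathbf{t}_{i+1}^\perp=\det[\mathbf{t}_i\mid\mathbf{t}_{i+1}]$, and the latter equals $\sin\varphi_i$ with the correct sign because $\operatorname{sgn}(\mathbf{t}_i,\mathbf{t}_{i+1})$ in the definition of $\varphi_i$ is exactly the sign of that $2\times 2$ determinant; analogously $\mathbf{n}_i\cdot\mathbf{t}_{i+1}=-\sin\varphi_i$.

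Because $\{\mathbf{t}_i,\mathbf{n}_i\}$ is an orthonormal basis of $\mathbb{R}^2$, decomposing in it yields
\[
\mathbf{t}_{i+1}=\cos\varphi_i\,\mathbf{t}_i-\sin\varphi_i\,\mathbf{n}_i,\qquad
\mathbf{n}_{i+1}=\sin\varphi_i\,\mathbf{t}_i+\cos\varphi_i\,\mathbf{n}_i,
\]
and inverting the $2\times 2$ rotation (or equally, expanding in the basis $\{\mathbf{t}_{i+1},\mathbf{n}_{i+1}\}$) supplies the symmetric relations $\mathbf{t}_i=\cos\varphi_i\,\mathbf{t}_{i+1}+\sin\varphi_i\,\mathbf{n}_{i+1}$ and $\mathbf{n}_i=-\sin\varphi_i\,\mathbf{t}_{i+1}+\cos\varphi_i\,\mathbf{n}_{i+1}$.

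The conclusion is then purely algebraic. Substituting the first identity into the definition of $\mathbf{T}_i$,
\[
\mathbf{t}_i+\mathbf{t}_{i+1}=(1+\cos\varphi_i)\,\mathbf{t}_i-\sin\varphi_i\,\mathbf{n}_i=2\cos_i^{\,2}\,\mathbf{t}_i-2\sin_i\cos_i\,\mathbf{n}_i,
\]
using the half-angle identities $1+\cos\varphi_i=2\cos_i^{\,2}$ and $\sin\varphi_i=2\sin_i\cos_i$; dividing by $2\cos_i$ gives $\mathbf{T}_i=\cos_i\mathbf{t}_i-\sin_i\mathbf{n}_i$. The second representation of $\mathbf{T}_i$ is obtained identically by expanding $\mathbf{t}_i+\mathbf{t}_{i+1}$ in the basis $\{\mathbf{t}_{i+1},\mathbf{n}_{i+1}\}$, and the two formulae for $\mathbf{N}_i$ come from the same manipulation applied to $\mathbf{n}_i+\mathbf{n}_{i+1}$. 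I anticipate no real obstacle; the only delicate point is bookkeeping the sign of $\sin\varphi_i$ via the sgn convention, which is why I would check the determinant identity once explicitly before reeling off the four cases.
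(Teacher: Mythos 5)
Your proof is correct and rests on the same underlying idea as the paper's: the frame $\{\mathbf{t}_{i+1},\mathbf{n}_{i+1}\}$ is the rotation of $\{\mathbf{t}_i,\mathbf{n}_i\}$ through the signed angle $\varphi_i$, so $\mathbf{T}_i$ and $\mathbf{N}_i$ are the half-angle rotations. The paper simply asserts this geometric fact in one line, whereas you carry out the basis decomposition and half-angle algebra explicitly and, usefully, pin down the sign of $\sin\varphi_i$ via the $\operatorname{sgn}$ convention, which the paper leaves implicit.
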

\begin{proof}
  This is easily seen if one notes that $\mathbf{T}_i$ is derived by either rotating $\mathbf{t}_i$ by the angle \Erase{$\frac{\phi_i}{2}$}\Add{$\frac{\varphi_i}{2}$} counterclockwise or
  rotating $\mathbf{t}_{i+1}$ with the angle \Erase{$\frac{\phi_i}{2}$}\Add{$\frac{\varphi_i}{2}$} clockwise.
  The second formula immediately follows by rotating the first formula by the angle $\frac{\pi}{2}$.
\end{proof}
We combine the formula $\eqref{eq:evolution_equation}$ with Lemma \ref{lem:T_i_and_N_i} to
get the following formulae:

\begin{lem}
    \begin{eqnarray}
        \dot{\mathbf{X}}_i &=& (V^+_i\cos_i + V^-_i\cos_i - W_i\sin_i)\mathbf{n}_i + (V^+_i\sin_i + V^-_i\sin_i + W_i\cos_i)\mathbf{t}_i \label{eq:lem2_first_formula}\\
        &=& (V^+_i\cos_i + V^-_i\cos_i + W_i\sin_i)\mathbf{n}_{i+1} + (-V^+_i\sin_i - V^-_i\sin_i + W_i\cos_i)\mathbf{t}_{i+1},\label{eq:lem2_second_formula} \\
        \dot{\mathbf{X}}_{i-1} &=& (V^+_{i-1}\cos_{i-1} + V^-_{i-1}\cos_{i-1} + W_{i-1}\sin_{i-1})\mathbf{n}_i \\
        &&+ (-V^+_{i-1}\sin_{i-1} - V^-_{i-1}\sin_{i-1} + W_{i-1}\cos_{i-1})\mathbf{t}_i.
    \end{eqnarray}
\end{lem}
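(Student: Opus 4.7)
The proof is essentially a direct substitution, so the plan is to take the evolution equation \eqref{eq:evolution_equation} (equivalently \eqref{eq:DiscreteEvolutionLaw}), namely $\dot{\mathbf{X}}_i = V_i\mathbf{N}_i + W_i\mathbf{T}_i$ with $V_i = V_i^+ + V_i^-$, and expand $\mathbf{N}_i$ and $\mathbf{T}_i$ using the two representations supplied by Lemma \ref{lem:T_i_and_N_i}. I would first derive the formula in the $\{\mathbf{n}_i,\mathbf{t}_i\}$ frame, then re-derive it in the $\{\mathbf{n}_{i+1},\mathbf{t}_{i+1}\}$ frame, and finally obtain the third identity by shifting the index $i\mapsto i-1$ in the second one.

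For the first identity I substitute $\mathbf{N}_i = \cos_i\mathbf{n}_i + \sin_i\mathbf{t}_i$ and $\mathbf{T}_i = \cos_i\mathbf{t}_i - \sin_i\mathbf{n}_i$ into $\dot{\mathbf{X}}_i = (V_i^+ + V_i^-)\mathbf{N}_i + W_i\mathbf{T}_i$, and then collect the $\mathbf{n}_i$- and $\mathbf{t}_i$-components; the coefficient of $\mathbf{n}_i$ is $(V_i^+ + V_i^-)\cos_i - W_i\sin_i$ and the coefficient of $\mathbf{t}_i$ is $(V_i^+ + V_i^-)\sin_i + W_i\cos_i$, which is exactly \eqref{eq:lem2_first_formula}. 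For the second identity I instead use the alternative expressions $\mathbf{N}_i = \cos_i\mathbf{n}_{i+1} - \sin_i\mathbf{t}_{i+1}$ and $\mathbf{T}_i = \cos_i\mathbf{t}_{i+1} + \sin_i\mathbf{n}_{i+1}$ from the same lemma; the resulting $\mathbf{n}_{i+1}$-coefficient is $(V_i^+ + V_i^-)\cos_i + W_i\sin_i$ and the $\mathbf{t}_{i+1}$-coefficient is $-(V_i^+ + V_i^-)\sin_i + W_i\cos_i$, matching \eqref{eq:lem2_second_formula}.

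The third identity is obtained immediately by replacing $i$ with $i-1$ in the second identity, because the edge indexed $i$ at vertex $\mathbf{X}_i$ is the edge indexed $i$ at vertex $\mathbf{X}_{i-1}$ viewed as its "next" edge; in other words, applying \eqref{eq:lem2_second_formula} at index $i-1$ gives $\dot{\mathbf{X}}_{i-1}$ in terms of $\mathbf{n}_i$ and $\mathbf{t}_i$, which is exactly the claimed expression.

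There is no real obstacle here; the only thing to be careful about is the sign of $\sin_i$ when passing between the frames $\{\mathbf{n}_i,\mathbf{t}_i\}$ and $\{\mathbf{n}_{i+1},\mathbf{t}_{i+1}\}$, which is controlled by the signed half-angle $\varphi_i/2$ and is already built into Lemma \ref{lem:T_i_and_N_i}. Once those two representations of $\mathbf{N}_i$ and $\mathbf{T}_i$ are in hand, the rest is mechanical bookkeeping of coefficients.
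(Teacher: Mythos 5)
Your proposal is correct and follows exactly the paper's own argument: substitute the two representations of $\mathbf{N}_i$ and $\mathbf{T}_i$ from Lemma \ref{lem:T_i_and_N_i} into the evolution equation $\eqref{eq:evolution_equation}$, collect coefficients in each frame, and obtain the formula for $\dot{\mathbf{X}}_{i-1}$ by the index shift $i\mapsto i-1$ in $\eqref{eq:lem2_second_formula}$. The only difference is that you write out the coefficient bookkeeping explicitly, which the paper leaves as "straightforward."
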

\begin{proof} 
    The formula with respect to $\dot{\mathbf{X}}_i$ is straightforward, by substituting the formulae in Lemma \ref{lem:T_i_and_N_i} to $\eqref{eq:evolution_equation}$.
    On the other hand, the formula with respect to $\dot{\mathbf{X}}_{i-1}$ is obtained by replacing $i$ with $i-1$ in $\eqref{eq:lem2_second_formula}$.
\end{proof}
We are now in the position to prove the results stated in Section \ref{sec:PropertiesOfTheScheme}.

\begin{proof}[Proof of Proposition 1]
    At first, note that 
    \begin{equation*}
        \dot{r}_i = \frac{\mathbf{X}_i - \mathbf{X}_{i-1}}{|\mathbf{X}_i - \mathbf{X}_{i-1}|}\cdot (\dot{\mathbf{X}}_i - \dot{\mathbf{X}}_{i-1}) = \mathbf{t}_i\cdot(\dot{\mathbf{X}}_i - \dot{\mathbf{X}}_{i-1}).
    \end{equation*}
    Then, we can calculate as follows:
    \begin{eqnarray}
        \dot{L} &=& \sum_{i=1}^N \dot{r}_i = \sum_{i=1}^N\mathbf{t}_{i}\cdot(\dot{\mathbf{X}}_i - \dot{\mathbf{X}}_{i-1}) = \sum_{i=1}^N(\mathbf{t}_i - \mathbf{t}_{i+1})\cdot\dot{\mathbf{X}}_i \\
        &=& \sum_{i=1}^N\{(V^+_i\sin_i + V^-_i\sin_i + W_i\cos_i)-(-V^+_i\sin_i-V^-_i\sin_i + W_i\cos_i)\}\label{eq:14} \\
        &=& \sum_{i=1}^N 2(V^+_i+V^-_i)\sin_i = \sum_{i=1}^N 2\cdot\frac{v^+_i+v^+_{i+1} + v^-_i + v^-_{i+1}}{2\cos_i} \cdot \sin_i \\
        &=& \sum_{i=1}^N (v^+_i + v^+_{i+1} + v^-_i + v^-_{i+1})\tan_i = \sum_{i=1}^N (v^+_i + v^-_{i})(\tan_i + \tan_{i-1}) \\
        &=& \sum_{i=1}^N \frac{\tan_i + \tan_{i-1}}{r_i}(v^+_i + v^-_i)r_i = \sum_{i=1}^N \kappa_i(v^+_i + v^-_i)r_i.
    \end{eqnarray}

    Here, to get $\eqref{eq:14}$, we have used $\eqref{eq:lem2_first_formula}$ and $\eqref{eq:lem2_second_formula}$.
    Next, let us confirm $\eqref{eq:prop1-2}$. We deduce
    \begin{eqnarray}
        \dot{A} &=& \sum_{i=1}^N\frac{r_i\mathbf{n}_i + r_{i+1}\mathbf{n}_{i+1}}{2}\cdot \dot{\mathbf{X}}_i \\
        &=& \sum_{i=1}^N \left(\frac{r_i}{2}(V^+_i\cos_i+V^-_i\cos_i - W_i\sin_i) + \frac{r_{i+1}}{2}(V^+_i\cos_i + V^-_{i}\cos_i+ W_i\sin_i)\right)\label{eq:19} \\
        &=& \sum_{i=1}^N \left(\frac{r_i + r_{i+1}}{2}(V^+_i+V^-_i)\cos_i + W_i\sin_i\frac{r_{i+1}-r_i}{2}\right) \\
        &=& \sum_{i=1}^N \left(\frac{(r_i + r_{i+1})(v^+_i + v^+_{i+1} + v^-_i + v^-_{i+1})}{4} + W_i\sin_i\frac{r_{i+1} - r_i}{2}\right) \\
        &=& \sum_{i=1}^N \left(r_i(v^+_i + v^-_i) - \frac{(r_{i+1}-r_i)(v^+_{i+1}+v^-_{i+1} - v^+_i - v^-_i)}{4} + W_i\sin_i\frac{r_{i+1} - r_i}{2}\right) \\
        &=& \sum_{i=1}^N r_i(v^+_i + v^-_i) + \mbox{errA}.
    \end{eqnarray}
    We \Erase{gain}\Add{again} use $\eqref{eq:lem2_first_formula}$ and $\eqref{eq:lem2_second_formula}$ to obtain $\eqref{eq:19}$.
\end{proof}
\begin{proof}[Proof of Theorem 1]
    We follow the proof of Theorem 2 \cite{Sakakibara_Yazaki}.
    Considering the function $\mu\mapsto S_i((1-\mu)x_i^* + \mu x)$, for each $x\in\Gamma_i$, we have 
    \begin{equation}\label{ineq:thm1_4}
        |S_i(x) - S_i(x_i^*)| \leq \int_0^1 |\nabla S_i((1-\mu)x_i^* + \mu x)|d\mu\cdot |x - x_i^*|\leq \|\nabla S_i\|_{L^\infty(\Gamma_i)}\cdot\frac{L}{2N}.
    \end{equation}
    Take $R>0$ so large that $\Omega\subset\subset B_R$.
    Then, we get
    \begin{eqnarray}
        &&\int_{\Omega}|\nabla U^+|^2dx + \int_{B_R\backslash\overline{\Omega}}|\nabla {U}^-|^2dx \\ 
        &=& \sum_{i=1}^N \left(\int_{\Gamma_i} U^+\nabla U^+\cdot\mathbf{n}_i dS - \int_{\Gamma_i}{U}^-\nabla{U}^-\cdot\mathbf{n}_idS\right) + \int_{\partial B_R}{U}^-\nabla{U}^-\cdot\mathbf{n}dS \\
        &=& \sum_{i=1}^N \int_{\Gamma_i} S_i(x) dS + \int_{\partial B_R}{U}^-\nabla{U}^-\cdot\mathbf{n}dS.\label{ineq:thm1_1}
    \end{eqnarray}
    Here, $\mathbf{n}$ denotes the outer unit normal vector field on $\partial B_R$. On the other hand, we have
    \begin{eqnarray}
        \dot{L} &=& \sum_{i=1}^N \kappa_i(v^+_i + {v}^-_i)r_i = \sum_{i=1}^N \left(\int_{\Gamma_i} \kappa_iv^+_i dS + \int_{\Gamma_i} \kappa_i{v}^-_i dS\right) \\
        &=& \sum_{i=1}^N \left(-\int_{\Gamma_i} U^+(\mathbf{X}_i^*)\nabla U^+(\mathbf{X}_i^*)\cdot\mathbf{n}_i dS + \int_{\Gamma_i} {U}^-(\mathbf{X}_i^*)\nabla {U}^-(\mathbf{X}_i^*)\cdot\mathbf{n}_idS \right) \\
        &=& -\sum_{i=1}^N \int_{\Gamma_i} S_i(\mathbf{X}_i^*)dS. \label{ineq:thm1_2}
    \end{eqnarray}
    We estimate the integral part of $\partial B_R$.
    From Lemma \ref{lem:boundedness_nabla_1/x**2}, we obtain
    \begin{equation}\label{ineq:thm1_3}
        \left|\int_{\partial B_R}{U}^-\nabla{U}^-\cdot\mathbf{n}dS\right| \leq C_N\int_{\partial B_R}dS\cdot O\left(\frac{1}{R^2}\right) = C_N\cdot O\left(\frac{1}{R}\right)\ \ \mbox{as}\ \ R\longrightarrow\infty.
    \end{equation}
    Here, we have set $C_N := \sup_{\mathbb{R}^2\backslash(\Omega\cup\Gamma)}{|U^-|}$.
    The identities $\eqref{ineq:thm1_1}, \eqref{ineq:thm1_2}$ and the \Erase{inequality}\Add{inequalities} $\eqref{ineq:thm1_3},\eqref{ineq:thm1_4}$ yield
    \begin{eqnarray*}
        \dot{L} + \int_{\Omega}|\nabla U^+|^2dx + \int_{B_R\backslash\overline{\Omega}}|\nabla {U}^-|^2 dx &=& \left|\sum_{i=1}^N \int_{\Gamma_i}(S_i(x)-S_i(\mathbf{X}_i^*))dS + \int_{\partial B_R}{U}^-\nabla{U}^-\cdot \mathbf{n}dS\right| \\
        &\leq & \sum_{i=1}^N \|\nabla S_i\|_{L^\infty(\Gamma_i)}\cdot \frac{L}{2N}\int_{\Gamma_i}dS + C_N\cdot O\left(\frac{1}{R}\right) \\
        &\leq & \frac{C}{N} + C_N\cdot O\left(\frac{1}{R}\right)\ \ \mbox{as}\ \ R\rightarrow\infty.
    \end{eqnarray*}

    Here, $C > 0$ is determined in Lemma \ref{lem:estimate_S_i}. Since $d,\tilde{d} = O(\frac{1}{\sqrt{N}})$ and $\sum_{i=0}^N|Q_j| = O(N)$ (see Proposition \ref{thm:uniform_boundedness_charges}), we derive $C = O(N^3\log{\sqrt{N}})$.
    We have finished the proof.
\end{proof}
\begin{proof}[Proof of Theorem \ref{thm:discrete_L_formula}]
Noting that
\begin{equation*}
  \mathbf{X}^{n+1}_j = \mathbf{X}^n_j + \Delta t(V^n_j\mathbf{N}^n_j + W^n_j\mathbf{T}^n_j)
\end{equation*}
from the construction of the scheme, together with the Taylor expansion, we get
\begin{eqnarray*}
  && L^{n+1}-L^n = \sum_{j=1}^N(r^{n+1}_j - r^n_j) = \sum_{j=1}^N(|\mathbf{X}^{n+1}_j - \mathbf{X}^{n+1}_{j-1}| - |\mathbf{X}^n_j - \mathbf{X}^n_{j-1}|) \\
  &=& \sum_{j=1}^N\left(\frac{\mathbf{X}^n_j - \mathbf{X}^n_{j-1}}{|\mathbf{X}^n_j - \mathbf{X}^n_{j-1}|}\cdot \{\mathbf{X}^{n+1}_j - \mathbf{X}^{n+1}_{j-1} - (\mathbf{X}^n_j - \mathbf{X}^n_{j-1})\} + O(|\mathbf{X}^{n+1}_j - \mathbf{X}^{n+1}_{j-1} - (\mathbf{X}^n_j - \mathbf{X}^n_{j-1})|^2)\right) \\
  &=& \sum_{j=1}^N\left(\mathbf{t}^n_j\cdot\Delta t(V^n_j\mathbf{N}^n_j + W^n_j\mathbf{T}^n_j - V^n_{j-1}\mathbf{N}^n_{j-1} - W^n_{j-1}\mathbf{T}^n_{j-1}) + \Delta t^2O(|\mathbf{V}^n_j|^2)\right) \\
  &=& \sum_{j=1}^N\left(\Delta t(V^n_j\sin^n_j + V^n_{j-1}\sin^n_{j-1} + W^n_j\cos^n_j - W^n_{j-1}\cos^n_{j-1}) + \Delta t^2O(|\mathbf{V}^n_j|^2)\right).
\end{eqnarray*}
Here, we have utilized the abbreviation $\mathbf{V}^n_j := V^n_j\mathbf{N}^n_j + W^n_j\mathbf{T}^n_j - V^n_{j-1}\mathbf{N}^n_{j-1} - W^n_{j-1}\mathbf{T}^n_{j-1}$ for the remaining term of the Taylor expansion. 
To obtain the fifth equality, one should note that
\begin{equation*}
  \mathbf{t}^n_j\cdot\mathbf{n}^n_{j-1} = \cos{\left(\varphi^n_j-\frac{\pi}{2}\right)}, \mathbf{t}^n_j\cdot\mathbf{n}^n_{j-1} = \cos{\left(\varphi^n_{j-1} + \frac{\pi}{2}\right)}, \mathbf{t}^n_j\cdot\mathbf{t}^n_{j+1} = \cos{\varphi^n_j},\mathbf{t}^n_j\cdot\mathbf{t}^n_{j-1} = \cos{\varphi^n_{j-1}}.
\end{equation*}
Owing to the UDM, it is clear that
\begin{equation}\label{eq:UDM_formula}
  V^n_j\sin^n_j + V^n_{j-1}\sin^n_{j-1} + W^n_j\cos^n_j - W^n_{j-1}\cos^n_{j-1} = \frac{1}{N}\sum_{j=1}^N \kappa^n_jv^n_jr^n_j - \left(r^n_j - \frac{L^n}{N}\right)\cdot 10N
\end{equation}
for each $1\leq j\leq N$. Thus, we have
\begin{eqnarray*}
  L^{n+1} - L^n &=& \Delta t\left(\sum_{j=1}^N\kappa^n_jv^n_jr^n_j - 10N\sum_{j=1}^N\left(r^n_j - \frac{L^n}{N}\right)\right) + \Delta t^2\sum_{j=1}^NO(|\mathbf{V}^n_j|^2) \\
  &=& \Delta t\cdot\sum_{j=1}^N\kappa^n_jv^n_jr^n_j - 10NL^n + 10NL^n +  \Delta t^2\sum_{j=1}^NO(|\mathbf{V}^n_j|^2) \\
  &=& \Delta t\cdot\sum_{j=1}^N\kappa^n_jv^n_jr^n_j + \Delta t^2\sum_{j=1}^NO(|\mathbf{V}^n_j|^2).
\end{eqnarray*}
Second, it is necessary to estimate the term $O(|\mathbf{V}^n_j|^2)$ by means of $N$.
To this end, recall the way to derive the explicit formulae of $W^n_j$ from $\eqref{eq:W_i_explicit_formula}$.
For each $1\leq j\leq N$, $\psi^n_j$ can be estimated as
\begin{equation*}
  \psi^n_j = \frac{1}{N}\sum_{j=1}^N\kappa^n_jv^n_jr^n_j + O(\sqrt{N})\ \ \mbox{as}\ \ N\rightarrow\infty,
\end{equation*}
where the definition of $\psi^n_j$ is
\begin{equation*}
  \psi^n_j := \begin{cases}
    0\ \ \mbox{if}\ \ j = 1, \\
    \frac{1}{N}\sum_{j=1}^N\kappa^n_jv^n_jr^n_j -V^n_{j-1}\sin^n_{j-1} - V^n_j\sin^n_j + \left(\frac{L^n}{N} - r^n_j\right)\omega\ \ \mbox{otherwise}.
  \end{cases}
\end{equation*}
It is easily seen that $\sin^n_j = O(\frac{1}{N})$ as $N\rightarrow\infty$ from $\sin^n_j \simeq \frac{\varphi^n_j}{2}$ for a sufficiently large $N$ and
\begin{equation*}
  O(1) = \hat{\kappa}^n_j = \frac{\varphi^n_j}{\frac{r^n_j + r^n_{j+1}}{2}} = \frac{\varphi^n_j}{O(\frac{1}{N})}.
\end{equation*}
Moreover, we see $V^n_j = O(N^{\frac{3}{2}})$ from Lemma \ref{lem:estimate_H_G} as follows:
\begin{eqnarray*}
V^n_j &=& \Add{\frac{v^n_j + v^n_{j+1}}{2\cos^n_j} = \frac{-\nabla U^+({\mathbf{X}^n_j}^*)\cdot\mathbf{n}^n_j -\nabla U^+({\mathbf{X}^n_{j-1}}^*)\cdot\mathbf{n}^n_{j-1}}{2\cos^n_j}} \\
&=& \Add{\frac{-\sum_{k=1}^N Q^+_k\mathbf{H}^n_{j,k}\cdot\mathbf{n}^n_j-\sum_{k=1}^N Q^+_k\mathbf{H}^n_{j-1,k}\cdot\mathbf{n}^n_{j-1}}{2\cos^n_j} = \sum_{k=1}^N \frac{O(\sqrt{N})}{O(1)} = O(N^{\frac{3}{2}}).}
\end{eqnarray*}
Here, we have used a trivial estimate \Add{$\cos^n_j = O(1)$} because it tends to zero as $N\rightarrow\infty$.
Furthermore, we obtain the order of the constant \Erase{$C^i$}\Add{$C^n$} that appears in $\eqref{eq:W_i_explicit_formula}$
for $N$. The order of the quantity \Erase{$\Psi^i_j$}\Add{$\Psi^n_j$} is calculated as
\begin{eqnarray*}
  \sum_{j=1}^N\Psi^n_j &=&  \sum_{j=1}^N\sum_{k=1}^j\psi^n_k = \sum_{j=1}^N\sum_{k=1}^j\left(\frac{1}{N}\sum_{j=1}^N\kappa^n_jv^n_jr^n_j + O(\sqrt{N}) + O\left(\frac{1}{N}\right)\right) \\
  &=& \frac{N(N+1)}{2}\left(\frac{1}{N}\sum_{j=1}^N\kappa^n_jv^n_jr^n_j + O(\sqrt{N}) + O\left(\frac{1}{N}\right)\right) = O(N)\sum_{j=1}^N\kappa^n_jv^n_jr^n_j + O(N^{\frac{5}{2}}).
\end{eqnarray*}
Thus, we get
\begin{equation*}
  C^n = -\frac{\sum_{j=1}^N\frac{\Psi^n_j}{\cos^n_j}}{\sum_{j=1}^N\frac{1}{\cos^n_j}} = \frac{O(N)\sum_{j=1}^N\kappa^n_jv^n_jr^n_j + O(N^{\frac{5}{2}})}{O(N)} = O(1)\sum_{j=1}^N\kappa^n_jv^n_jr^n_j + O(N^{\frac{3}{2}}).
\end{equation*}
Since $\Psi^n_j$ has a lower order than the quantity above, we deduce that
\begin{equation*}
  W^n_j = \frac{\Psi^n_j + C^n}{\cos^n_j} = O(1)\sum_{j=1}^N\kappa^n_jv^n_jr^n_j + O(N^{\frac{3}{2}}). 
\end{equation*}
Let us return to the discussion on the quantity $O(|\mathbf{V}^n_j|^2)$.
A direct calculation shows 
\begin{eqnarray*}
  |\mathbf{V}^n_j|^2 &=& {V^n_j}^2 + {W^n_j}^2 + {V^n_{j-1}}^2 + {W^n_{j-1}}^2  + 2V^n_jW^n_j\mathbf{N}^n_j\cdot\mathbf{T}^n_j - 2V^n_jV^n_{j-1}\mathbf{N}^n_j\cdot\mathbf{N}^n_{j-1} -2V^n_jW^n_{j-1}\mathbf{N}^n_j\cdot\mathbf{T}^n_{j-1} \\
  &&-2W^n_jV^n_{j-1}\mathbf{T}^n_j\cdot\mathbf{N}^n_{j-1} \Add{- 2W^n_jW^n_{j-1}\mathbf{T}^n_j\cdot\mathbf{T}^n_{j-1}} + 2V^n_{j-1}W^n_{j-1}\mathbf{N}^n_{j-1}\cdot\mathbf{T}^n_{j-1} \\
  &=& {V^n_j}^2 + {W^n_j}^2 + {V^n_{j-1}}^2 + {W^n_{j-1}}^2 -2V^n_jV^n_{j-1}\cos{\left(\frac{\varphi^n_{j-1} + \varphi^n_j}{2}\right)}  \\
  &&-2V^n_jW^n_{j-1}\cos{\left(\frac{\varphi^n_{j-1} + \varphi^n_j}{2} - \frac{\pi}{2}\right)} - 2W^n_jV^n_{j-1}\cos{\left(\frac{\varphi^n_{j-1} + \varphi^n_j}{2} + \frac{\pi}{2}\right)} \\
  &&\Add{- 2W^n_jW^n_{j-1}\cos{\left(\frac{\varphi^n_{j-1} + \varphi^n_j}{2}\right)}}\\
  &=& {V^n_j}^2 + {W^n_j}^2 + {V^n_{j-1}}^2 + {W^n_{j-1}}^2 -2V^n_jV^n_{j-1}\cos{\left(\frac{\varphi^n_{j-1} + \varphi^n_j}{2}\right)}  \\
  &&-2V^n_jW^n_{j-1}\sin{\left(\frac{\varphi^n_{j-1} + \varphi^n_j}{2}\right)} + 2W^n_jV^n_{j-1}\sin{\left(\frac{\varphi^n_{j-1} + \varphi^n_j}{2}\right)} \Add{-2W^n_jW^n_{j-1}\cos{\left(\frac{\varphi^n_{j-1} + \varphi^n_j}{2}\right)}}.
\end{eqnarray*}
Square both sides of the identity $\eqref{eq:UDM_formula}$ to get
\begin{multline}\label{eq:square_UDM_fromula}
  {V^n_j}^2{\sin^n_j}^2 + {V^n_{j-1}}^2{\sin^n_{j-1}}^2 + {W^n_j}^2{\cos^n_j}^2 + {W^n_{j-1}}^2{\cos^n_{j-1}}^2 \\
  + 2V^n_jV^n_{j-1}\sin^n_j\sin^n_{j-1} + 2V^n_jW^n_j\sin^n_j\cos^n_j \Add{- 2V^n_jW^n_{j-1}\sin^n_j\cos^n_{j-1}} \\
  + 2V^n_{j-1}W^n_j\sin^n_{j-1}\cos^n_j - 2V^n_{j-1}W^n_{j-1}\sin^n_{j-1}\cos^n_{j-1} - 2W^n_jW^n_{j-1}\cos^n_j\cos^n_{j-1} \\
  = \left(\frac{1}{N}\sum_{j=1}^N\kappa^n_jv^n_jr^n_j + O(1)\right)^2.
\end{multline}
Adding a proper quantity to both sides of $\eqref{eq:square_UDM_fromula}$ shows that
\begin{multline}\label{eq:estimate_of_diff_between_velocity}
  |\mathbf{V}^n_j|^2 = \left(\frac{1}{N}\sum_{j=1}^N\kappa^n_jv^n_jr^n_j + O(1)\right)^2 + {V^n_j}^2{\cos^n_j}^2 + {V^n_{j-1}}^2{\cos^n_{j-1}}^2 + {W^n_j}^2{\sin^n_j}^2 + {W^n_{j-1}}^2{\sin^n_{j-1}}^2 \\
  - 2V^n_jV^n_{j-1}\cos^n_j\cos^n_{j-1} \Add{- 2V^n_jW^n_{j-1}\sin^n_{j-1}\cos^n_j} + 2V^n_{j-1}W^n_j\sin^n_j\cos^n_{j-1} \\ 
  + 2W^n_jW^n_{j-1}\sin^n_j\sin^n_{j-1} - 2V^n_jW^n_j\sin^n_j\cos^n_j + 2V^n_{j-1}W^n_{j-1}\sin^n_{j-1}\cos^n_{j-1} \\
  = O\left(\frac{1}{N^2}\right)\left(\sum_{j=1}^N\kappa^n_jv^n_jr^n_j\right)^2 + O(\sqrt{N})\sum_{j=1}^N\kappa^n_jv^n_jr^n_j + O(N^3).
\end{multline}
Summing up through $1\leq j\leq N$ in $\eqref{eq:estimate_of_diff_between_velocity}$, we obtain the desired estimate.
\end{proof}
\bibliography{citation_ms_na_thesis}
\bibliographystyle{plain}
\end{document}